\date{}
\def\prtor{\qopname\relax o{pr}_{\mathbb T^2}}
\theoremstyle{theorem}
\newtheorem{theo}{Theorem}[section]
\newtheorem{lemm}{Lemma}[section]
\newtheorem{prop}{Proposition}[section]
\theoremstyle{remark}
\newtheorem{exam}{Example}[section]
\theoremstyle{definition}
\newtheorem{defi}{Definition}[section]
\numberwithin{equation}{section}
\numberwithin{figure}{section}
\author {Ivan Dynnikov and Maxim Prasolov}
\address{\noindent V.A. Steklov Mathematical Institute of Russian Academy of Science, 8 Gubkina Str., Moscow 119991, Russia}
\address{\noindent St.\ Petersburg State University, Line 14th (Vasilyevsky Island), 29, Saint Petersburg, 199178, Russia}
\email{dynnikov@mech.math.msu.su}
\address{Department of Mechanics and Mathematics of Moscow State University, 1 Leninskije gory, Moscow 119991, Russia}
\address{\noindent St.\ Petersburg State University, Line 14th (Vasilyevsky Island), 29, Saint Petersburg, 199178, Russia}
\email{0x00002a@gmail.com}
\thanks{The work is supported by the Russian Science Foundation under grant~19-11-00151.}
\title{Rectangular diagrams of surfaces: the basic moves}
\begin{document}
\maketitle

\begin{abstract}
In earlier papers we introduced a 
representation of isotopy classes of compact surfaces embedded in the three-sphere~$\mathbb S^3$
by so called rectangular diagrams.
The formalism proved useful for comparing Legendrian knots.
The aim of this paper is to prove a Reidemeister type theorem for rectangular diagrams of surfaces.
\end{abstract}

\section{Introduction}
We work in the piecewise smooth category. Unless otherwise specified, all homeomorphisms
and isotopies are assumed to be piecewise smooth. 

Throughout the paper, \emph{a surface}~$F\subset\mathbb S^3$
means a compact smooth surface with corners embedded in the three-sphere~$\mathbb S^3$.
`With corners' means that the boundary of~$F$ is a union of cusp-free piecewise smooth curves,
and that~$F$ can be
extended beyond the boundary to become a smooth submanifold of~$\mathbb S^3$ with smooth boundary.
Surfaces are not assumed to be orientable or to have a non-empty boundary.

If the tangent plane~$T_pF$ to a surface~$F\subset\mathbb S^3$
at a point~$p\in F$
is said to be preserved by a self-homeomorphism~$\phi$ of~$\mathbb S^3$ this means
that~$\phi(p)=p$ and~$\phi(F)$ has a well defined tangent plane at~$p$ coinciding with~$T_pF$.

If~$F_1,F_2\subset\mathbb S^3$ are two surfaces, then
by \emph{a morphism from~$F_1$ to~$F_2$} we mean a connected component of
orientation preserving self-homeomorphisms~$\phi$ of~$\mathbb S^3$ such that~$\phi(F_1)=F_2$.
The morphism represented by a homeomorphism~$\phi$ will be denoted by~$[\phi]$.

In~\cite{dp17} we introduced rectangular diagrams of surfaces (see definitions below), and with every rectangular diagram of a surface~$\Pi$
we associated a surface~$\widehat\Pi\subset\mathbb S^3$. We also showed~\cite[Theorem~1]{dp17}
that every isotopy class of surfaces can be represented by a rectangular diagram of a surface.

In~\cite{distinguishing} we defined basic moves for rectangular diagrams of surfaces,
which are transformations of rectangular diagrams such that the associated surface
is transformed by an isotopy.
So, every basic move~$\Pi\mapsto\Pi'$ comes with a well defined morphism from~$\widehat\Pi$ to~$\widehat\Pi'$.

The main result of the present paper (which is announced in~\cite{distinguishing}
in a slightly weaker form) is Theorem~\ref{main-theo}, which states that
any morphism between surfaces represented by rectangular diagrams can be decomposed
into basic moves, and if the given morphism preserves a sublink of the surface boundary, then so does
each basic move in the decomposition.

The paper is organized as follows. In Section~\ref{prelim-sec} we introduce
rectangular diagrams, their moves, and formulate the main result of the paper.
In Sections~\ref{bubble-sec}--\ref{movie-moves-sec}
we introduce auxiliary tools and prove intermediate results.
Sections~\ref{fixed-b-sec} and~\ref{boundary-sec} contain the proof of the main result.

\section{Preliminaries. Rectangular diagrams and basic moves}\label{prelim-sec}

We recall some definitions and notation from~\cite{dp17,distinguishing}.

For two distinct points~$x,y$ of the circle~$S^1$ we denote by~$[x;y]$ a unique arc of~$\mathbb S^1$ such that,
with respect to the standard orientation of~$\mathbb S^1$, it has starting point~$x$,
and end point~$y$.

By~$\mathbb T^2$ we denote the two-torus~$\mathbb S^1\times\mathbb S^1$ and by~$\theta$ and~$\varphi$
the angular coordinates on the first and the second $\mathbb S^1$-factor, respectively.

We identify the three-sphere~$\mathbb S^3$ with the join of two circles:
$\mathbb S^3=\mathbb S^1\times\mathbb S^1\times[0;1]/{\sim}$, where~$\sim$ stands
for the following equivalence relation:
$$(\theta,\varphi,0)\sim(\theta',\varphi,0),\
(\theta,\varphi,1)\sim(\theta,\varphi',1)\quad\forall\,\theta,\theta',\varphi,\varphi'\in\mathbb S^1,$$
and use~$\theta,\varphi,\tau$ for the corresponding coordinate system. 

\emph{The torus projection} $\prtor$ is defined as the following map from~$\mathbb S^3\setminus\bigl(\mathbb S^1_{\tau=0}\cup
\mathbb S^1_{\tau=1}\bigr)$ to~$\mathbb T^2$:
$$\prtor(\theta,\varphi,\tau)=(\theta,\varphi).$$
For a point~$v\in\mathbb T^2$, we denote by~$\widehat v$ the closed arc
$$\widehat v=\overline{\prtor^{-1}(v)}.$$
For a finite subset~$X\subset\mathbb T^2$, we define~$\widehat X$ to be the union
$$\widehat X=\bigcup\limits_{v\in X}\widehat v.$$

For~$\theta,\varphi\in\mathbb S^1$ we also denote by $m_\theta$
the \emph{meridian}~$\{\theta\}\times\mathbb S^1\subset\mathbb T^2$, and by~$\ell_\varphi$
the \emph{longitude}~$\mathbb S^1\times\{\varphi\}$.
By $\widehat m_\theta$ and $\widehat\ell_\varphi$ we denote the endpoints
of the arc~$\widehat{(\theta,\varphi)}$, lying on~$\mathbb S^1_{\tau=1}$
and~$\mathbb S^1_{\tau=0}$, respectively, that is,
$$\widehat m_\theta=(\theta,*,1)/{\sim}\in\mathbb S^1_{\tau=1},\quad
\widehat\ell_\varphi=(*,\varphi,0)/{\sim}\in\mathbb S^1_{\tau=0}.$$

By a \emph{rectangle} we mean a subset~$r\subset\mathbb T^2$ of the form~$[\theta_1;\theta_2]\times[\varphi_1;\varphi_2]$.
By~$V(r)$ we denote the set of vertices of~$r$: $V(r)=\{\theta_1,\theta_2\}\times\{\varphi_1,\varphi_2\}$. We also set
$$V_\diagup(r)=\{(\theta_1,\varphi_2),(\theta_2,\varphi_1)\},\quad
V_\diagdown(r)=\{(\theta_1,\varphi_1),(\theta_2,\varphi_2)\}.$$

Two rectangles $r_1$, $r_2$ are said to be \emph{compatible}
if their intersection satisfies one of the following:
\begin{enumerate}
\item $r_1\cap r_2$ is empty;
\item $r_1\cap r_2$ is a subset of vertices of $r_1$ (equivalently: of~$r_2$);
\item $r_1\cap r_2$ is a rectangle disjoint from the vertices of both rectangles $r_1$ and $r_2$.
\end{enumerate}

\begin{defi}\label{rect-diagr-def}
\emph{A rectangular diagram of a surface} is a collection $\Pi=\{r_1,\ldots,r_k\}$
of pairwise compatible rectangles in~$\mathbb T^2$ such
that every meridian $\{\theta\}\times\mathbb S^1$ and every longitude $\mathbb S^1\times\{\varphi\}$
of the torus contains at most two free vertices, where by \emph{a free vertex}
we mean a point that is a vertex of exactly one rectangle in~$\Pi$.

The set of all free vertices of $\Pi$ is called \emph{the boundary of $\Pi$} and
denoted by $\partial\Pi$.

All elements of the union~$\bigcup_{r\in\Pi}V(r)$ are called \emph{vertices} of~$\Pi$.
The elements of the union~$\bigcup_{r\in\Pi}V_\diagup(r)$ (respectively, of~$\bigcup_{r\in\Pi}V_\diagdown(r)$)
are called \emph{$\diagup$-vertices} (respectively, \emph{$\diagdown$-vertices}) of~$\Pi$,
and said to be \emph{of type~$\diagup$} (respectively, \emph{of type~$\diagdown$}).

All meridians
and longitudes of~$\mathbb T^2$ passing through vertices of~$\Pi$ are called
\emph{occupied levels} of~$\Pi$.
\end{defi}

With every rectangle~$r\in\mathbb T^2$ one can associate a surface~$\widehat r$ homeomorphic to a two-disc
so that the following holds:
\begin{enumerate}
\item
for any rectangle~$r\subset\mathbb T^2$, the torus projection~$\prtor$ takes the interior of~$\widehat r$
to the interior of~$r$ homeomorphically;
\item
$\partial\widehat r=\widehat{V(r)}$;
\item
whenever~$r_1$ and~$r_2$ are compatible rectangles, the interiors of~$\widehat r_1$ and~$\widehat r_2$
are disjoint;
\item
whenever~$r_1$ and~$r_2$ are compatible and share a vertex, the union~$\widehat r_1\cup\widehat r_2$
is a surface (that is to say, there are no singularities at~$\widehat r_1\cap\widehat r_2$).
\end{enumerate}

In~\cite{dp17}, the discs~$\widehat r$ satisfying these properties are defined explicitly, but
any other choice will work as well. The choice is supposed to be fixed from now on.

\begin{defi}
Let~$\Pi$ be a rectangular diagram of a surface. \emph{The surface~$\widehat\Pi$ associated with~$\Pi$}
is defined as
$$\widehat\Pi=\bigcup\limits_{r\in\Pi}\widehat r.$$
\end{defi}

One can see that the boundary~$\partial\Pi$ of a rectangular diagram of a surface is a rectangular
diagram of a link in the sense of~\cite{dyn06,dp17}, and we have
$$\widehat{\partial\Pi}=\partial\widehat\Pi.$$

By \emph{a basic move} of rectangular diagrams of surfaces we mean
any of the transformations defined below in this section. These include: (half-)wrinkle creation and reduction moves, (de)stabilization
moves, exchange moves, and flypes.

`Transformations' means pairs~$(\Pi,\Pi')$ of diagrams endowed with a morphism~$\widehat\Pi\rightarrow\widehat\Pi'$.
In each definition of a basic move~$\Pi\mapsto\Pi'$ defined below, the description
of the pair~$(\Pi,\Pi')$ naturally suggests what the morphism~$\widehat\Pi\rightarrow\widehat\Pi'$ should be.
So, we omit the description of this morphism in Definitions~\ref{wrinkledef}--\ref{flype-def}
and provide the necessary hints afterwards.

\begin{defi}\label{wrinkledef}
Let $\Pi$ be a rectangular diagram of a surface, and let $v_1=(\theta_0,\varphi_1)$ and $v_2=(\theta_0,\varphi_2)$
be a $\diagdown$-vertex and a $\diagup$-vertex of $\Pi$, respectively, lying on the same meridian $m_{\theta_0}$.
Choose an $\varepsilon>0$ so that no meridian in $[\theta_0-2\varepsilon;\theta_0+2\varepsilon]\times
\mathbb S^1\subset\mathbb T^2$  other than $m_{\theta_0}$ is an occupied level of $\Pi$.
Also choose an orientation-preserving self-homeomorphism~$\psi$ of the interval~$[\theta_0-2\varepsilon;\theta_0+2\varepsilon]$.

Let $\Pi'$ be the rectangular diagram of a surface obtained from $\Pi$ by making the following modifications:
\begin{enumerate}
\item
every rectangle of the form $[\theta_0;\theta_1]\times[\varphi';\varphi'']$ (respectively, $[\theta_1;\theta_0]\times[\varphi';\varphi'']$)
with $[\varphi';\varphi'']\subset[\varphi_1;\varphi_2]$ is replaced by
$[\psi(\theta_0+\varepsilon);\theta_1]\times[\varphi';\varphi'']$ (respectively, by $[\theta_1;\psi(\theta_0+\varepsilon)]\times[\varphi';\varphi'']$);
\item
every rectangle of the form $[\theta_0;\theta_1]\times[\varphi';\varphi'']$ (respectively, $[\theta_1;\theta_0]\times[\varphi';\varphi'']$)
with $[\varphi';\varphi'']\subset[\varphi_2;\varphi_1]$ is replaced by
$[\psi(\theta_0-\varepsilon);\theta_1]\times[\varphi';\varphi'']$ (respectively, by $[\theta_1;\psi(\theta_0-\varepsilon)]\times[\varphi';\varphi'']$);
\item
the following two new rectangles are added: $[\psi(\theta_0-\varepsilon);\psi(\theta_0)]\times[\varphi_1;\varphi_2]$ and
$[\psi(\theta_0);\psi(\theta_0+\varepsilon)]\times[\varphi_2;\varphi_1]$.
\end{enumerate}
Then we say that the passage from $\Pi$ to $\Pi'$ is \emph{a vertical wrinkle creation move}.
The inverse operation is referred to as \emph{a vertical wrinkle reduction move}.

\emph{Horizontal wrinkle creation} and \emph{reduction moves} are defined similarly with the roles of $\theta$ and $\varphi$
exchanged.
\end{defi}

A vertical wrinkle move is illustrated in Figure~\ref{wrinklemovefig}. The left pair of pictures shows how the rectangular
diagram changes,
\begin{figure}[ht]
\centerline{\includegraphics{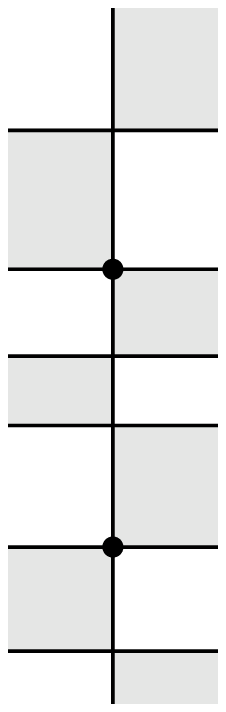}\put(-68,0){$\Pi$}
\put(-37,47){$v_1$}\put(-37,140){$v_2$}%
\put(-45,0){$m_{\theta_0}$}\put(-85,53){$\ell_{\varphi_1}$}\put(-85,133){$\ell_{\varphi_2}$}%
\raisebox{3.5cm}{$\longleftrightarrow$}\includegraphics{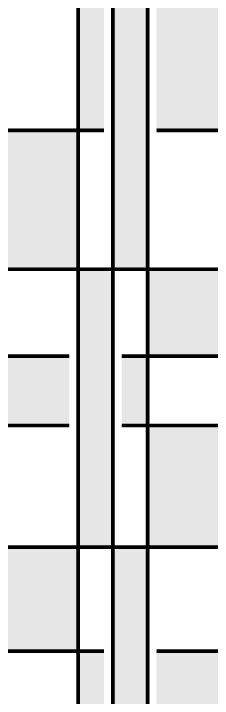}\put(-68,0){$\Pi'$}
\raisebox{1.5cm}{\includegraphics{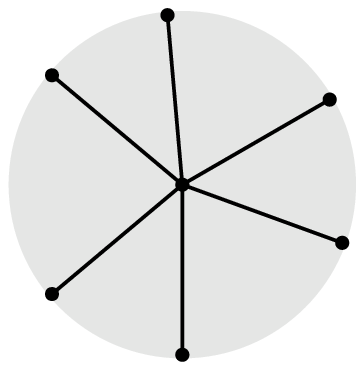}\put(-58,30){$\widehat v_1$}\put(-60,85){$\widehat v_2$}\put(-80,60){$\widehat m_{\theta_0}$}
\raisebox{2cm}{$\longleftrightarrow$}\includegraphics{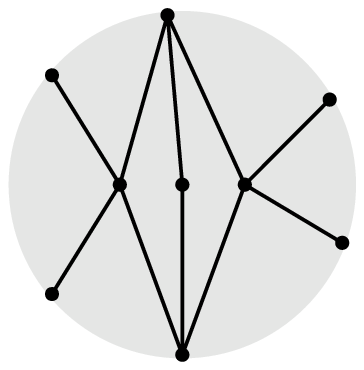}}}
\caption{A vertical wrinkle move}\label{wrinklemovefig}
\end{figure}
and the right pair of pictures shows the change in the corresponding tiling of $\widehat\Pi$.

\begin{defi}\label{half-wrinkle-def}
Let $\Pi$, $v_1$, $v_2$, and~$\psi$ be as in Definition~\ref{wrinkledef} and suppose
additionally that we have $v_1,v_2\in\partial\Pi$.
Let $\Pi'$ be obtained from $\Pi$ as described in Definition~\ref{wrinkledef} with the following one distinction:
\begin{itemize}
\item
if $\Pi$ has no rectangle of the form $[\theta_0;\theta_1]\times[\varphi';\varphi'']$ or
$[\theta_1;\theta_0]\times[\varphi';\varphi'']$ with $[\varphi';\varphi'']\subset[\varphi_1;\varphi_2]$,
we do not add the rectangle $[\psi(\theta_0);\psi(\theta_0+\varepsilon)]\times[\varphi_2;\varphi_1]$;
\item
if $\Pi$ has no rectangle of the form $[\theta_0;\theta_1]\times[\varphi';\varphi'']$ or
$[\theta_1;\theta_0]\times[\varphi';\varphi'']$ with $[\varphi';\varphi'']\subset[\varphi_2;\varphi_1]$,
we do not add the rectangle~$[\psi(\theta_0-\varepsilon);\psi(\theta_0)]\times[\varphi_1;\varphi_2]$.
\end{itemize}
One of these two cases must occur.

Then we say that the passage from $\Pi$ to $\Pi'$ is \emph{a vertical half-wrinkle creation move}, and the inverse operation is \emph{a vertical half-wrinkle reduction move}.

\emph{Horizontal half-wrinkle moves} are defined similarly with the roles of $\theta$ and $\varphi$
exchanged.
\end{defi}

A vertical half-wrinkle move is illustrated in Figure~\ref{halfwrinklemovefig}.
\begin{figure}[ht]
\centerline{
\includegraphics{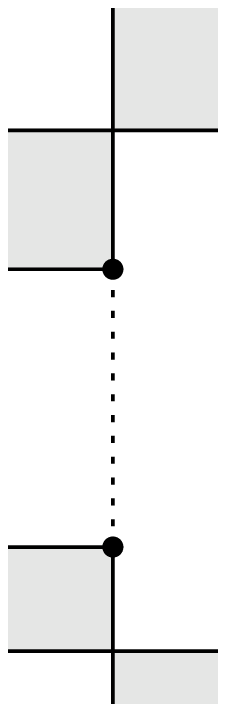}\put(-68,0){$\Pi$}\put(-45,0){$m_{\theta_0}$}\put(-37,60){$v_1$}\put(-37,127){$v_2$}%
\raisebox{3.5cm}{$\longleftrightarrow$}\includegraphics{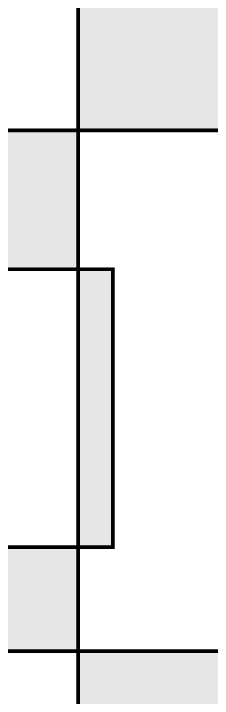}\put(-68,0){$\Pi'$}
\raisebox{1.5cm}{\includegraphics{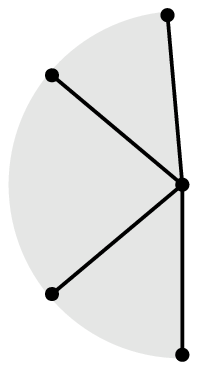}\put(-8,30){$\widehat v_1$}\put(-10,85){$\widehat v_2$}\put(-7,52){$\widehat m_{\theta_0}$}
\hspace{0.2cm}
\raisebox{2cm}{$\longleftrightarrow$}
\includegraphics{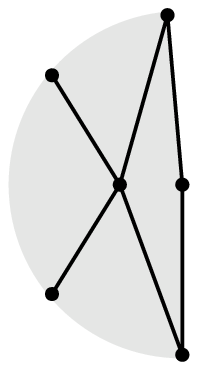}}
}
\caption{A vertical half-wrinkle move}\label{halfwrinklemovefig}
\end{figure}
In the case $v_1,v_2\in\partial\Pi$ the respective wrinkle creation move can be decomposed into two half-wrinkle creation moves, which justifies the term `half-wrinkle'.

\begin{defi}\label{stab-move-def}
Let~$\Pi$, $v_1$, $v_2$, and~$\psi$ be as in Definition~\ref{wrinkledef} except that $v_1\in m_{\theta_0}$ is not a vertex of $\Pi$ and,
moreover, $v_1$ does not belong to any rectangle and any occupied longitude of~$\Pi$. Let $\Pi'$ be obtained from $\Pi$
by exactly the same modification as the one described in Definition~\ref{wrinkledef}, and let~$T$ be
a transformation of rectangular diagrams induced by any of the following symmetries or any decomposition of them (including the identity
transformation):
$$(\theta,\varphi)\mapsto(\varphi,\theta),\quad
(\theta,\varphi)\mapsto(-\theta,\varphi).$$
Then the passage
from~$T(\Pi)$ to~$T(\Pi')$ is called \emph{a stabilization move} and the inverse one
\emph{a destabilization move}.

\end{defi}

An example of a (de)stabilization move is shown in Figure~\ref{boundarystabmovefig}.
\begin{figure}[ht]
\centerline{\includegraphics{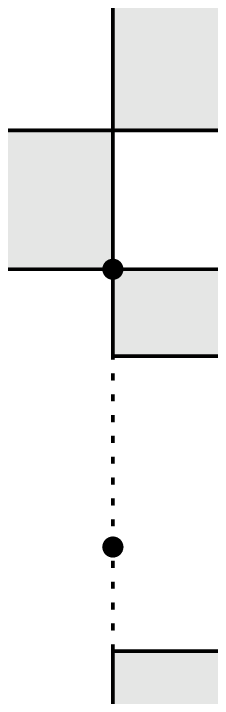}\put(-68,0){$\Pi$}\put(-45,0){$m_{\theta_0}$}\put(-37,47){$v_1$}\put(-37,140){$v_2$}%
\raisebox{3.5cm}{$\longleftrightarrow$}\includegraphics{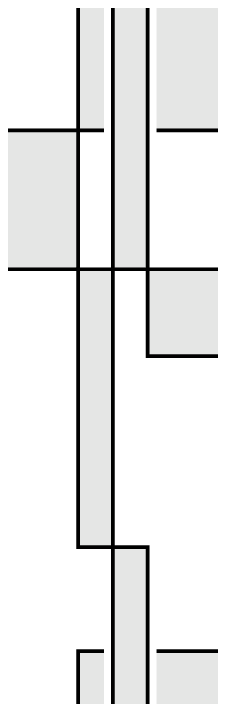}\put(-68,0){$\Pi'$}
\raisebox{1.5cm}{
\includegraphics{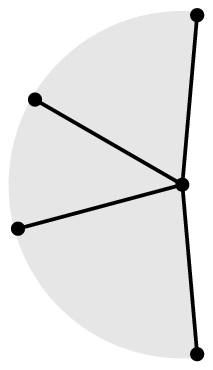}\put(-35,75){$\widehat v_2$}\put(-10,54){$\widehat m_{\theta_0}$}
\hspace{0.1cm}
\raisebox{2cm}{$\longleftrightarrow$}\includegraphics{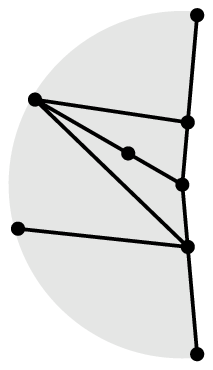}
}}
\caption{A stabilization/destabilization moves}\label{boundarystabmovefig}
\end{figure}

Note also that if~$\Pi\mapsto\Pi'$ is a 
stabilization of a rectangular diagram of a surface,
then~$\partial\Pi\mapsto\partial\Pi'$ is a stabilization of a rectangular diagram of a link
(in the generalized sense of~\cite{dyn06}).

\begin{defi}\label{exchange-def}
Let $\Pi$ be a rectangular diagram of a surface, and let $\theta_1,\theta_2,\theta_3,\varphi_1,\varphi_2\in\mathbb S^1$ be such
that:
\begin{enumerate}
\item
we have $\theta_2\in(\theta_1;\theta_3)$;
\item
the rectangles $r_1=[\theta_1;\theta_2]\times[\varphi_1;\varphi_2]$ and $r_2=[\theta_2;\theta_3]\times[\varphi_2;\varphi_1]$
contain no vertices of $\Pi$;
\item
the vertices of $r_1$ and $r_2$ are disjoint from the rectangles of~$\Pi$.
\end{enumerate}
Let $f:\mathbb S^1\rightarrow\mathbb S^1$ be a map that is identical on $[\theta_3;\theta_1]$, and exchanges
the intervals $(\theta_1;\theta_2)$ and $(\theta_2;\theta_3)$:
$$f(\theta)=\left\{\begin{aligned}\theta-\theta_2+\theta_3,&\text{ if }\theta\in(\theta_1,\theta_2],\\
\theta-\theta_2+\theta_1,&\text{ if }\theta\in(\theta_2,\theta_3).
\end{aligned}\right.$$
Choose a self-homeomorphism~$\psi$ of~$\mathbb S^1$ identical on~$[\theta_3;\theta_1]$, and let
$$\Pi'=\bigl\{[\\\psi(f(\theta'));\psi(f(\theta''))]\times[\varphi';\varphi'']:[\theta';\theta'']\times[\varphi';\varphi'']\in\Pi\bigr\}.$$
Then we say that the passage from $\Pi$ to $\Pi'$, or the other way, is \emph{a vertical exchange move}.

\emph{A horizontal exchange move} is defined similarly with the roles of $\theta$ and $\varphi$ exchanged.
\end{defi}

An example of a vertical exchange move is shown in Figure~\ref{exchangemovefig}.
\begin{figure}[ht]
\includegraphics{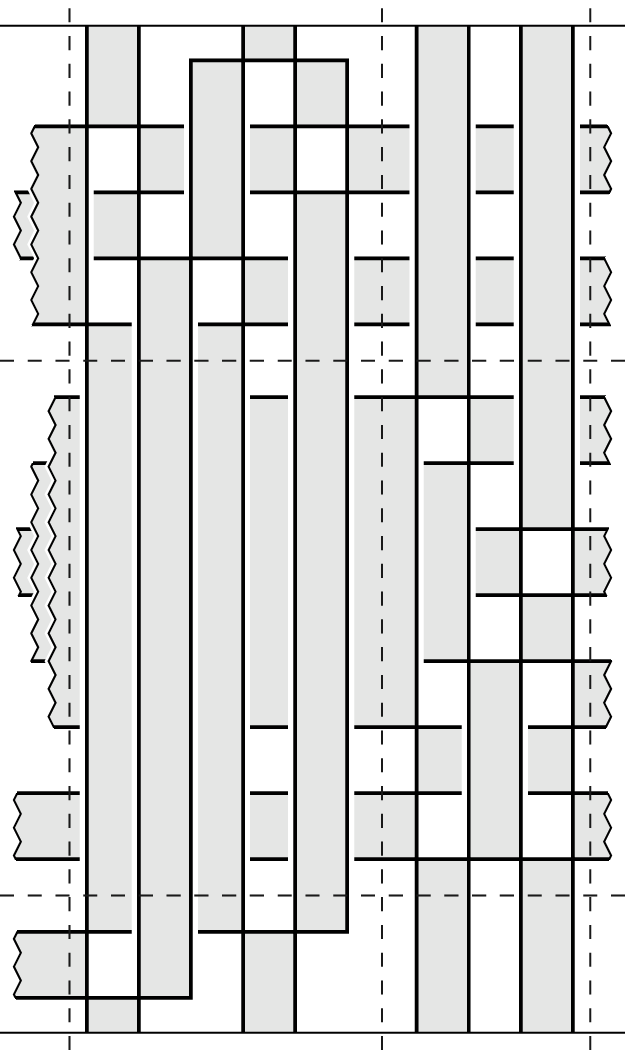}\put(-164,-10){$\theta_1$}\put(-73,-10){$\theta_2$}\put(-13,-10){$\theta_3$}%
\put(-195,42){$\varphi_1$}\put(-195,197){$\varphi_2$}
\hskip1cm\raisebox{145pt}{$\longleftrightarrow$}\hskip1cm
\includegraphics{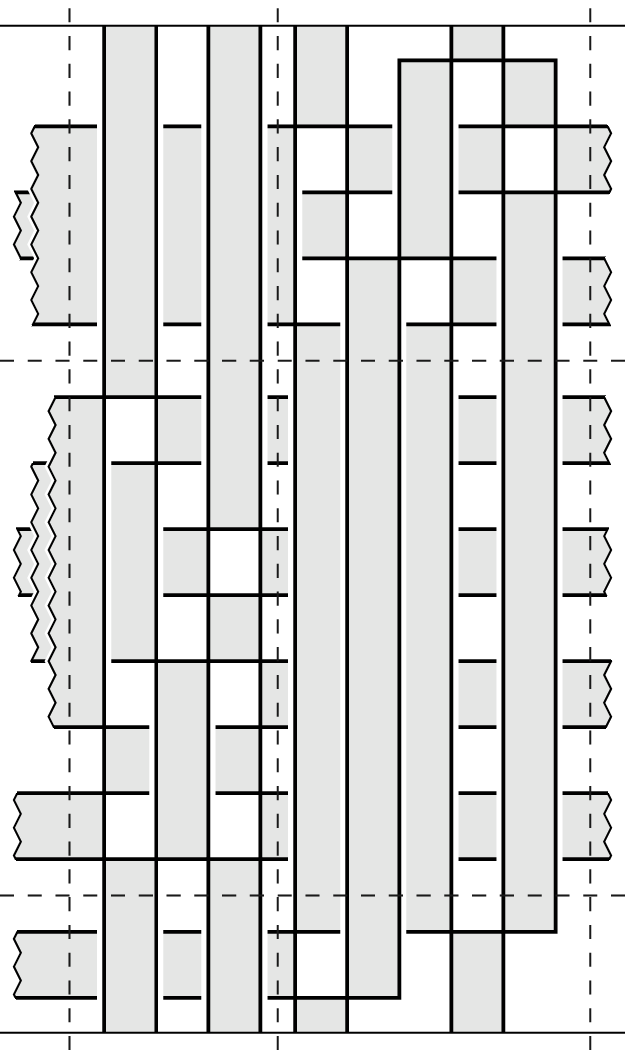}\put(-164,-10){$\theta_1$}\put(-130,-10){$\psi(\theta_1-\theta_2+\theta_3)$}\put(-13,-10){$\theta_3$}%
\put(-195,42){$\varphi_1$}\put(-195,197){$\varphi_2$}
\caption{An exchange move}\label{exchangemovefig}
\end{figure}

\begin{defi}\label{flype-def}
Let $\Pi$ be a rectangular diagram of a surface, and let $\theta_1,\theta_2,\theta_3,\varphi_1,\varphi_2,\varphi_3\in\mathbb S^1$
be such that:
\begin{enumerate}
\item
$\theta_2\in(\theta_1;\theta_3)$, $\varphi_2\in(\varphi_1;\varphi_3)$;
\item
points $v_1,v_2,v_3,v_4,v_5\in\mathbb T^2$ having coordinates
$(\theta_1,\varphi_3)$, $(\theta_2,\varphi_3)$, $(\theta_3,\varphi_3)$, $(\theta_3,\varphi_2)$,
$(\theta_3,\varphi_1)$, respectively, are vertices of $\Pi$, and, moreover, $v_1,v_3,v_5$ are $\diagup$-vertices,
and $v_2,v_4$ are $\diagdown$-vertices;
\item
$v_2$, $v_3$, and~$v_4$ do not belong to $\partial\Pi$;
\item
there are no more vertices of $\Pi$ in $[\theta_1;\theta_3]\times[\varphi_1;\varphi_3]$.
\end{enumerate}

These assumptions imply that $\Pi$ contains, among others, four rectangles of the following form:
$$r_1=[\theta_1;\theta_2]\times[\varphi';\varphi_3],\quad
r_2=[\theta_2;\theta_3]\times[\varphi_3;\varphi''],\quad
r_3=[\theta_3;\theta'']\times[\varphi_2;\varphi_3],\quad
r_4=[\theta';\theta_3]\times[\varphi_1;\varphi_2]$$
with some $\theta',\theta''\in(\theta_3;\theta_1)$, $\varphi',\varphi''\in(\varphi_3;\varphi_1)$.

Let $\Pi'$ be obtained from $\Pi$ by replacing these four rectangles with the following ones:
$$r_1'=[\theta_1;\theta_2]\times[\varphi';\varphi_1],\quad
r_2'=[\theta_2;\theta_3]\times[\varphi_1;\varphi''],\quad
r_3'=[\theta_1;\theta'']\times[\varphi_2;\varphi_3],\quad
r_4'=[\theta';\theta_1]\times[\varphi_1;\varphi_2];
$$
see Figure~\ref{flypefig}.
\begin{figure}[ht]
\includegraphics{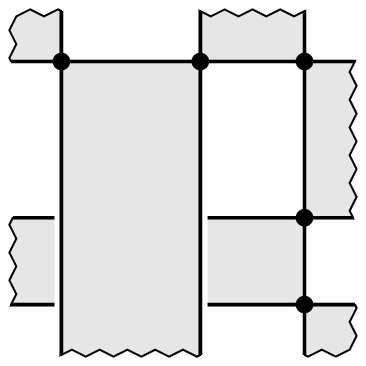}\put(-88,93){$v_1$}\put(-62,93){$v_2$}\put(-18,93){$v_3$}\put(-18,38){$v_4$}\put(-18,23){$v_5$}%
\put(-73,64){$r_1$}\put(-38,95){$r_2$}\put(-17,64){$r_3$}\put(-38,31){$r_4$}\put(-95,-2){$m_{\theta_1}$}\put(-55,-2){$m_{\theta_2}$}%
\put(-25,-2){$m_{\theta_3}$}\put(-120,18){$\ell_{\varphi_1}$}\put(-120,43){$\ell_{\varphi_2}$}\put(-120,88){$\ell_{\varphi_3}$}
\hskip1cm\raisebox{50pt}{$\longleftrightarrow$}\hskip1cm
\includegraphics{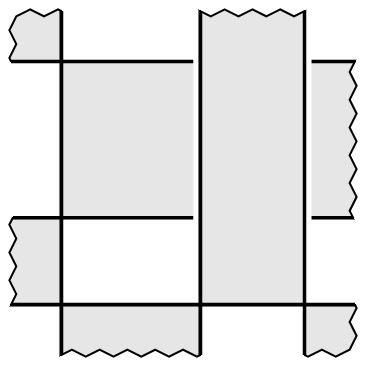}%
\put(-73,10){$r_1'$}\put(-38,31){$r_2'$}\put(-73,64){$r_3'$}\put(-101,31){$r_4'$}
\\[5pt]
\includegraphics{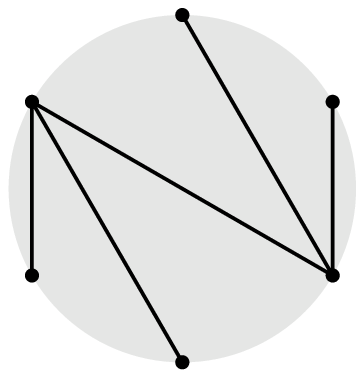}\put(-114,57){$\widehat v_1$}\put(-88,37){$\widehat v_2$}\put(-63,50){$\widehat v_3$}%
\put(-50,70){$\widehat v_4$}\put(-13,55){$\widehat v_5$}\put(-90,21){$\widehat r_1$}\put(-46,21){$\widehat r_2$}%
\put(-80,92){$\widehat r_3$}\put(-36,92){$\widehat r_4$}\put(-65,117){$\widehat\ell_{\varphi_2}$}%
\put(-13,87){$\widehat\ell_{\varphi_1}$}\put(-120,87){$\widehat\ell_{\varphi_3}$}\put(-13,30){$\widehat m_{\theta_3}$}%
\put(-65,-2){$\widehat m_{\theta_2}$}\put(-120,30){$\widehat m_{\theta_1}$}%
\hskip0.5cm\raisebox{58pt}{$\longleftrightarrow$}\hskip0.5cm\includegraphics{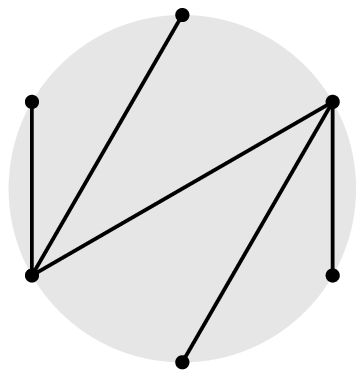}\put(-85,21){$\widehat r_1'$}\put(-41,21){$\widehat r_2'$}%
\put(-87,92){$\widehat r_3'$}\put(-43,92){$\widehat r_4'$}\\
\caption{A flype}\label{flypefig}
\end{figure}
Then we say that the passage from $\Pi$ to $\Pi'$, or the other way, is \emph{a flype}.
Note that the other rectangles of $\Pi$ and $\Pi'$, not
shown in Figure~\ref{flypefig}, are allowed to pass through
$[\theta_1;\theta_3]\times[\varphi_1;\varphi_3]$, the region where the modification occurs.

If~$\Pi\mapsto\Pi'$ is a flype and~$T$ is the transformation of rectangular diagrams induced by
the map~$(\theta,\varphi)\mapsto(-\theta,\varphi)$, then~$T(\Pi)\mapsto T(\Pi')$ is also called a flype.
\end{defi}

To complete Definitions~\ref{wrinkledef}--\ref{flype-def} we need to define the respective morphisms~$\widehat\Pi\rightarrow\widehat\Pi'$
in each case. In~\cite{distinguishing} we described an isotopy from~$\widehat\Pi$ to~$\widehat\Pi'$, which can
be continued to the whole of~$\mathbb S^3$ thus producing the required morphism. Here we outline the
general principles of the construction.

First, let~$\Pi\mapsto\Pi'$ be any basic move other than
an exchange move. Then there are open discs or open half-discs~$d\subset\widehat\Pi$ and~$d'\subset\widehat\Pi'$
such that
$$\overline d=\bigcup_{r\in\Pi\setminus\Pi'}\widehat r,\quad\overline d'=\bigcup_{r\in\Pi'\setminus\Pi}\widehat r$$
(by an open \emph{half-disc} in a surface~$F$ we mean an open subset~$d\subset F$ such that~$d$ is homeomorphic
to~$\{(x,y)\in\mathbb R^2:x^2+y^2<1,y\geqslant0\}$, and~$\overline d\setminus d$ is not a single point).
There is also an open $3$-ball~$B\subset\mathbb S^3$ such that~$B\cap\widehat\Pi=d$ and~$B\cap\widehat\Pi'=d'$.
The morphism~$\Pi\mapsto\Pi'$ is represented by any
homeomorphism~$(\mathbb S^3,\widehat\Pi)\rightarrow(\mathbb S^3,\widehat\Pi')$
which is identical outside~$B$.

Now let~$\Pi\mapsto\Pi'$ be a vertical exchange move. We use the notation from Definition~\ref{exchange-def}.
Pick an~$\varepsilon>0$ so small that no occupied meridian of~$\Pi$ is contained in~$\bigl((\theta_1;\theta_1+\varepsilon]\cup
[\theta_2-\varepsilon;\theta_2+\varepsilon]\cup[\theta_3-\varepsilon;\theta_3)\bigr)\times\mathbb S^1$.
Choose self-homeomorphisms~$\psi_1,\psi_2$ of~$\mathbb S^1$ such that:
\begin{enumerate}
\item
$\psi_1(\theta)=\psi_2(\theta)=\theta$ if~$\theta\in[\theta_3;\theta_1]$;
\item
$\psi_1(\theta)=\psi(f(\theta))$ if~$\theta\in[\theta_1+\varepsilon;\theta_2-\varepsilon]$;
\item
$\psi_2(\theta)=\psi(f(\theta))$ if~$\theta\in[\theta_2+\varepsilon;\theta_3-\varepsilon]$.
\end{enumerate}

Here and in the sequel we use \emph{the open book decomposition} of~$\mathbb S^3$
with the \emph{binding}~$\mathbb S^1_{\tau=0}$ and \emph{pages} of the form
$$\mathscr P_{\theta_0}=\{\theta_0\}*\mathbb S^1_{\tau=0},\quad\theta_0\in\mathbb S^1.$$
In other words, for any~$\theta_0\in\mathbb S^1$, the page~$\mathscr P_{\theta_0}$ is defined
by the equation~$\theta=\theta_0$.

The union~$\widehat r_1\cup\widehat r_2$ is a $2$-disc. There is a small open neighborhood~$U$ of this disc such that~$U$
is disjoint from~$\widehat\Pi$, $\mathbb S^3\setminus U$ is homeomorphic to a closed $3$-ball, and, for any~$\theta_0\in[\theta_1;\theta_3]$,
the intersection of~$\overline U$ with the page~$\mathscr P_{\theta_0}$ is a regular
neighborhood of~$(\widehat r_1\cup\widehat r_2)\cap\mathscr P_{\theta_0}$ in~$\mathscr P_{\theta_0}$.

For any~$\theta\in[\theta_1;\theta_3]$, the complement~$\mathscr P_\theta\setminus U$ is a union of two closed
$2$-discs, which we denote by~$\mathscr P_\theta'$ and~$\mathscr P_\theta''$ using the following rule:
$$\mathscr P_\theta'\cap\mathbb S^1_{\tau=0}\subset[\varphi_2;\varphi_1],\quad
\mathscr P_\theta''\cap\mathbb S^1_{\tau=0}\subset[\varphi_1;\varphi_2].$$

There is then an embedding~$\sigma:\mathbb S^3\setminus U\rightarrow\mathbb S^3$
such that:
\begin{enumerate}
\item
$\sigma(\widehat\Pi)=\widehat\Pi'$;
\item
$\sigma$ is identical on~$\mathbb S^1_{\tau=0}\setminus U$;
\item
if~$\theta\in[\theta_3;\theta_1]$, then~$\sigma(\mathscr P_\theta\setminus U)\subset\mathscr P_\theta$;
\item
if~$\theta\in[\theta_1;\theta_3]$, then~$\sigma(\mathscr P_\theta')\subset\mathscr P_{\psi_1(\theta)}$
and~$\sigma(\mathscr P_\theta'')\subset\mathscr P_{\psi_2(\theta)}$.
\end{enumerate}
This embedding can be extended to a homeomorphism~$\mathbb S^3\rightarrow\mathbb S^3$,
which represents the morphism~$\widehat\Pi\rightarrow\widehat\Pi'$ assigned to the exchange move~$\Pi\mapsto\Pi'$.

\begin{defi}\label{bound-fixed-def}
Let~$\Pi$ be a rectangular diagram of a surface, and let~$X$ be a subset of~$\partial\Pi$.
A basic move~$\Pi\mapsto\Pi'$ is said to be \emph{fixed} on~$X$ if~$X\subset\partial\Pi'$
and the types of any~$v\in X$ as a vertex of~$\Pi$ and as a vertex of~$\Pi'$ (which are~`$\diagup$' or~`$\diagdown$') coincide.
\end{defi}

\begin{theo}\label{main-theo}
Let~$\Pi$ and~$\Pi'$ be rectangular diagrams of surfaces, and let~$R$ be a rectangular
diagram of a link such that~$L=\widehat R$
is a \emph(possibly empty\emph) sublink of~$\partial\widehat\Pi$.
Let also~$\phi$ be a self-homeomorphism of~$\mathbb S^3$
that takes~$\widehat\Pi$ to~$\widehat\Pi'$.
Then the following two conditions are equivalent.
\begin{enumerate}
\item[(i)]
There exists an isotopy from~$\mathrm{id}|_{\mathbb S^3}$ to~$\phi$
fixed on~$L$ and such that the
tangent plane to the surface at any point~$p\in L$ is preserved during the isotopy.
\item[(ii)]
There exists a sequence of basic moves fixed on~$R$
$$\Pi=\Pi_0\mapsto\Pi_1\mapsto\ldots\mapsto\Pi_N=\Pi'$$
such that the composition of the corresponding morphisms is~$[\phi]$.
\end{enumerate}
\end{theo}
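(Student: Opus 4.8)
The plan is to prove the non-trivial implication (i)$\Rightarrow$(ii); the converse is immediate since every basic move comes equipped with an isotopy of $\mathbb S^3$ that, by inspection of Definitions~\ref{wrinkledef}--\ref{flype-def}, can be taken to fix $L=\widehat R$ pointwise together with the tangent planes along it whenever the move is fixed on $R$. For the forward direction, I would first reduce to a combinatorial statement about a generic one-parameter family. Given the isotopy $\{\phi_t\}_{t\in[0,1]}$ from $\mathrm{id}$ to $\phi$ fixing $L$ and the tangent planes along $L$, consider the family of surfaces $F_t=\phi_t(\widehat\Pi)$. After a small perturbation (fixed near $L$) one arranges that the torus projection $\prtor$ restricted to each $F_t$ is generic, and that the family $\{F_t\}$ meets the discriminant of ``non-generic projections'' transversally at finitely many parameter values $0<t_1<\dots<t_M<1$. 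The key tool here is the rectangular-diagram description of a surface in ``Morse--theoretic'' terms relative to the open book decomposition with pages $\mathscr P_{\theta_0}$ and the dual one with pages $\mathscr P^\varphi$: a surface with generic projection is encoded, up to the morphisms realized by wrinkle and (de)stabilization moves, by a rectangular diagram, and this is essentially the content of~\cite[Theorem~1]{dp17} made parametric.

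The main body of the argument is then the analysis of what happens at each crossing $t=t_i$ of the discriminant, i.e.\ a ``bifurcation catalogue''. The discriminant strata of lowest codimension correspond to: (a) a single non-generic tangency of $F_t$ with a page $\mathscr P_{\theta_0}$ or $\mathscr P^\varphi$ (a birth/death or a saddle of the restricted height function) --- these are absorbed by (half-)wrinkle creation/reduction moves and by (de)stabilizations when the event occurs on $\partial F_t$; (b) two critical levels of the projection passing through each other without interacting --- realized by exchange moves; (c) a ``handle-slide''-type event where the combinatorial type changes through a flype configuration. One must check that this list is exhaustive: that is, enumerate the codimension-one degenerations of the pair (surface, pair of transverse open books) and match each to exactly one basic move or a short composition thereof. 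This is where the bulk of Sections~\ref{bubble-sec}--\ref{movie-moves-sec} (``auxiliary tools'', ``movie moves'') is presumably spent, and it is the step I expect to be the main obstacle: the hard part will be controlling the behaviour of the boundary $\partial F_t=\phi_t(\partial\widehat\Pi)$ simultaneously with the interior, since the boundary is itself undergoing a parametric family of the Dynnikov rectangular-diagram-of-a-link moves, and the surface moves must be chosen compatibly so that restricting to $\partial$ recovers exactly the allowed link moves.

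Having the bifurcation catalogue, I would assemble the proof as follows. Choose a generic path in the space of (surface, isotopy) pairs from the given data to the trivial one, cut it at the discriminant crossings, and on each complementary interval use the parametric version of~\cite[Theorem~1]{dp17} to obtain a rectangular diagram representing $F_t$ together with an identification of consecutive diagrams by the basic move dictated by the crossing type from the catalogue. Concatenating gives a sequence $\Pi=\Pi_0\mapsto\dots\mapsto\Pi_N=\Pi'$; by construction the composite morphism is the one carried by the path, which is $[\phi]$. For the ``fixed on $R$'' refinement, observe that the whole construction can be performed relative to a neighbourhood of $L$ in $\mathbb S^3$: since $\phi_t$ fixes $L$ and its tangent planes, one first makes the family $\{F_t\}$ constant equal to a fixed rectangular-diagram picture near $\widehat R$ (possible because the tangent-plane condition lets us assume the projection $\prtor|_{F_t}$ is unchanged near $L$), and then all perturbations and all basic moves in the catalogue are supported away from $\widehat R$, hence fixed on $R$ in the sense of Definition~\ref{bound-fixed-def}. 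The two boundary sections~\ref{fixed-b-sec} and~\ref{boundary-sec} presumably carry out exactly this relativization and the final concatenation bookkeeping.
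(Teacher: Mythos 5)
Your outline follows the same general strategy as the paper (make the family $\{F_t\}$ generic with respect to the open book decomposition, catalogue the codimension-one bifurcations, translate each into basic moves), but it has two genuine gaps that the paper spends most of its length closing. First, the ``parametric version of \cite[Theorem~1]{dp17}'' that you invoke does not exist off the shelf: a surface in generic position with respect to the pages $\mathscr P_\theta$ can intersect a page in \emph{closed} leaves, and such a surface is not encoded by any rectangular diagram (equivalently, by any movie diagram of non-crossing chord diagrams). The paper resolves this with geometric finger moves in the sense of Ito--Kawamuro, introducing for each generic $t$ a whole set $\mathscr Z_t$ of ways to break up the closed leaves and proving that any two choices are related by finger moves and rescalings performed within $\mathscr Z_t$; moreover, the bifurcation analysis must then be redone for the broken-up surfaces (e.g.\ a saddle--center birth creates closed leaves that must immediately be broken, and a collision of two saddles joined by a separatrix splits into several cases according to whether closed leaves or saddle connections are present). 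Without this, the assignment $t\mapsto\Pi_t$ in your concatenation step is simply undefined on large parameter ranges, and the well-definedness of the composite morphism is not controlled.

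Second, your treatment of the boundary is where the approach would actually fail as stated. Making the family constant near $\widehat R$ handles $L$, but says nothing about the components of $\partial\widehat\Pi$ \emph{not} in $L$, which move freely and need not even coincide with the corresponding components of $\partial\widehat\Pi'$ at the end. A bifurcation catalogue for a surface with moving boundary (arcs of $\partial F_t$ sweeping through pages and through the binding $\mathbb S^1_{\tau=0}$) is not carried out anywhere in your proposal and is not obviously reducible to the listed basic moves. The paper avoids it entirely: it cuts the isotopy into intervals on which either the whole boundary is fixed together with its tangent planes (this is the content of Proposition~\ref{fixed-b-prop}, the genuinely hard case, proved by the movie-diagram machinery) or the surface is monotonically growing or shrinking, $\phi_t(F_0)\subset F_i$; the latter intervals are handled combinatorially by Lemmas~\ref{delete-rect-lem} and~\ref{homot-eq-lem}, which decompose an inclusion of diagrams $\Pi\subset\Pi'$ inducing a homotopy equivalence into basic moves by removing rectangles one at a time. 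Some substitute for this growing/shrinking mechanism (or a full moving-boundary catalogue) is needed before your argument closes.
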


The proof of Theorem~\ref{main-theo} will be given in full generality in Section~\ref{boundary-sec}.
The hardest part of the proof
is the subject of Section~\ref{fixed-b-sec} (Proposition~\ref{fixed-b-prop}),
where we restrict ourselves to the case~$R=\partial\Pi$.

\section{Bubble moves}\label{bubble-sec}

\begin{defi}\label{bubble-def}
Let~$r=[\theta_1;\theta_2]\times[\varphi_1;\varphi_2]$ be a rectangle of a rectangular diagram
of a surface~$\Pi$, and let~$\theta_3,\theta_4\in(\theta_1;\theta_2)$ be
such that no occupied meridian of~$\Pi$ lies in the domain~$[\theta_3;\theta_4]\times\mathbb S^1$,
and the rectangle~$[\theta_3;\theta_4]\times[\varphi_1;\varphi_2]$ is not contained
in the interior of any rectangle in~$\Pi$.
Then the passage from~$\Pi$ to~$\Pi'=\Pi\cup\{r_1,r_2,r_3\}\setminus\{r\}$ (assigned with the morphism~$\widehat\Pi\rightarrow
\widehat\Pi'$ described below), where
$$r_1=[\theta_1;\theta_3]\times[\varphi_1;\varphi_2],\quad
r_2=[\theta_3;\theta_4]\times[\varphi_2;\varphi_1],\quad
r_3=[\theta_4;\theta_2]\times[\varphi_1;\varphi_2],$$
is called \emph{a vertical bubble creation move}, and the inverse passage~$\Pi'\mapsto\Pi$
\emph{a vertical bubble reduction move}.

One can see that the transition from~$\widehat\Pi$ to~$\widehat\Pi'$ is a
replacement of the interior of the two-disc~$\widehat r$
with the interior of the two-disc~$d=\widehat r_1\cup\widehat r_2\cup\widehat r_3$,
and we have~$\widehat r\cap d=\partial\widehat r=\partial d$. 
The interior of both discs lies in the domain~$\theta\in(\theta_1;\theta_2)$.
For any~$\theta\in(\theta_1;\theta_2)$, the intersection of each of the discs
with~$\mathscr P_\theta$ is an arc, and the two arcs~$\widehat r\cap\mathscr P_\theta$
and~$d\cap\mathscr P_\theta$ enclose a $2$-disc whose interior is disjoint from~$\widehat\Pi$.
This implies that there exists a self-homeomorphism~$\phi$ of~$\mathbb S^3$
preserving each page~$\mathscr P_\theta$ and taking~$\widehat\Pi$ to~$\widehat\Pi'$.
The morphism~$[\phi]$ is the one assigned to the bubble creation move~$\Pi\mapsto\Pi'$.

\emph{Horizontal bubble creation} and \emph{reduction moves} are defined similarly
with the roles of~$\theta$ and~$\varphi$ exchanged.
\end{defi}

Bubble moves are illustrated in Figure~\ref{bubble-fig}
\begin{figure}[ht]
\includegraphics{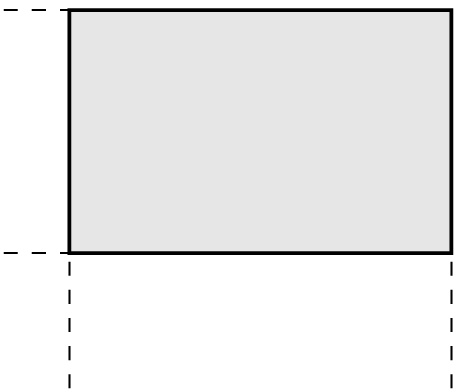}\put(-163,38){$\varphi_1$}\put(-163,108){$\varphi_2$}
\put(-133,-10){$\theta_1$}\put(-23,-10){$\theta_2$}\put(-77,73){$r$}
\hskip1cm\raisebox{70pt}{$\longleftrightarrow$}\hskip1cm
\includegraphics{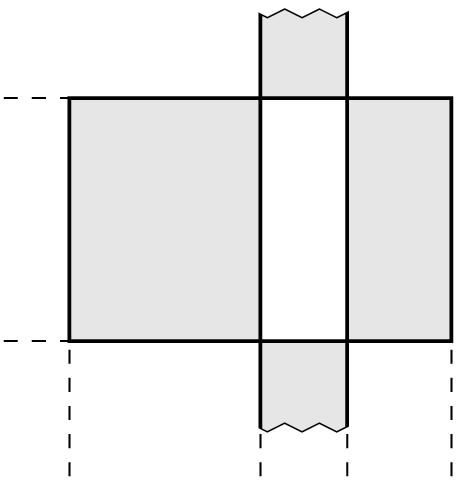}\put(-163,38){$\varphi_1$}\put(-163,108){$\varphi_2$}
\put(-133,-10){$\theta_1$}\put(-23,-10){$\theta_2$}
\put(-78,-10){$\theta_3$}\put(-53,-10){$\theta_4$}
\put(-105,73){$r_1$}\put(-39,73){$r_3$}\put(-67,25){$r_2$}\put(-67,120){$r_2$}

\vskip5mm
\includegraphics{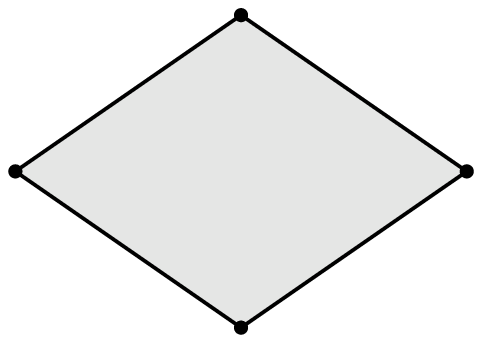}
\put(-155,48){$\widehat m_{\theta_1}$}\put(0,48){$\widehat m_{\theta_2}$}
\put(-75,-9){$\widehat\ell_{\varphi_1}$}\put(-75,102){$\widehat\ell_{\varphi_2}$}
\put(-73,48){$\widehat r$}
\hskip1cm\raisebox{48pt}{$\longleftrightarrow$}\hskip1cm
\includegraphics{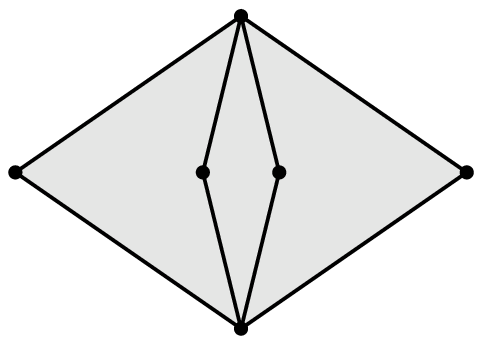}
\put(-155,48){$\widehat m_{\theta_1}$}\put(0,48){$\widehat m_{\theta_2}$}
\put(-75,-9){$\widehat\ell_{\varphi_1}$}\put(-75,102){$\widehat\ell_{\varphi_2}$}
\put(-74,48){$\widehat r_2$}\put(-115,48){$\widehat r_1$}\put(-33,48){$\widehat r_3$}
\put(-55,48){$\widehat m_{\theta_4}$}\put(-101,48){$\widehat m_{\theta_3}$}
\caption{Bubble moves}\label{bubble-fig}
\end{figure}

\begin{lemm}
Any bubble move of marked rectangular diagrams of surfaces can be decomposed into basic moves.
\end{lemm}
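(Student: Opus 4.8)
The plan is to exhibit a bubble creation move as a composition of one (de)stabilization move and one exchange move, with appropriate wrinkle moves inserted if needed so that the hypotheses of Definition~\ref{stab-move-def} and Definition~\ref{exchange-def} are met. Consider a vertical bubble creation move $\Pi\mapsto\Pi'=\Pi\cup\{r_1,r_2,r_3\}\setminus\{r\}$ as in Definition~\ref{bubble-def}, with $r=[\theta_1;\theta_2]\times[\varphi_1;\varphi_2]$. The first step is to perform a stabilization move that ``splits off'' the rectangle $r$ near one of its vertical edges, say near $m_{\theta_1}$: choose $\theta_0\in(\theta_1;\theta_2)$ close to $\theta_1$ with no occupied meridian of $\Pi$ in $[\theta_1;\theta_0]\times\mathbb S^1$ other than $m_{\theta_1}$, place a new free vertex $v_1$ on $m_{\theta_0}$ (which is legitimate precisely because $[\theta_1;\theta_0]\times\mathbb S^1$ contains no occupied longitude or rectangle other than $r$ itself, as guaranteed by the hypothesis that $[\theta_3;\theta_4]\times[\varphi_1;\varphi_2]$ is not interior to any rectangle — a small perturbation of this observation), and apply the (de)stabilization of Definition~\ref{stab-move-def}. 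The result replaces $r$ by a pair of rectangles sharing the meridian $m_{\psi(\theta_0)}$, one of which is a thin ``tongue'' $[\theta_1;\psi(\theta_0)]\times[\varphi_1;\varphi_2]$ (up to orientation, after applying the symmetry $T$ in Definition~\ref{stab-move-def}), and the other of which is a slightly shortened copy of $r$.

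The second step is an exchange move. After the stabilization, the diagram contains two rectangles stacked along a common meridian inside the region $[\theta_3;\theta_4]\times[\varphi_1;\varphi_2]$ (which is meridian-free in $\Pi$, and remains controlled after the small stabilization), whose vertices avoid all other rectangles. This is exactly the configuration in Definition~\ref{exchange-def}: two rectangles $[\theta_1';\theta_2']\times[\varphi_1;\varphi_2]$ and $[\theta_2';\theta_3']\times[\varphi_2;\varphi_1]$ with clear interiors, so a vertical exchange move is available. Applying it slides the thin tongue across the torus, producing precisely the three-rectangle configuration $\{r_1,r_2,r_3\}$ of the bubble (after absorbing the auxiliary homeomorphism $\psi$). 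Finally I would check that the composition of the morphisms assigned to the stabilization and the exchange move agrees with the morphism $[\phi]$ assigned to the bubble move in Definition~\ref{bubble-def}: both are realized by page-preserving (or nearly page-preserving) homeomorphisms supported in a ball disjoint from the rest of $\widehat\Pi$, and they act the same way on the single disc being modified, so they represent the same element of the morphism set. The horizontal bubble move is handled by the symmetry $(\theta,\varphi)\mapsto(\varphi,\theta)$, and bubble reduction is the inverse composition.

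The main obstacle I anticipate is verifying that the hypotheses of the (de)stabilization move are genuinely satisfied — in particular that the new free vertex $v_1$ can be chosen so that it ``does not belong to any rectangle and any occupied longitude of $\Pi$.'' The condition in Definition~\ref{bubble-def} only says $[\theta_3;\theta_4]\times[\varphi_1;\varphi_2]$ is not contained in the interior of any rectangle of $\Pi$; other rectangles of $\Pi$ may well pass through the strip $[\theta_3;\theta_4]\times\mathbb S^1$ and even overlap the sub-rectangle $[\theta_3;\theta_4]\times[\varphi_1;\varphi_2]$ partially. So before stabilizing I may need to first apply a horizontal (half-)wrinkle or exchange move to push such interfering rectangles away from the relevant vertex level, or else choose the split to occur on the horizontal edges rather than the vertical ones, depending on the local picture; a short case analysis on how the ambient rectangles meet the strip should dispose of this. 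The remaining verifications — that $\Pi'$ is again a rectangular diagram of a surface at each intermediate stage, and that the book-decomposition homeomorphisms compose correctly — are routine given the descriptions of the morphisms already recorded after Definitions~\ref{wrinkledef}--\ref{flype-def} and in Definition~\ref{bubble-def}.
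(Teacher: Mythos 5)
Your second step (an exchange move that slides a thin ``flipped'' rectangle from near~$m_{\theta_1}$ into the slot~$[\theta_3;\theta_4]$) is exactly what the paper does, but your first step is wrong: the move that splits~$r$ is a \emph{vertical wrinkle creation}, not a stabilization. The paper picks~$(\theta_5;\theta_6)\subset(\theta_1;\theta_3)$ close to~$\theta_1$ and performs the wrinkle creation of Definition~\ref{wrinkledef} at the meridian~$m_{\theta_1}$, using the two vertices~$(\theta_1,\varphi_1)$ (of type~$\diagdown$) and~$(\theta_1,\varphi_2)$ (of type~$\diagup$) that~$r$ already has there; this replaces~$r$ by~$[\theta_1;\theta_5]\times[\varphi_1;\varphi_2]$, $[\theta_5;\theta_6]\times[\varphi_2;\varphi_1]$, $[\theta_6;\theta_2]\times[\varphi_1;\varphi_2]$, and is always applicable with no case analysis.

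A stabilization cannot play this role, for three concrete reasons. First, Definition~\ref{stab-move-def} still requires a $\diagup$-vertex~$v_2$ of~$\Pi$ on the meridian~$m_{\theta_0}$ where the move happens; for~$\theta_0$ in the interior of~$(\theta_1;\theta_2)$ the meridian~$m_{\theta_0}$ carries no vertices of~$\Pi$, and the point~$v_1$ you want to place would lie both inside the rectangle~$r$ and on one of the occupied longitudes~$\ell_{\varphi_1},\ell_{\varphi_2}$ --- all of which the definition forbids. Second, a stabilization creates two new \emph{free} vertices, so it changes~$\partial\Pi$ (it stabilizes the boundary link), whereas a bubble move leaves~$\partial\Pi$ untouched because each of the four new vertices of~$r_1,r_2,r_3$ is shared by two rectangles; hence ``one stabilization plus one exchange'' can never compose to a bubble move --- you would at least need a matching destabilization, which your plan does not contain. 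Third, the intermediate diagram you describe --- a tongue~$[\theta_1;\psi(\theta_0)]\times[\varphi_1;\varphi_2]$ next to a shortened copy~$[\psi(\theta_0);\theta_2]\times[\varphi_1;\varphi_2]$ of~$r$ --- consists of two \emph{incompatible} rectangles (their intersection is a whole vertical edge containing vertices of both), so it is not a rectangular diagram of a surface at all; the middle piece of any legitimate split must sit over the complementary interval~$[\varphi_2;\varphi_1]$, which is precisely what the wrinkle creation provides. Your worry about other rectangles of~$\Pi$ interfering in the strip~$[\theta_3;\theta_4]\times\mathbb S^1$ is also moot once the wrinkle is created at~$m_{\theta_1}$: the hypotheses of Definition~\ref{bubble-def} guarantee the exchange move is then available.
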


\begin{proof}
Let~$\Pi,\Pi'$ 
be as in Definition~\ref{bubble-def}. Pick~$(\theta_5;\theta_6)\subset(\theta_1;\theta_3)$
so close to~$\theta_1$ that no occupied meridian of~$\Pi$ lies in the domain~$(\theta_1;\theta_6]$.
Define~$\Pi''$ to be~$\Pi\cup\{r_1',r_2',r_3'\}$, where
$$r_1'=[\theta_1;\theta_5]\times[\varphi_1;\varphi_2],\quad
r_2'=[\theta_5;\theta_6]\times[\varphi_2;\varphi_1],\quad
r_3'=[\theta_6;\theta_2]\times[\varphi_1;\varphi_2].$$
Then~$\Pi\mapsto\Pi''$ is a wrinkle creation move, and~$\Pi''\mapsto\Pi'$ is an exchange move.
It is elementary to verify that the composition of the respective morphisms yields the morphism
assigned to the bubble creation move~$\Pi\mapsto\Pi'$.
\end{proof}

\section{Movie diagrams}

\begin{defi}
By \emph{a non-crossing chord diagram} we mean a finite set
$$\bigl\{\{\varphi_1',\varphi_1''\},\{\varphi_2',\varphi_2''\},\ldots,\{\varphi_m',\varphi_m''\}\bigr\}$$
of unordered pairs (which are referred as \emph{chords}) of points of~$\mathbb S^1$ such that, for any~$i,j=1,\ldots,m$, $i\ne j$, we have
either~$\{\varphi_i',\varphi_i''\}\subset(\varphi_j';\varphi_j'')$ or~$\{\varphi_i',\varphi_i''\}\subset(\varphi_j'';\varphi_j')$.
The set of all non-crossing chord diagrams is denoted by~$\mathscr C$ and endowed with a discrete topology.
\end{defi}

\begin{defi}
Let~$C_1$ and~$C_2$ be two non-crossing chord diagrams. We say that the passage from~$C_1$ to~$C_2$
is \emph{an admissible event} if, for some~$\varphi_1,\varphi_2,\ldots,\varphi_k\in\mathbb S^1$, $k\geqslant 2$,
following on~$\mathbb S^1$ in the same circular order as listed, one of the diagrams (whichever of the two) can be obtained from the other
in one of the following ways:
\begin{enumerate}
\item
replacing~$\{\varphi_1,\varphi_2\},\{\varphi_3,\varphi_4\},\ldots,\{\varphi_{k-1},\varphi_k\}$
with~$\{\varphi_2,\varphi_3\},\{\varphi_4,\varphi_5\},\ldots,\{\varphi_{k-2},\varphi_{k-1}\}$ (provided~$k$ is even);
\item
replacing~$\{\varphi_1,\varphi_2\},\{\varphi_3,\varphi_4\},\ldots,\{\varphi_{k-2},\varphi_{k-1}\}$
with~$\{\varphi_2,\varphi_3\},\{\varphi_4,\varphi_5\},\ldots,\{\varphi_{k-1},\varphi_k\}$ (provided~$k$ is odd);
\item
replacing~$\{\varphi_1,\varphi_2\},\{\varphi_3,\varphi_4\},\ldots,\{\varphi_{k-1},\varphi_k\}$
with~$\{\varphi_2,\varphi_3\},\{\varphi_4,\varphi_5\},\ldots,\{\varphi_{k-2},\varphi_{k-1}\},\{\varphi_k,\varphi_1\}$ (provided~$k\geqslant4$ is even).
\end{enumerate}
The points~$\varphi_1,\ldots,\varphi_k\in\mathbb S^1$ are then said to be \emph{involved} in the event.

To specify an admissible event we indicate which chords are removed and which are added,
separating the two lists by a `$\leadsto$' sign. The two
extreme cases, when no chord is removed and just one added, or no one is added and just one removed, are allowed.
For instance, the following are valid specifications
of admissible events (provided that~$\varphi_1,\varphi_2,\varphi_3,\varphi_4$ follow
on~$\mathbb S^1$ in the same circular order as listed):
$$\{\varphi_1,\varphi_2\}\leadsto\varnothing,\quad
\varnothing\leadsto\{\varphi_1,\varphi_2\},\quad
\{\varphi_1,\varphi_2\}\leadsto\{\varphi_2,\varphi_3\},\quad
\{\varphi_1,\varphi_2\},\{\varphi_3,\varphi_4\}\leadsto\{\varphi_2,\varphi_3\},\{\varphi_4,\varphi_1\}.$$

\end{defi}

\begin{defi}
\emph{A movie diagram} is a left continuous map~$\Psi:\mathbb S^1\rightarrow\mathscr C$ with finitely many 
discontinuities, such that,
for any discontinuity point~$\theta_0\in\mathbb S^1$, the passage from~$C_0=\Psi(\theta_0)$ to~$C_1=\lim_{\theta\rightarrow\theta_0+0}\Psi(\theta)$
is an admissible event.
\end{defi}

When~$\Psi$ is a movie diagram, and~$\theta_0\in\mathbb S^1$,
we denote $\lim_{\theta\rightarrow\theta_0+0}\Psi(\theta)$ by~$\Psi(\theta_0+0)$ for brevity.
We also denote by~$\Phi(\Psi)$ the set of all~$\varphi_0\in\mathbb S^1$ such that for some~$\theta_0,\varphi_1\in\mathbb S^1$
the non-crossing chord diagram~$\Psi(\theta_0)$ contains the chord~$\{\varphi_0,\varphi_1\}$.

\begin{defi}
Let~$\Psi$ be a movie diagram.
We say that a surface~$F\subset\mathbb S^3$ \emph{represents~$\Psi$} if the following conditions hold:
\begin{enumerate}
\item
$F\cap\mathbb S^1_{\tau=0}=\Phi(\Psi)$;
\item
for any~$\theta\in\mathbb S^1$, each connected component of the intersection~$\mathscr P_\theta\cap F$
is either an arc or a star-like graph whose edges join a few points in~$\mathbb S^1_{\tau=0}$
with a vertex located in the interior of~$\mathscr P_\theta$;
\item
for any~$\theta\in\mathbb S^1$, the points~$\varphi',\varphi''\in\mathbb S^1_{\tau=0}$
are in the same connected component of~$\mathscr P_\theta\cap F$ if and only if one
of the following occurs:
\begin{itemize}
\item
$\{\varphi',\varphi''\}\in\Psi(\theta)$;
\item
$\Psi$ has an event at~$\theta$ and~$\varphi',\varphi''$ are involved in it;
\end{itemize}
\item
there are only finitely many points at which~$F$ is tangent to some page~$\mathscr P_\theta$.
\end{enumerate}
\end{defi}

\begin{defi}
For every rectangular diagram of a surface~$\widehat\Pi$ we define \emph{the associated movie diagram}~$\Psi_\Pi$
by requesting that, whenever~$m_{\theta_0}$ is not an occupied meridian of~$\Pi$, we
have~$\{\varphi_1,\varphi_2\}\in\Psi_\Pi(\theta_0)$ if and only if the following holds:
$$\exists\ \theta_1,\theta_2\in\mathbb S^1\text{ such that }
\theta_0\in(\theta_1;\theta_2)\text{ and either }
[\theta_1;\theta_2]\times[\varphi_1;\varphi_2]\in\Pi\text{ or }[\theta_1;\theta_2]\times[\varphi_2;\varphi_1]\in\Pi.$$
\end{defi}

Proposition~\ref{another-char-prop} below, which follows easily from definitions, gives another characterization of~$\Psi_\Pi$.

\begin{prop}\label{another-char-prop}
Let~$\Pi$ be a rectangular diagram of a surface, and let~$\Psi$ be a movie diagram.
Then the surface~$\widehat\Pi$ represents~$\Psi$ if and only if~$\Psi=\Psi_\Pi$.
\end{prop}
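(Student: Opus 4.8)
The plan is to unwind both sides of the equivalence directly from the definitions, since the claim is essentially a dictionary entry: it says that the combinatorial recipe $\Psi_\Pi$ produces exactly the movie diagrams represented by $\widehat\Pi$. The key observation is that for $\theta_0\in\mathbb S^1$ \emph{not} an occupied meridian of $\Pi$, the page slice $\mathscr P_{\theta_0}\cap\widehat\Pi$ is a disjoint union of arcs, each arc being $\widehat r\cap\mathscr P_{\theta_0}$ for a rectangle $r=[\theta_1;\theta_2]\times[\varphi_1;\varphi_2]$ (or with $\varphi_1,\varphi_2$ swapped) with $\theta_0\in(\theta_1;\theta_2)$; by property~(1) of the discs $\widehat r$ the arc has its two endpoints precisely at $\widehat\ell_{\varphi_1}$ and $\widehat\ell_{\varphi_2}$ on $\mathbb S^1_{\tau=0}$, and its interior misses $\mathbb S^1_{\tau=0}$. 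So on such slices the connected-component relation on $\mathbb S^1_{\tau=0}\cap\widehat\Pi$ coincides exactly with the chord relation of $\Psi_\Pi(\theta_0)$, and $\widehat\Pi\cap\mathbb S^1_{\tau=0}=\widehat{\partial\Pi}=\partial\widehat\Pi$'s trace, which equals $\Phi(\Psi_\Pi)$ by construction of $\Psi_\Pi$.

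First I would prove the ``if'' direction: assuming $\Psi=\Psi_\Pi$, verify conditions~(1)--(4) of ``$\widehat\Pi$ represents $\Psi$''. Condition~(1) is the identity $\widehat\Pi\cap\mathbb S^1_{\tau=0}=\Phi(\Psi_\Pi)$, which follows because $\widehat\Pi\cap\mathbb S^1_{\tau=0}=\widehat{\partial\Pi}\cap\mathbb S^1_{\tau=0}$ and a point $\widehat\ell_\varphi$ lies in this set iff $\varphi$ occurs as a boundary-coordinate of some rectangle of $\Pi$, iff $\varphi\in\Phi(\Psi_\Pi)$. For condition~(2): over a non-occupied meridian the slice is a union of arcs (handled above); over an occupied meridian $m_{\theta_0}$ the slice $\mathscr P_{\theta_0}\cap\widehat\Pi$ is a union of arcs together with finitely many ``star-like'' pieces arising where several $\widehat r$ share the arc $\widehat m_{\theta_0}$ or $\widehat\ell_{\varphi}$ — here I would appeal to property~(4) of the discs $\widehat r$ (no singularities where compatible rectangles share a vertex), which forces the local picture at such a meridian to be exactly a cone on finitely many boundary points, i.e.\ star-like. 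Condition~(3) over non-occupied meridians is the component$\,=\,$chord dictionary noted above; over an occupied meridian $\theta_0$, the passage $\Psi_\Pi(\theta_0)\to\Psi_\Pi(\theta_0+0)$ must be checked to be the admissible event whose involved points are exactly the boundary points lying on the star-like components, and this is a finite case-check against the three types of admissible events (corresponding to whether the configuration of $\diagup$- and $\diagdown$-vertices on $m_{\theta_0}$ opens, closes, or reconnects chords). Condition~(4) holds because $\widehat\Pi$ is tangent to a page only near occupied meridians, of which there are finitely many, and each contributes finitely many tangencies.

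For the ``only if'' direction, suppose $\widehat\Pi$ represents $\Psi$; I must show $\Psi=\Psi_\Pi$. It suffices to check $\Psi(\theta_0)=\Psi_\Pi(\theta_0)$ for every $\theta_0$ that is not a discontinuity point of either $\Psi$ or $\Psi_\Pi$ and not an occupied meridian of $\Pi$, since both maps are left continuous with finitely many discontinuities and hence are determined by their values on a cofinite set (the values at discontinuities being forced as left limits, and the discontinuity sets themselves being forced to coincide). For such a generic $\theta_0$: by condition~(3) of ``$\widehat\Pi$ represents $\Psi$'', $\{\varphi',\varphi''\}\in\Psi(\theta_0)$ iff $\widehat\ell_{\varphi'},\widehat\ell_{\varphi''}$ lie in the same component of $\mathscr P_{\theta_0}\cap\widehat\Pi$ (no event at $\theta_0$), iff they are the two endpoints of one arc $\widehat r\cap\mathscr P_{\theta_0}$, iff $\{\varphi',\varphi''\}\in\Psi_\Pi(\theta_0)$ by definition of $\Psi_\Pi$.

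The main obstacle I anticipate is the bookkeeping at occupied meridians: one has to match up, precisely, the combinatorics of several compatible rectangles meeting along a shared meridian-arc $\widehat m_{\theta_0}$ or longitude-arc $\widehat\ell_{\varphi}$ with the three types of admissible events, and confirm that the ``involved points'' in the definition of a represented surface are exactly the endpoints of the star-like components — getting the orientation/circular-order conventions straight is where the routine-but-finicky work lies. Everything else reduces to the single clean fact that $\prtor$ carries $\widehat r$ to $r$ respecting boundaries (property~(1) and~(2) of the discs $\widehat r$), which turns slice topology into chord combinatorics.
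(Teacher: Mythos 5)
Your argument is correct and is precisely the definition-unwinding the paper intends: the authors state that the proposition ``follows easily from definitions'' and omit the proof altogether, and your dictionary between page slices $\mathscr P_\theta\cap\widehat\Pi$ and the chords/events of $\Psi_\Pi$ (via property~(1) of the discs $\widehat r$ at non-occupied meridians and the star-like pieces at occupied ones) is exactly that intended verification. One harmless slip: the identity $\widehat\Pi\cap\mathbb S^1_{\tau=0}=\widehat{\partial\Pi}\cap\mathbb S^1_{\tau=0}$ is false in general (a closed surface has $\partial\Pi=\varnothing$ while the trace on $\mathbb S^1_{\tau=0}$ is non-empty, since non-free vertices also contribute points $\widehat\ell_\varphi$), but your subsequent direct identification of $\widehat\Pi\cap\mathbb S^1_{\tau=0}$ with the set of $\varphi$-coordinates of vertices of $\Pi$, hence with $\Phi(\Psi_\Pi)$, is what the argument actually uses, so nothing is affected.
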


We omit the easy proof. For the reason which is clear from this statement, we say that a rectangular
diagram of a surface~$\Pi$ represents a movie diagram~$\Psi$ if~$\Psi=\Psi_\Pi$ or, equivalently,
if~$\widehat\Pi$ represents~$\Psi$.

\begin{exam}
Shown in Figure~\ref{movieexam-fig} are a rectangular diagram of a surface~$\Pi$
and the intersections of the pages~$\mathscr P_\theta$ with the surface~$\widehat\Pi$.
The topology of these intersections is the information encoded in the movie
diagram~$\Psi_\Pi$.
\begin{figure}[p]
\includegraphics[scale=.4]{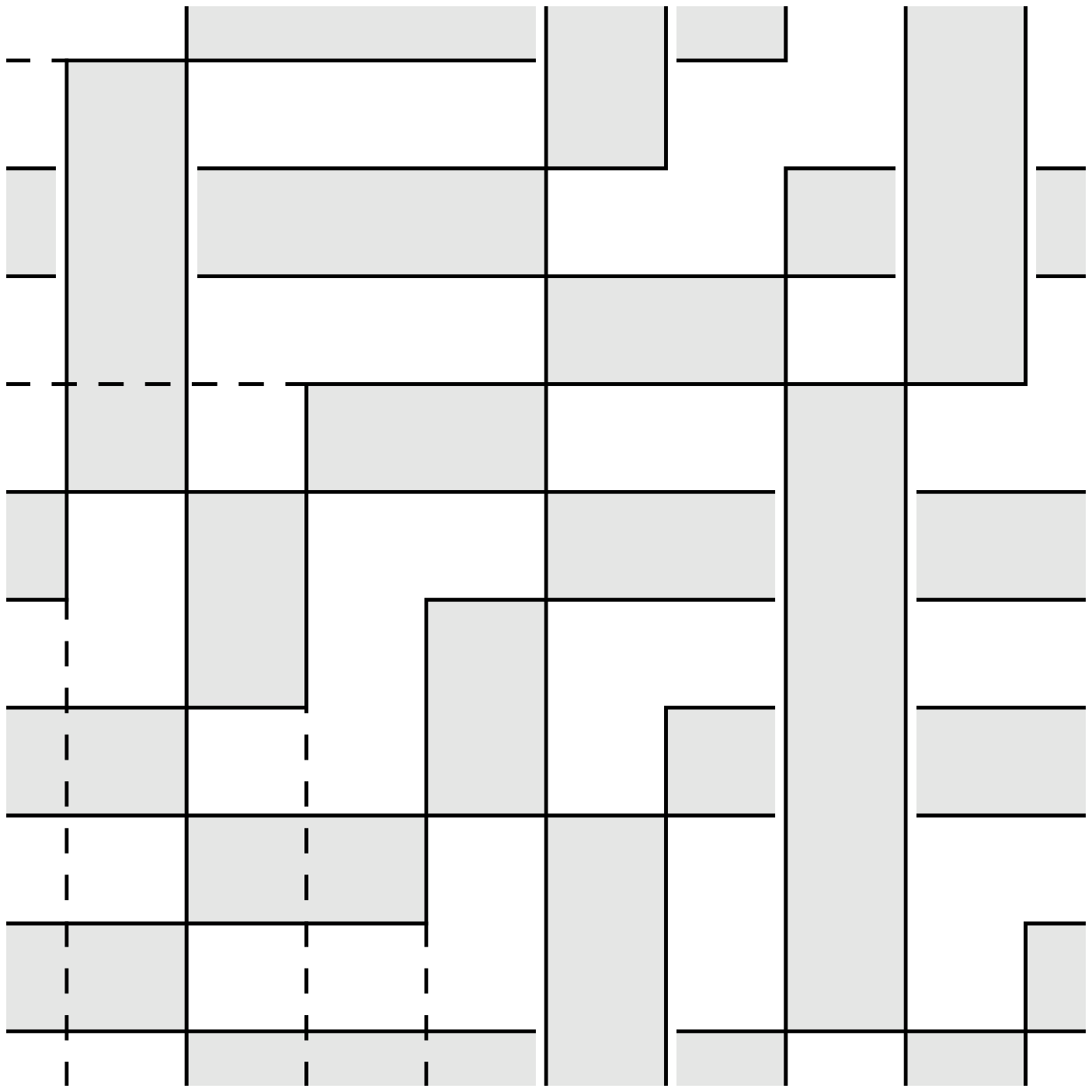}\put(-159,-8){$\theta_1$}\put(-141.125,-8){$\theta_2$}
\put(-123.25,-8){$\theta_3$}\put(-105.375,-8){$\theta_4$}\put(-87.5,-8){$\theta_5$}\put(-69.625,-8){$\theta_6$}
\put(-51.75,-8){$\theta_7$}\put(-16,-8){$\theta_8$}
\put(-180,10){$\varphi_1$}\put(-180,26.1){$\varphi_2$}\put(-180,42.2){$\varphi_3$}\put(-180,58.3){$\varphi_4$}
\put(-180,74.4){$\varphi_5$}\put(-180,90.5){$\varphi_6$}\put(-180,106.6){$\varphi_7$}
\put(-180,122.7){$\varphi_8$}\put(-180,138.8){$\varphi_9$}\put(-180,154.9){$\varphi_{10}$}

\bigskip
\includegraphics[scale=0.4]{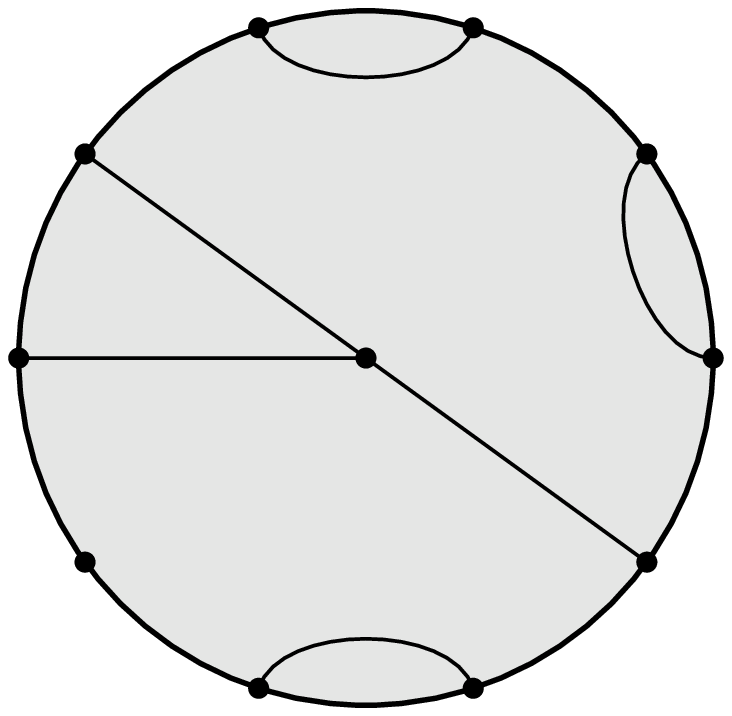}\put(0,43){$\varphi_1$}\put(-8,69){$\varphi_2$}
\put(-34,88){$\varphi_3$}\put(-63,88){$\varphi_4$}\put(-90,69){$\varphi_5$}
\put(-98,43){$\varphi_6$}\put(-90,17){$\varphi_7$}\put(-63,-2){$\varphi_8$}
\put(-34,-2){$\varphi_9$}\put(-8,17){$\varphi_{10}$}
\put(-50,50){$\theta=\theta_1$}
\hskip1cm
\includegraphics[scale=0.4]{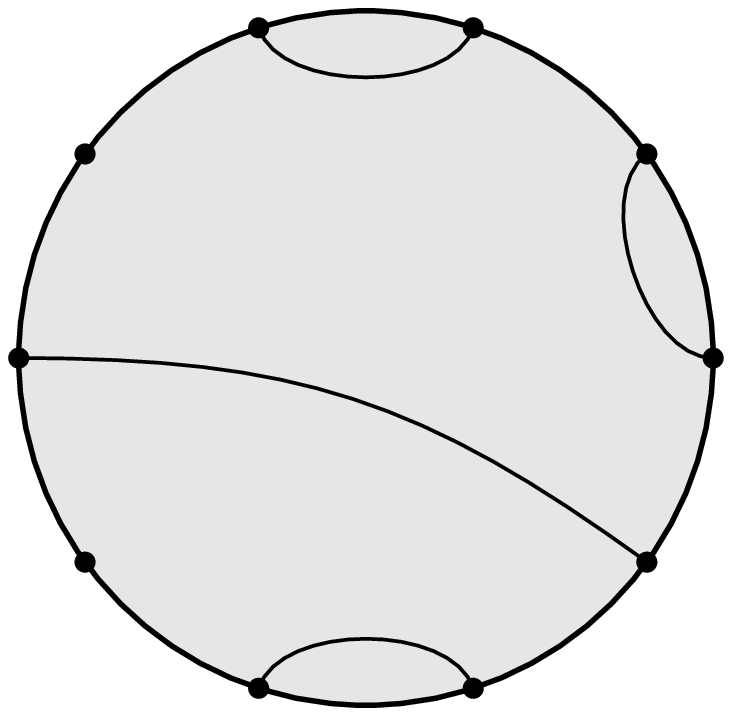}\put(0,43){$\varphi_1$}\put(-8,69){$\varphi_2$}
\put(-34,88){$\varphi_3$}\put(-63,88){$\varphi_4$}\put(-90,69){$\varphi_5$}
\put(-98,43){$\varphi_6$}\put(-90,17){$\varphi_7$}\put(-63,-2){$\varphi_8$}
\put(-34,-2){$\varphi_9$}\put(-8,17){$\varphi_{10}$}
\put(-70,50){$\theta\in(\theta_1;\theta_2)$}
\hskip1cm
\includegraphics[scale=0.4]{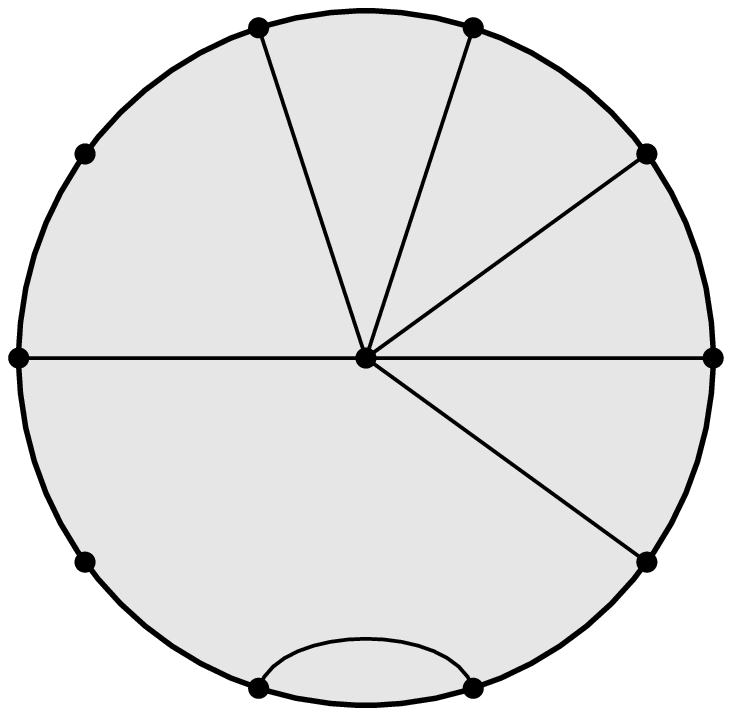}\put(0,43){$\varphi_1$}\put(-8,69){$\varphi_2$}
\put(-34,88){$\varphi_3$}\put(-63,88){$\varphi_4$}\put(-90,69){$\varphi_5$}
\put(-98,43){$\varphi_6$}\put(-90,17){$\varphi_7$}\put(-63,-2){$\varphi_8$}
\put(-34,-2){$\varphi_9$}\put(-8,17){$\varphi_{10}$}
\put(-75,50){$\theta=\theta_2$}
\hskip1cm
\includegraphics[scale=0.4]{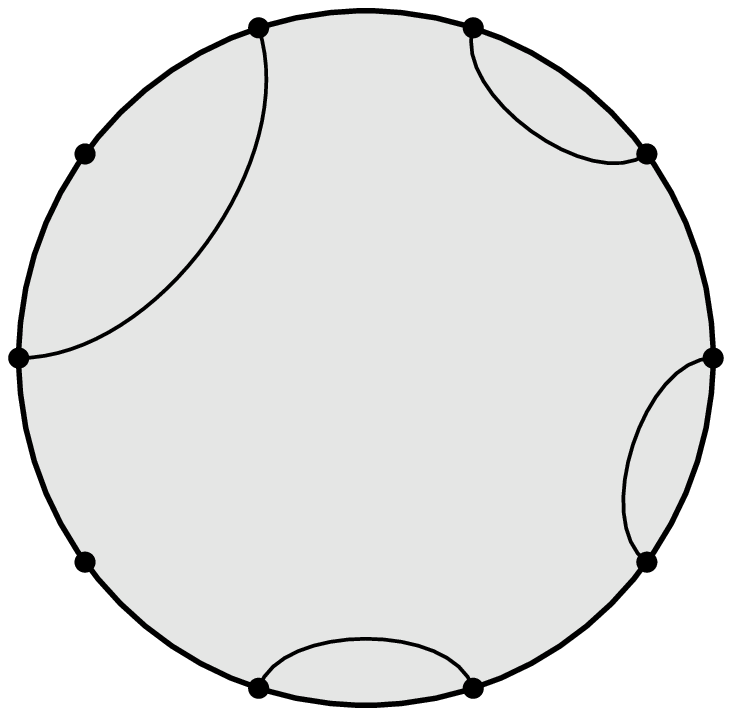}\put(0,43){$\varphi_1$}\put(-8,69){$\varphi_2$}
\put(-34,88){$\varphi_3$}\put(-63,88){$\varphi_4$}\put(-90,69){$\varphi_5$}
\put(-98,43){$\varphi_6$}\put(-90,17){$\varphi_7$}\put(-63,-2){$\varphi_8$}
\put(-34,-2){$\varphi_9$}\put(-8,17){$\varphi_{10}$}
\put(-60,50){$\theta\in(\theta_2;\theta_3)$}
\bigskip

\includegraphics[scale=0.4]{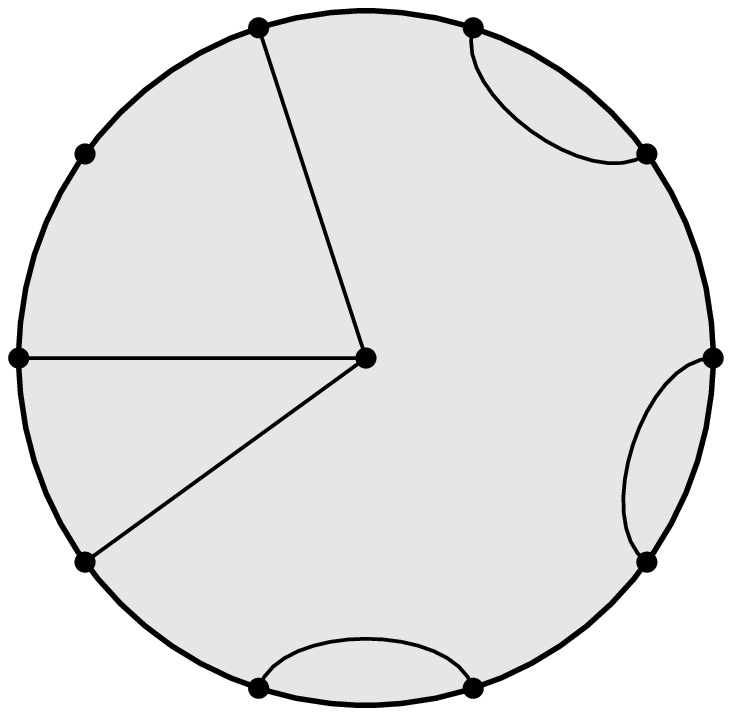}\put(0,43){$\varphi_1$}\put(-8,69){$\varphi_2$}
\put(-34,88){$\varphi_3$}\put(-63,88){$\varphi_4$}\put(-90,69){$\varphi_5$}
\put(-98,43){$\varphi_6$}\put(-90,17){$\varphi_7$}\put(-63,-2){$\varphi_8$}
\put(-34,-2){$\varphi_9$}\put(-8,17){$\varphi_{10}$}
\put(-45,50){$\theta=\theta_3$}
\hskip1cm
\includegraphics[scale=0.4]{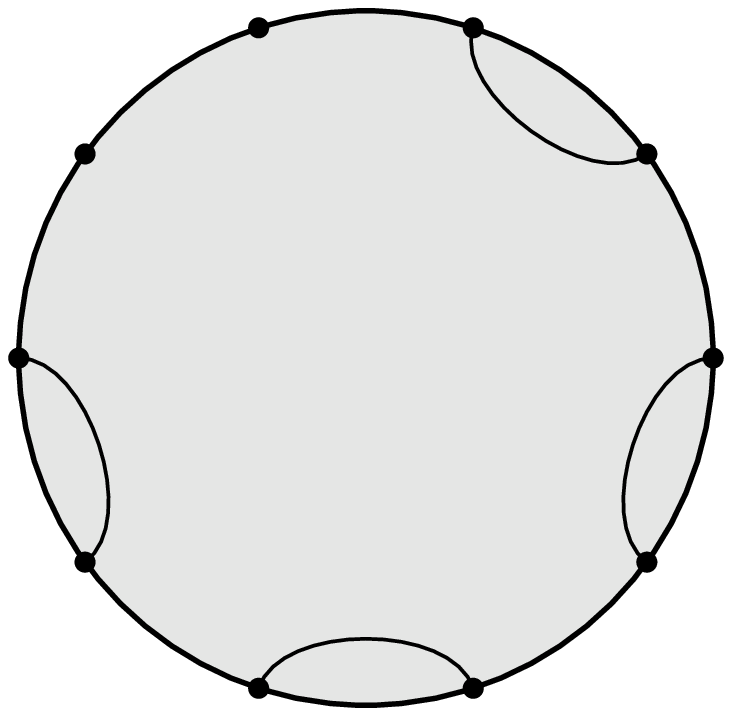}\put(0,43){$\varphi_1$}\put(-8,69){$\varphi_2$}
\put(-34,88){$\varphi_3$}\put(-63,88){$\varphi_4$}\put(-90,69){$\varphi_5$}
\put(-98,43){$\varphi_6$}\put(-90,17){$\varphi_7$}\put(-63,-2){$\varphi_8$}
\put(-34,-2){$\varphi_9$}\put(-8,17){$\varphi_{10}$}
\put(-67,50){$\theta\in(\theta_3;\theta_4)$}
\hskip1cm
\includegraphics[scale=0.4]{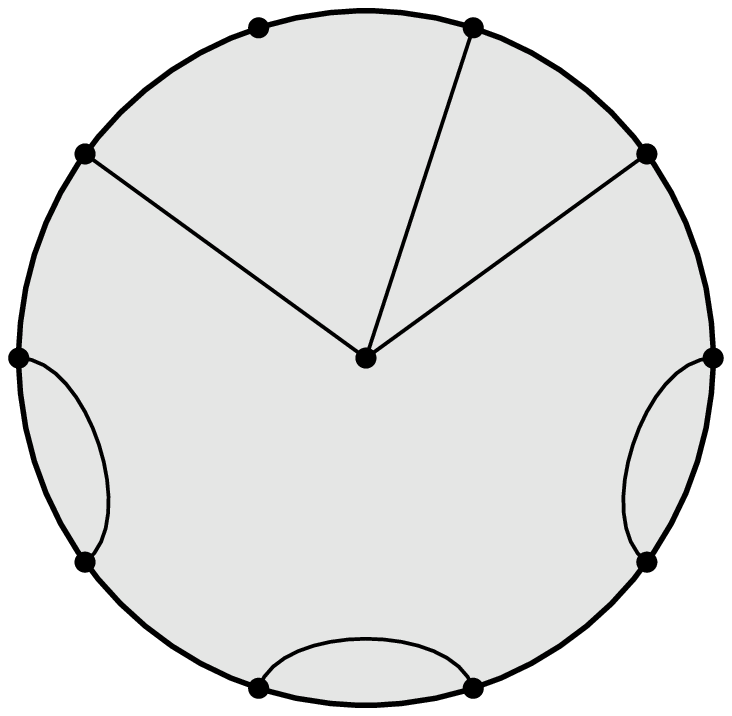}\put(0,43){$\varphi_1$}\put(-8,69){$\varphi_2$}
\put(-34,88){$\varphi_3$}\put(-63,88){$\varphi_4$}\put(-90,69){$\varphi_5$}
\put(-98,43){$\varphi_6$}\put(-90,17){$\varphi_7$}\put(-63,-2){$\varphi_8$}
\put(-34,-2){$\varphi_9$}\put(-8,17){$\varphi_{10}$}
\put(-55,35){$\theta=\theta_4$}
\hskip1cm
\includegraphics[scale=0.4]{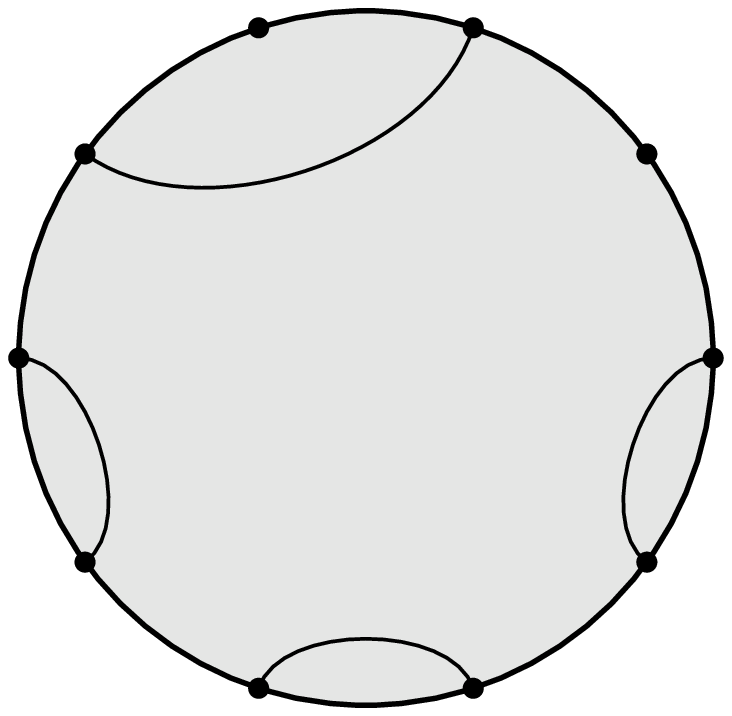}\put(0,43){$\varphi_1$}\put(-8,69){$\varphi_2$}
\put(-34,88){$\varphi_3$}\put(-63,88){$\varphi_4$}\put(-90,69){$\varphi_5$}
\put(-98,43){$\varphi_6$}\put(-90,17){$\varphi_7$}\put(-63,-2){$\varphi_8$}
\put(-34,-2){$\varphi_9$}\put(-8,17){$\varphi_{10}$}
\put(-67,50){$\theta\in(\theta_4;\theta_5)$}

\bigskip
\includegraphics[scale=0.4]{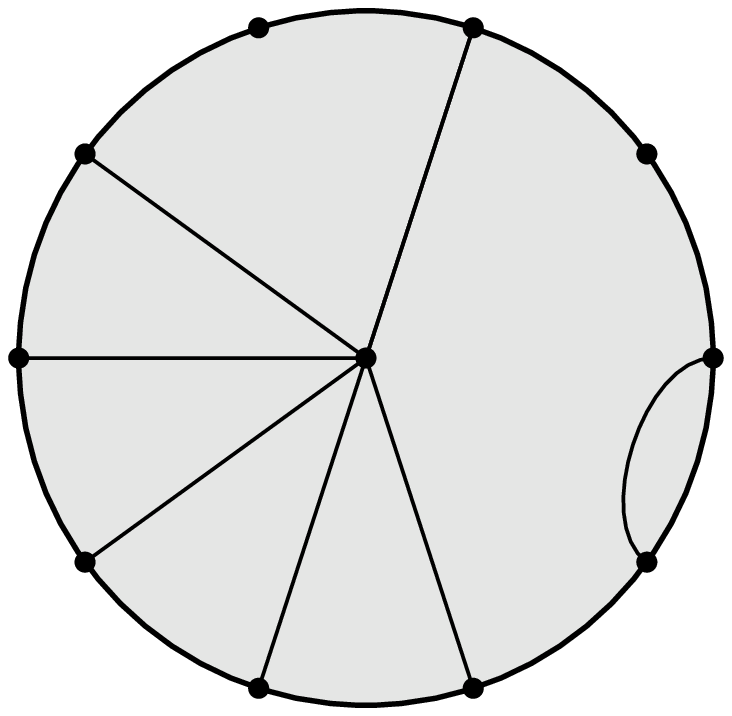}\put(0,43){$\varphi_1$}\put(-8,69){$\varphi_2$}
\put(-34,88){$\varphi_3$}\put(-63,88){$\varphi_4$}\put(-90,69){$\varphi_5$}
\put(-98,43){$\varphi_6$}\put(-90,17){$\varphi_7$}\put(-63,-2){$\varphi_8$}
\put(-34,-2){$\varphi_9$}\put(-8,17){$\varphi_{10}$}
\put(-40,50){$\theta=\theta_5$}
\hskip1cm
\includegraphics[scale=0.4]{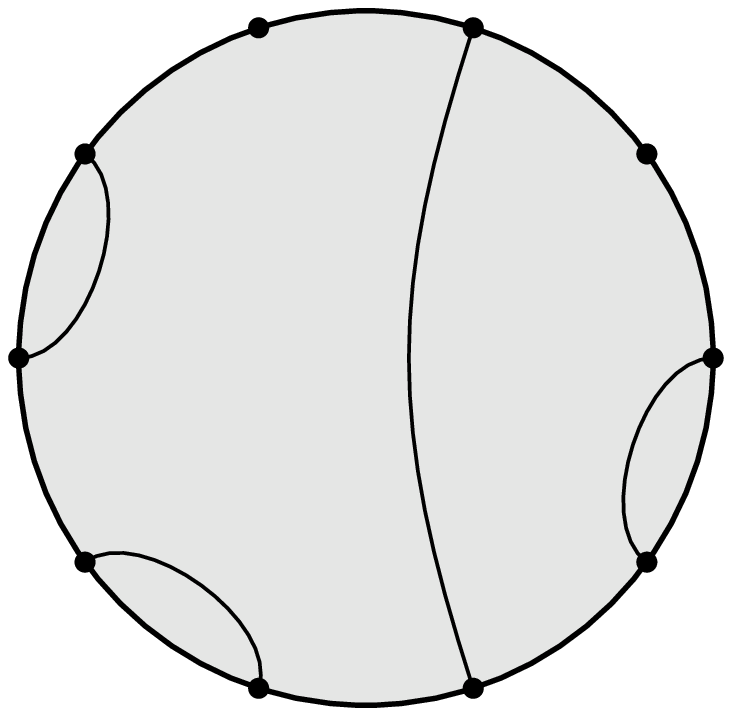}\put(0,43){$\varphi_1$}\put(-8,69){$\varphi_2$}
\put(-34,88){$\varphi_3$}\put(-63,88){$\varphi_4$}\put(-90,69){$\varphi_5$}
\put(-98,43){$\varphi_6$}\put(-90,17){$\varphi_7$}\put(-63,-2){$\varphi_8$}
\put(-34,-2){$\varphi_9$}\put(-8,17){$\varphi_{10}$}
\put(-69,50){$\theta\in(\theta_5;\theta_6)$}
\hskip1cm
\includegraphics[scale=0.4]{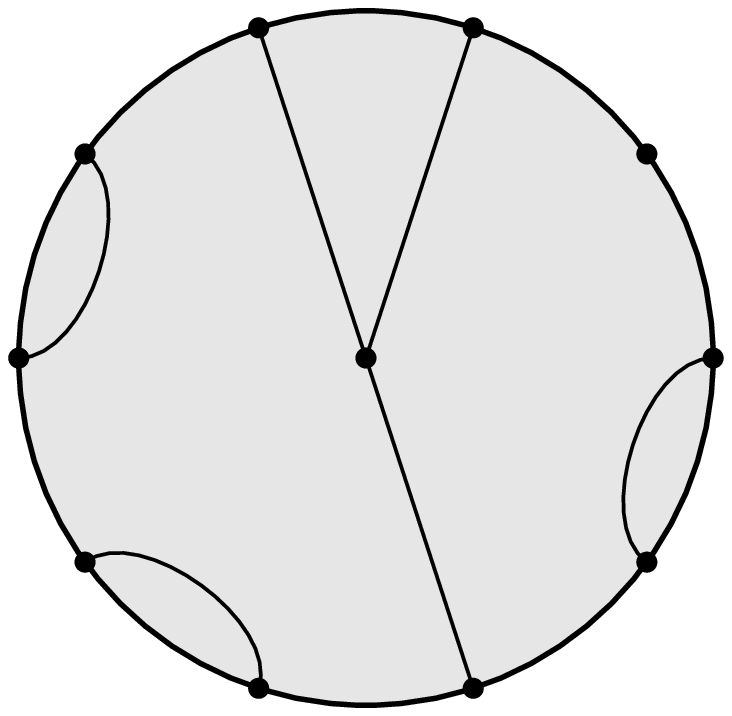}\put(0,43){$\varphi_1$}\put(-8,69){$\varphi_2$}
\put(-34,88){$\varphi_3$}\put(-63,88){$\varphi_4$}\put(-90,69){$\varphi_5$}
\put(-98,43){$\varphi_6$}\put(-90,17){$\varphi_7$}\put(-63,-2){$\varphi_8$}
\put(-34,-2){$\varphi_9$}\put(-8,17){$\varphi_{10}$}
\put(-40,50){$\theta=\theta_6$}
\hskip1cm
\includegraphics[scale=0.4]{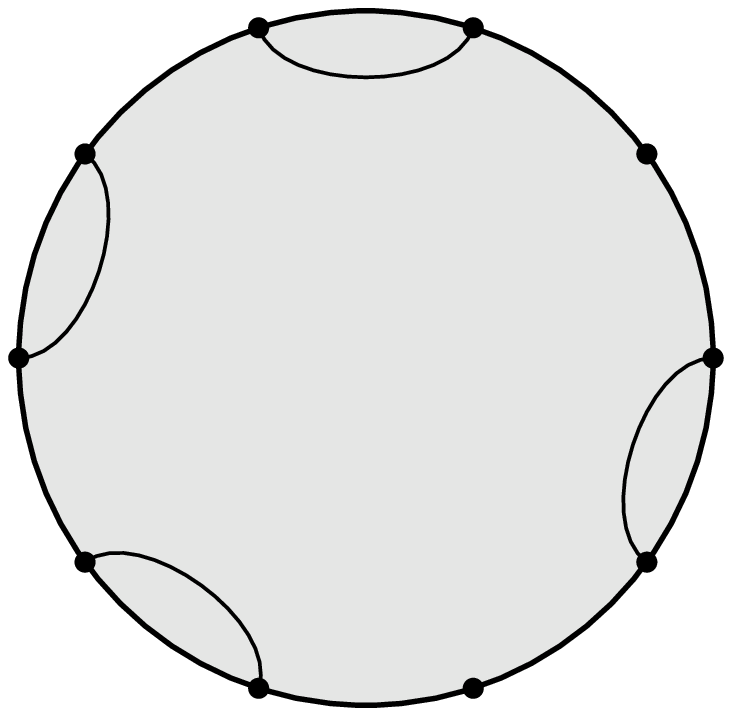}\put(0,43){$\varphi_1$}\put(-8,69){$\varphi_2$}
\put(-34,88){$\varphi_3$}\put(-63,88){$\varphi_4$}\put(-90,69){$\varphi_5$}
\put(-98,43){$\varphi_6$}\put(-90,17){$\varphi_7$}\put(-63,-2){$\varphi_8$}
\put(-34,-2){$\varphi_9$}\put(-8,17){$\varphi_{10}$}
\put(-67,50){$\theta\in(\theta_6;\theta_7)$}
\hskip1cm

\bigskip
\includegraphics[scale=0.4]{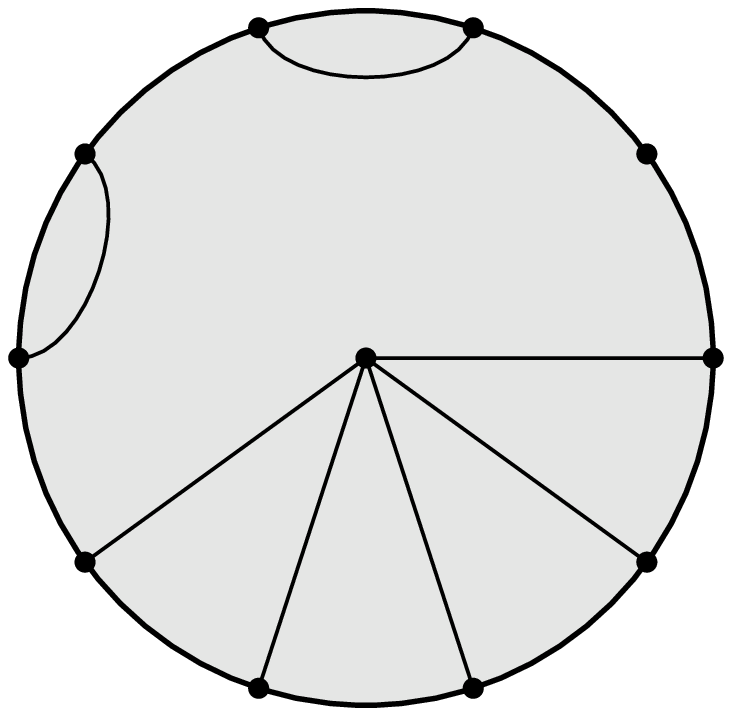}\put(0,43){$\varphi_1$}\put(-8,69){$\varphi_2$}
\put(-34,88){$\varphi_3$}\put(-63,88){$\varphi_4$}\put(-90,69){$\varphi_5$}
\put(-98,43){$\varphi_6$}\put(-90,17){$\varphi_7$}\put(-63,-2){$\varphi_8$}
\put(-34,-2){$\varphi_9$}\put(-8,17){$\varphi_{10}$}
\put(-55,50){$\theta=\theta_7$}
\hskip1cm
\includegraphics[scale=0.4]{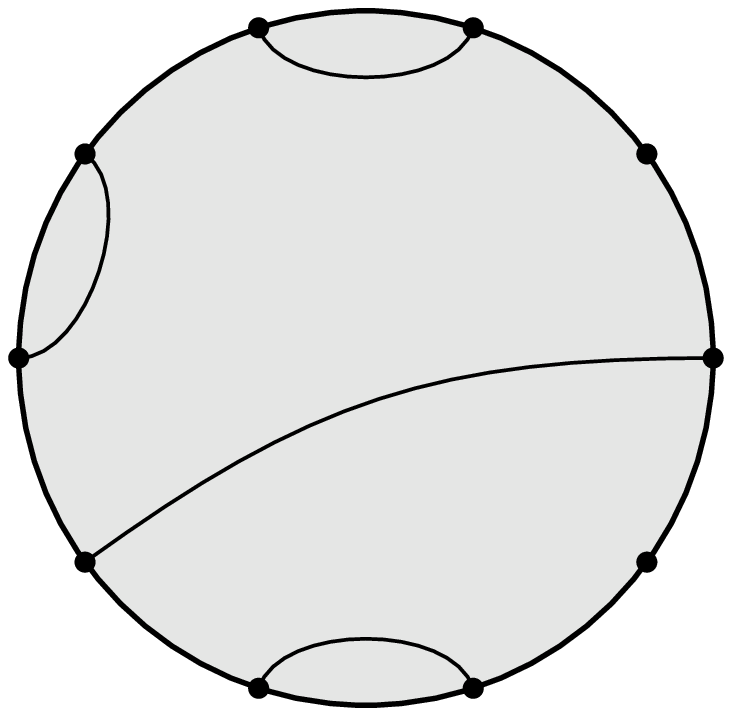}\put(0,43){$\varphi_1$}\put(-8,69){$\varphi_2$}
\put(-34,88){$\varphi_3$}\put(-63,88){$\varphi_4$}\put(-90,69){$\varphi_5$}
\put(-98,43){$\varphi_6$}\put(-90,17){$\varphi_7$}\put(-63,-2){$\varphi_8$}
\put(-34,-2){$\varphi_9$}\put(-8,17){$\varphi_{10}$}
\put(-67,50){$\theta\in(\theta_7;\theta_8)$}
\hskip1cm
\includegraphics[scale=0.4]{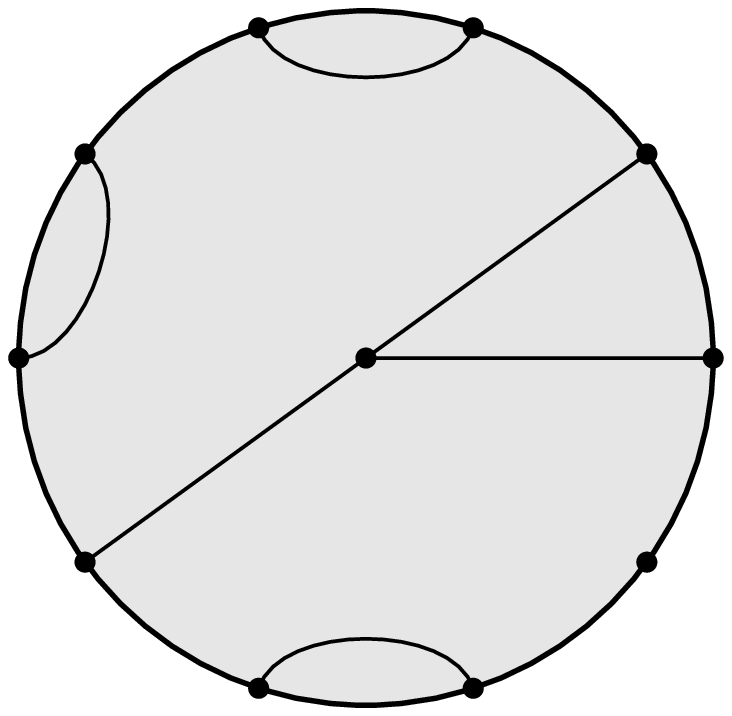}\put(0,43){$\varphi_1$}\put(-8,69){$\varphi_2$}
\put(-34,88){$\varphi_3$}\put(-63,88){$\varphi_4$}\put(-90,69){$\varphi_5$}
\put(-98,43){$\varphi_6$}\put(-90,17){$\varphi_7$}\put(-63,-2){$\varphi_8$}
\put(-34,-2){$\varphi_9$}\put(-8,17){$\varphi_{10}$}
\put(-68,50){$\theta=\theta_8$}
\hskip1cm
\includegraphics[scale=0.4]{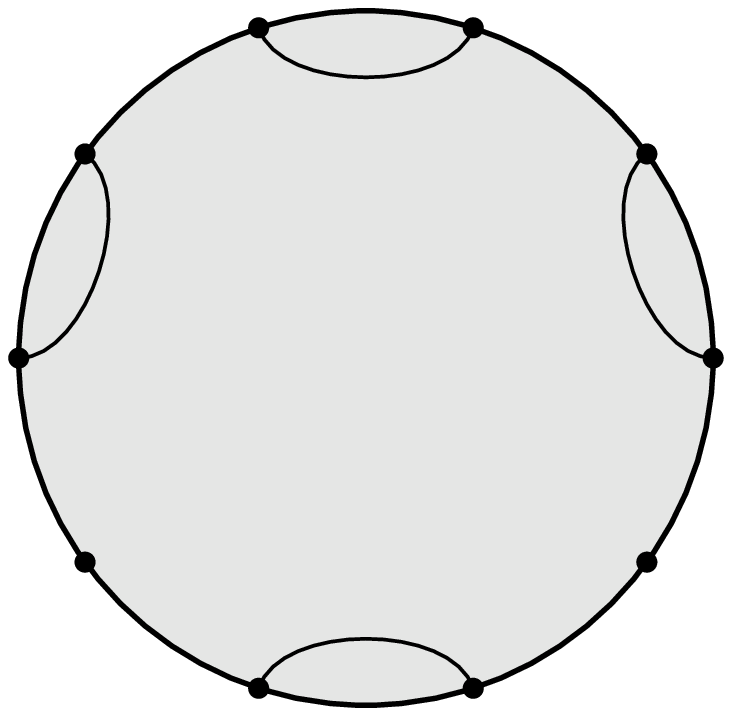}\put(0,43){$\varphi_1$}\put(-8,69){$\varphi_2$}
\put(-34,88){$\varphi_3$}\put(-63,88){$\varphi_4$}\put(-90,69){$\varphi_5$}
\put(-98,43){$\varphi_6$}\put(-90,17){$\varphi_7$}\put(-63,-2){$\varphi_8$}
\put(-34,-2){$\varphi_9$}\put(-8,17){$\varphi_{10}$}
\put(-67,50){$\theta\in(\theta_8;\theta_1)$}

\caption{A rectangular diagram of a surface~$\Pi$
and the topology of the intersections~$\widehat\Pi\cap\mathscr P_\theta$
for various~$\theta$}\label{movieexam-fig}
\end{figure}

\end{exam}

Note that if~$m_{\theta_0}$ is an occupied meridian of~$\Pi$ that contains exactly two vertices of~$\Pi$ and no
vertices from~$\partial\Pi$, then, from the topological point of view, the intersection~$\mathscr P_{\theta_0}\cap\widehat\Pi$ has no singularity,
and there is no event of~$\Psi_\Pi$ at the moment~$\theta_0$. (For instance, such a situation
occurs in the case shown in Figure~\ref{movieexam-fig} for the midpoint of
the interval~$(\theta_7;\theta_8)$. What does change in~$\mathscr P_\theta\cap\widehat\Pi$
at this moment is the relative position of the arc joining~$\varphi_1$ with~$\varphi_7$
and the point~$\mathscr P_\theta\cap\mathbb S^1_{\tau=1}$.)
In particular, a vertical bubble move
applied to~$\Pi$ does not change~$\Psi_\Pi$.

\begin{defi}
Let~$F_1$ and~$F_2$ be two surfaces representing the same movie diagram. A morphism from~$F_1$ to~$F_2$
is said to be \emph{canonical} if it can be represented by a self-homeomorphism of~$\mathbb S^3$ that
fixes~$\mathbb S^1_{\tau=0}$ pointwise and preserve every page~$\mathscr P_\theta$.
Such a morphism is clearly unique, so it will be refereed to as \emph{the} canonical morphism from~$F_1$ to~$F_2$.
\end{defi}

\begin{prop}\label{representing-movie-prop}
\emph{(i)}
Any movie diagram has the form~$\Psi_\Pi$ for some rectangular diagram of a surface~$\Pi$.

\emph{(ii)}
We have~$\Psi_\Pi=\Psi_{\Pi'}$ if and only if the diagrams~$\Pi$ and~$\Pi'$ are related by a finite
sequence of vertical bubble moves and vertical half-wrinkle moves preserving the boundary of the diagram.
Any such sequence induces the canonical morphism~$\widehat\Pi\rightarrow\widehat\Pi'$.
\end{prop}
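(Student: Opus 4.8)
The plan is to handle part (i) by an explicit construction and part (ii) by separating off an easy implication and reducing the hard one to a normal-form argument.

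\emph{Part (i).} Let $\theta_1,\dots,\theta_n\in\mathbb S^1$, listed in cyclic order, be the discontinuity points of $\Psi$, so that $\Psi$ is constant, equal to some non-crossing chord diagram $C_i$, on each open arc $(\theta_i;\theta_{i+1})$. Call $(c,I)$ a \emph{chord segment} if $c$ is a chord and $I$ is a maximal arc $(\theta_i;\theta_j)$ such that $c$ belongs to every $C_\ell$ with $(\theta_\ell;\theta_{\ell+1})\subset I$ and $c$ is not involved in the events at the interior points $\theta_{i+1},\dots,\theta_{j-1}$. To a chord segment $(\{\varphi',\varphi''\},(\theta_i;\theta_j))$ assign a rectangle $[\theta_i;\theta_j]\times[\varphi';\varphi'']$ or $[\theta_i;\theta_j]\times[\varphi'';\varphi']$, choosing the type ($\diagup$ or $\diagdown$) of its two corners on $m_{\theta_i}$ and on $m_{\theta_j}$ to be the ones dictated by the events at $\theta_i$ and $\theta_j$. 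Let $\Pi$ be the collection of all these rectangles. Three things must be verified: pairwise compatibility, which follows from the non-crossing condition (two chord segments whose $\theta$-intervals overlap coexist in some $C_\ell$, so their chords are unlinked, hence the rectangles are nested or disjoint in the $\varphi$-direction); the bound of two free vertices on each meridian and each longitude, which is read off from the possible shapes of admissible events (the vertices on $m_{\theta_i}$ are exactly the points involved in the event at $\theta_i$, of which only the two ``extreme'' ones can be free, and an analogous bookkeeping governs the longitudes); and finally $\Psi_\Pi=\Psi$, which is immediate from the definition of $\Psi_\Pi$, so that $\widehat\Pi$ represents $\Psi$ by Proposition~\ref{another-char-prop}.

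\emph{Part (ii), the ``if'' direction.} One checks directly that a vertical bubble move and a vertical half-wrinkle move preserving $\partial\Pi$ leave $\Psi_\Pi$ unchanged: for bubbles this was already noted in Section~\ref{bubble-sec}, and for a half-wrinkle the newly created rectangle and the reparametrising shift of the adjacent rectangles compensate each other, so that the topology of $\widehat\Pi\cap\mathscr P_\theta$ is unchanged for every $\theta$. Moreover the homeomorphism realising such a move can be chosen to fix $\mathbb S^1_{\tau=0}$ pointwise and to preserve every page $\mathscr P_\theta$ — for bubbles this is the page-preserving $\phi$ produced in Section~\ref{bubble-sec}, which one arranges also to fix $\mathbb S^1_{\tau=0}$, and for half-wrinkles one builds it page by page. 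Hence each move induces the canonical morphism, and composing along the sequence yields the canonical morphism $\widehat\Pi\to\widehat\Pi'$, which by uniqueness of the canonical morphism does not depend on the chosen sequence.

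\emph{Part (ii), the ``only if'' direction.} This is the substance of the proof. Given $\Pi$ with $\Psi_\Pi=\Psi$, the goal is to reduce $\Pi$, by bubble and half-wrinkle reductions preserving $\partial\Pi$, to the diagram $\Pi_\Psi$ constructed in (i); applying the same to $\Pi'$ and using the ``if'' direction then gives the claim. The starting point is the observation that an occupied meridian $m_{\theta_0}$ of $\Pi$ carries no event of $\Psi_\Pi$ precisely when it has exactly two vertices, neither in $\partial\Pi$; the compatibility axiom then forces such a meridian to look locally like a ``turn-around'', a rectangle $[\theta_1;\theta_0]\times[\varphi_a;\varphi_b]$ meeting a rectangle $[\theta_0;\theta_2]\times[\varphi_b;\varphi_a]$ and nothing else. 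Along a fixed chord of $\Psi$ the rectangles of $\Pi$ carrying that chord between two consecutive events form a chain whose internal cut points are turn-arounds of alternating orientation, and two consecutive turn-arounds of the same chord are joined by a single rectangle of that chord (which merely passes through any intervening event meridians), so they constitute the site of a bubble reduction. Removing turn-arounds in pairs reduces the number of interior turn-arounds of each chord to its topological minimum (zero, or one when the part of $\widehat\Pi$ carrying the chord between two events is one-sided); the turn-arounds adjacent to a boundary event are removed instead by half-wrinkle reductions. After all such reductions every occupied meridian carries an event of $\Psi$, the occupied meridians are in bijection with the events, and the rectangles between consecutive events are forced by the chord segments, so the diagram is $\Pi_\Psi$ up to a reparametrisation of the $\theta$- and $\varphi$-circles, which is absorbed into the moves. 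The main obstacle is exactly this reduction: one must show that \emph{every} event-free occupied meridian is one of these two turn-around sites, carry out the parity bookkeeping that pairs interior turn-arounds correctly so that only bubble and half-wrinkle moves (never exchange moves) are needed, and check at each step that $\partial\Pi$ and $\Psi_\Pi$ are preserved and the induced morphism stays canonical.
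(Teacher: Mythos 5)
Your strategy (build a minimal diagram $\Pi_\Psi$ in (i), then prove the ``only if'' half of (ii) by reducing an arbitrary $\Pi$ to that normal form) is genuinely different from the paper's, which instead encodes the extra data needed to recover $\Pi$ from $\Psi_\Pi$ as the function $\theta\mapsto R_\Pi(\theta)$ recording the complementary region containing $\mathscr P_\theta\cap\mathbb S^1_{\tau=1}$, and then classifies the arbitrariness in that function via paths in the tree of regions. Unfortunately your normal form does not exist in general, and this breaks both halves of the argument. In part (i) you assign one rectangle per chord segment, but you never say which of the two complementary arcs $[\varphi';\varphi'']$ or $[\varphi'';\varphi']$ the rectangle occupies; this is not a free choice corner by corner, since compatibility forces the choices to be globally coherent (a rectangle over $[\varphi_3;\varphi_4]\subset(\varphi_1;\varphi_2)$ whose $\theta$-interval partially overlaps that of a rectangle over $[\varphi_1;\varphi_2]$ is \emph{not} compatible with it: their intersection contains vertices of the inner rectangle). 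The coherent choice is exactly the datum $R(\theta)$ above, and for many movie diagrams no choice makes ``one rectangle per chord segment'' work: a movie diagram with no events at all (a chord persisting for all $\theta$) admits no such rectangle, since $\mathbb S^1\times[\varphi';\varphi'']$ is not a rectangle; more generally, whenever the region $R$ is forced to differ at the two ends of an event-free interval, or must sweep out all of $\mathbb S^1$ as in condition (5) of the paper's construction, extra ``turn-around'' meridians carrying no event are unavoidable. So the diagram you reduce to in (ii) --- ``every occupied meridian carries an event, occupied meridians in bijection with events'' --- is in general not a rectangular diagram of a surface, and indeed contradicts your own parenthetical remark that a residual turn-around can survive for parity reasons.

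The second concrete gap is in the cancellation step of (ii). A bubble reduction, by Definition~\ref{bubble-def}, requires that \emph{no} occupied meridian lies strictly between the two meridians being cancelled. Two ``consecutive turn-arounds of the same chord'' joined by a single rectangle that ``passes through intervening event meridians'' (or through turn-arounds of other chords) therefore do not constitute a bubble-reduction site. What is true --- and is the content of the paper's tree argument --- is that within an event-free interval the turn-arounds, read in $\theta$-order irrespective of which chord they cross, form a path in the tree of regions; an innermost backtrack of that path is a pair of adjacent turn-around meridians with nothing between them, and only those are cancellable by one bubble move. You would then still need: (a) that repeatedly cancelling innermost backtracks terminates at the unique geodesic between the two endpoints forced by the adjacent events (this is where the tree structure is essential); (b) that the positions in $\theta$ of the surviving turn-arounds can be normalized, which already requires bubble \emph{creations} followed by reductions, not reductions alone; and (c) a case analysis at boundary events showing the residual binary choice is switched by a half-wrinkle move. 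Your proposal names these obligations but does not discharge them, and as written the reduction would get stuck on non-adjacent turn-around pairs. I would recommend reorganizing the argument around the function $R_\Pi$ and its characterization, which handles (a)--(c) uniformly.
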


\begin{proof}
The fact that vertical bubble and half-wrinkle moves induce the canonical morphisms between
the respective surfaces follows easily the definitions of the moves (Definitions~\ref{bubble-def} and~\ref{wrinkledef}).
So, we need only to explain how to recover a rectangular diagram of a surface~$\Pi$ from~$\Psi_\Pi$, and to show
that the arbitrariness
in this procedure amounts to an application of a sequence of vertical bubble and half-wrinkle moves to~$\Pi$.
We provide a sketch only, since the details are pretty elementary.

We need some preparation before describing the procedure.
For a non-crossing chord diagram~$C$, by \emph{a region of~$C$} we mean
a union of intervals
$$R=(\varphi_1;\varphi_2)\cup\ldots\cup(\varphi_{2k-1};\varphi_{2k})\subset\mathbb S^1$$ such that:
\begin{enumerate}
\item
$R$ is disjoint from any chord of~$C$;
\item
$\{\varphi_2,\varphi_3\},\ldots,\{\varphi_{2k-2};\varphi_{2k-1}\},\{\varphi_{2k},\varphi_1\}$ are chords of~$C$.
\end{enumerate}
The subset~$\{\varphi_1,\varphi_2,\ldots,\varphi_{2k}\}\subset\mathbb S^1$ will be referred to as \emph{the boundary of~$R$}
and denoted~$\partial R$.

In other words, a region of~$C$ is a maximal subset~$R$ of~$\mathbb S^1$ such that, for any two distinct points~$\varphi'$, $\varphi''$ of~$R$,
the addition of~$\{\varphi',\varphi''\}$ to~$C$ yields a non-crossing chord diagram.

Clearly, any two distinct regions of~$C$ are disjoint, and their boundaries are either disjoint or have exactly two points in common,
which form a chord of~$C$. In the latter case the regions are called \emph{neighboring}.

Denote by~$T(C)$ the graph whose vertices are regions of~$C$, and edges are pairs of neighboring regions.
This graph is obviously a tree. The set of all subsets of~$\mathbb S^1$ that have the form of a union
of finitely many pairwise disjoint open intervals is denoted by~$\mathscr R$.

Let~$\Pi$ be a rectangular diagram of a surface. For every~$\theta_0\in\mathbb S^1$ such that~$m_{\theta_0}$ is not
an occupied meridian of~$\Pi$, define~$R_\Pi(\theta_0)$ to be the region~$\Omega(\theta_0)\cap\mathbb S^1_{\tau=0}$
of~$\Psi_\Pi(\theta_0)$,
where~$\Omega(\theta_0)$ is the connected component of~$\mathscr P_{\theta_0}\setminus\widehat\Pi$
that contains the point~$\mathscr P_{\theta_0}\cap\mathbb S^1_{\tau=1}$. One can see that the following equality
is an equivalent characterization of~$R_\Pi(\theta_0)$:
\begin{equation}\label{intersection-eq}
\{\theta_0\}\times R_\Pi(\theta_0)=m_{\theta_0}\setminus\bigcup_{r\in\Pi}r.
\end{equation}
The function~$R_\Pi$ is clearly constant on every open interval between two occupied meridians of~$\Pi$.
We extend~$R_\Pi$ to be a left continuous map~$\mathbb S^1\rightarrow\mathscr R$ (the set~$\mathscr R$
is endowed with a discrete topology).

The diagram~$\Pi$ can be uniquely recovered from the union~$\bigcup_{r\in\Pi}r$. Indeed,
$\Pi$ is the set of all maximal rectangles (with respect to inclusion) contained in~$\bigcup_{r\in\Pi}r$.
Therefore, it can also be uniquely recovered from~$R_\Pi$. Indeed, due to~\eqref{intersection-eq},
the union~$\bigcup_{r\in\Pi}r$ is the closure
of~$\mathbb T^2\setminus\bigcup_{\theta\in\mathbb S^1}\bigl(\{\theta\}\times R_\Pi(\theta)\bigr)$.

Suppose that~$m_{\theta_0}$ is an occupied meridian of~$\Pi$ containing at least three vertices of~$\Pi$.
Then~$\Psi_\Pi(\theta_0)\mapsto\Psi_\Pi(\theta_0+0)$ is an admissible event involving at least three
points of~$\mathbb S^1$. One can see that there are unique regions of~$\Psi_\Pi(\theta_0)$
and~$\Psi_\Pi(\theta_0+0)$ whose closure contains all the points of~$\mathbb S^1$ involved in the event,
and these regions are~$R_\Pi(\theta_0)$ and~$R_\Pi(\theta_0+0)$, respectively.

Now suppose that a meridian~$m_{\theta_0}$ contains exactly two vertices of~$\Pi$, and they
belong to~$\partial\Pi$.
This means that the event~$\Psi_\Pi(\theta_0)\mapsto\Psi_\Pi(\theta_0+0)$ either has the form
$\{\varphi',\varphi''\}\leadsto\varnothing$ or~$\varnothing\leadsto\{\varphi',\varphi''\}$.
In the former case, there are two regions of~$\Psi_\Pi(\theta_0)$ whose
boundary contain~$\varphi'$ and~$\varphi''$, and one of them is~$R_\Pi(\theta_0)$. By applying
a vertical half-wrinkle creation move keeping~$\Psi_\Pi$ unaltered we can change~$R_\Pi(\theta_0)$ to
the other one. The region~$R_\Pi(\theta_0+0)$ is prescribed by~$\Psi_\Pi$, and this is the unique
region of~$\Psi_\Pi(\theta_0+0)$ that contains~$\varphi'$ and~$\varphi''$.\

Similarly, if the event~$\Psi_\Pi(\theta_0)\mapsto\Psi_\Pi(\theta_0+0)$ has the form~$\varnothing\leadsto
\{\varphi',\varphi''\}$, then~$R_\Pi(\theta_0)$ is prescribed by~$\Psi_\Pi$, whereas
there are two regions of~$\Psi_R(\theta_0+0)$ that are eligible for~$R_\Pi(\theta_0+0)$,
and the choice can be changed by a vertical half-wrinkle creation move applied to~$\Pi$.

We are ready to give a recipe for constructing a rectangular diagram of a surface representing a given movie diagram.
Let~$\Psi$ be an arbitrary movie diagram. Pick a function~$R:\mathbb S^1\rightarrow\mathscr R$
having the following properties:
\begin{enumerate}
\item
$R$ is left continuous and has finitely many discontinuity points;
\item
$R(\theta)$ is a region of~$\Psi(\theta)$ for all~$\theta\in\mathbb S^1$;
\item
if no admissible event of~$\Psi$ occurs at~$\theta_0\in\mathbb S^1$
and~$R(\theta_0)\ne R(\theta_0+0)$, then~$R(\theta_0)$ and~$R(\theta_0+0)$
are neighboring regions of~$\Psi(\theta_0)$;
\item
if an admissible event of~$\Psi$ occurs at~$\theta_0$, $\theta_0\in\mathbb S^1$,
then~$\overline{R(\theta_0)}$
and~$\overline{R(\theta_0+0)}$ contain all points involved
in the event;
\item
we have~$\overline{\bigcup_{\theta\in\mathbb S^1}R(\theta)}=\mathbb S^1$.
\end{enumerate}
To see that such~$R$ does exist, let~$\theta',\theta''$ be the moments of two successive events of~$\Psi$.
We start by defining~$R(\theta'+0)$ and~$R(\theta'')$ to comply with Condition~(4) above.
These are two regions of~$\Psi(\theta'')=\Psi(\theta'+0)$, and~$R$ can be defined
on the whole interval~$(\theta';\theta'']$ to satisfy Conditions~$(1)-(3)$ due
to the fact that the graph~$T(\Psi(\theta''))$ is connected, and~$\Psi(\theta)$ is
constant for~$\theta\in(\theta';\theta'']$. This is done independently
for all intervals between any two successive events.

To see that Condition~(5) can also be met by~$R$, note that the graph~$T(C)$ is
connected for any non-crossing chord diagram~$C$, and the closure of the union
of all regions of~$C$ is the entire circle~$\mathbb S^1$. So, if~$R$ satisfies Conditions~(1)--(4), but not~(5),
we choose an interval~$[\theta_1;\theta_2)$ on which~$\Psi$ is constant and modify the values of~$R$ in it
so as to let~$R(\theta)$, $\theta\in[\theta_1;\theta_2)$ visit all vertices of the tree~$T(\Psi(\theta_1))$.
This can clearly be done without violating Conditions~(1)--(4).

Now we let~$\Pi$ be the collection of maximal rectangles contained in the closure of~$\mathbb T^2\setminus\bigcup_{\theta\in\mathbb S^1}\bigl(\{\theta\}\times R_\Pi(\theta)\bigr)$. We leave it to the reader
to verify that~$\Pi$ is a rectangular diagram of a surface such that~$\Psi=\Psi_\Pi$.

The arbitrariness in the construction of~$\Pi$ has the following two sources. First, if at some moment~$\theta_0$,
an admissible event of the form~$\{\varphi',\varphi''\}\leadsto\varnothing$ (or, respectively,
$\varnothing\leadsto\{\varphi',\varphi''\}$) occurs in~$\Psi$, then~$R(\theta_0)$ (respectively,
$R(\theta_0+0)$) can be chosen in two different ways. As mentioned above, a vertical half-wrinkle move can be used
to change the choice.

Second, there is a large freedom in
defining~$R$ on an interval~$(\theta';\theta'']$ between two successive events of~$\Psi$,
provided that~$R(\theta'+0)$ and~$R(\theta'')$ are already fixed. Different choices
correspond to different paths from~$R(\theta'+0)$ to~$R(\theta'')$ in the graph~$T(\Psi(\theta''))$.
Since this graph is a tree, different choices are related
by a sequence of the following operations and their inverses.

Let~$\Psi$ and~$R$ be constant on an interval~$[\theta_1;\theta_2]$, and let~$[\theta_3;\theta_4]$
be a subinterval of~$(\theta_1;\theta_2)$. Change the value of~$R$ on~$(\theta_3;\theta_4]$
to any region of~$\Psi(\theta_1)$ which is neighboring to~$R(\theta_1)=R(\theta_2)$.

One can see that such an operation results in a vertical bubble creation move performed on~$\Pi$.
This completes the proof of the proposition.
\end{proof}

It is often useful to look at a movie diagram~$\Psi$ `from inside a surface representing~$\Psi$'.
Let~$F$ be such a surface. We denote by~$\mathscr F_F$ the singular foliation induced on~$F$ by the open
book decomposition~$\{\mathscr P_\theta\}_{\theta\in\mathbb S^1}$. More precisely,
the foliation~$\mathscr F_F$ is defined on~$F\setminus\mathbb S^1_{\tau=0}$.
The leaves of~$\mathscr F_F$ are connected components of the intersections~$F\cap\mathscr(P_\theta\setminus\mathbb S^1)$,
$\theta\in\mathbb S^1$.

The points in~$F\cap\mathbb S^1_{\tau=0}$, where~$\mathscr F_F$ is not defined,
are referred to as \emph{vertices of~$\mathscr F_F$}.
Unless otherwise specified, we adopt a purely topological point of view on~$\mathscr F_F$ and regard all points~$p\in F\setminus\mathbb S^1_{\tau=0}$
such that a sufficiently small neighborhood of~$p$ is foliated by open arcs
as \emph{regular}. So, \emph{singularities of~$\mathscr F_F$} are such points in~$F\setminus\mathbb S^1_{\tau=0}$
that do not have an open neighborhood foliated by open arcs. Leaves containing a singularity
are called \emph{singular} (and otherwise \emph{regular}). Connected components of the set of regular
points of a singular leaf are called \emph{separatrices}. The behavior of~$\mathscr F_F$
near vertices and some singularities is shown in Figure~\ref{sing-pic}
\begin{figure}[ht]
\includegraphics{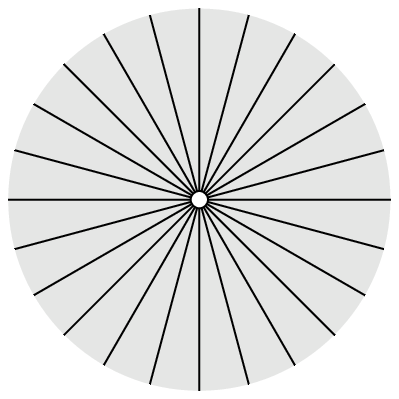}
\includegraphics{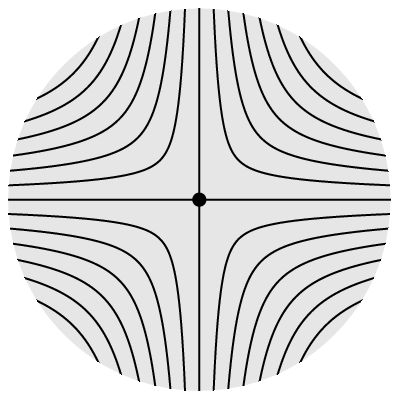}
\includegraphics{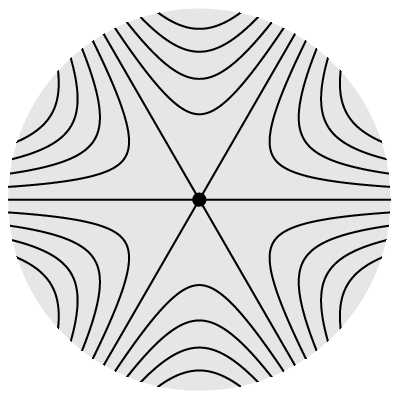}

\includegraphics{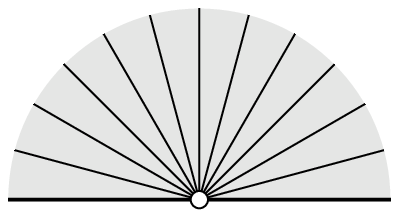}
\includegraphics{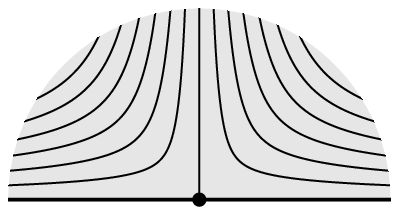}
\includegraphics{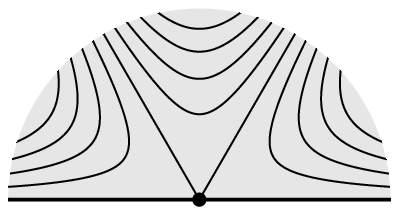}
\caption{Foliation~$\mathscr F_F$ around vertices and singularities}\label{sing-pic}
\end{figure}

If~$F$ and~$F'$ are two surfaces representing the same movie diagram~$\Psi$, then
there is a homeomorphism representing the canonical morphism~$F\rightarrow F'$
that takes~$\mathscr F_F$ to~$\mathscr F_{F'}$. In other words,
the topological structure of~$\mathscr F_F$ does not depend on the concrete choice of~$F$.
For this reason we will use the notation~$\mathscr F_\Psi$ for any foliation~$\mathscr F_F$
with~$F$ representing~$\Psi$.

Let~$\Psi$ be a movie diagram.
Quite clearly, the topological type of~$\mathscr F_\Psi$ with a little more data allows
to completely recover~$\Psi$. The additional data include the value of~$\varphi$
at vertices of~$\mathscr F_\Psi$, the value of~$\theta$ at the singularities of~$\mathscr F_\Psi$,
and the coorientation of~$\mathscr F_\Psi$ defined by~$d\theta$. So, to
illustrate an alteration of~$\Psi$ it is sometimes more convenient to
show the alteration of~$\mathscr F_\Psi$ (and of the additional data).

\section{Moves of movie diagrams}\label{movie-moves-sec}

\begin{defi}
Let~$\Psi$ and~$\Psi'$ be movie diagrams.
By \emph{a morphism} from~$\Psi$ to~$\Psi'$ we mean a maximal collection~$\chi$ of
morphisms of surfaces~$F\rightarrow F'$ representing~$\Psi$ and~$\Psi'$, respectively,
such that if~$\mu:F\rightarrow F'$ and~$\mu_1:F_1\rightarrow F_1'$ are
two morphisms belonging to~$\chi$, then~$\sigma'\circ\mu=\mu_1\circ\sigma$,
where~$\sigma:F\rightarrow F_1$ and~$\sigma':F'\rightarrow F_1'$
are the canonical morphisms.

By \emph{a move} of movie diagrams we mean a pair~$(\Psi,\Psi')$
of movie diagrams assigned with a morphism~$\chi$ from~$\Psi$ to~$\Psi'$.
For such a move we use the notation~$\Psi\mapsto\Psi'$ or~$\Psi\xmapsto\chi\Psi'$
or~$\Psi\xmapsto\mu\Psi'$ if~$\mu\in\chi$.
\end{defi}

Let~$F,F'\subset\mathbb S^3$ be two surfaces, and let~$\phi$ be a self-homeomorphism of~$\mathbb S^3$
taking~$F$ to~$F'$. If~$\phi$ is identical outside of a $3$-ball~$B$ intersecting
each of~$F$ and~$F'$ in an open disc or an open half-disc we call~$\phi$ \emph{an elementary isotopy
from~$F$ to~$F'$ supported on~$B$}.

We define below several types of moves of movie diagrams.
In each case except the one of a rescaling move, for which the corresponding morphism is
described explicitly, the morphism assigned to the move is induced either by an elementary isotopy or by
a composition of two elementary
isotopies supported on two disjoint $3$-balls. So, to define the morphisms assigned to the moves it suffices
to specify the altered part of the respective surfaces.

\begin{defi}\label{finger-def}
Let~$\Psi$ and~$\Psi'$ be two movie diagrams such that there exist pairwise distinct~$\varphi_1,\varphi_2,\varphi_3,\varphi_4\in\mathbb S^1$
and distinct~$\theta_1,\theta_2\in\mathbb S^1$ satisfying the following conditions:
\begin{enumerate}
\item
the intersection of~$[\varphi_1;\varphi_2]$ with~$\Phi(\Psi)$ is empty;
\item
there are no events of~$\Psi$ in the interval~$[\theta_1;\theta_2]$
\item
if~$\theta\in(\theta_1;\theta_2]$, then
$\Psi'(\theta)$ is obtained from~$\Psi(\theta)$ by replacing~$\{\varphi_3,\varphi_4\}$ with~$\{\varphi_1,\varphi_4\}$ and~$\{\varphi_2,\varphi_3\}$;
\item
if~$\theta\in(\theta_2;\theta_1]$, then
$\Psi'(\theta)$ is obtained from~$\Psi(\theta)$ by adding the chord~$\{\varphi_1,\varphi_2\}$.
\end{enumerate}
Then we say that~$\Psi\mapsto\Psi'$ is \emph{a finger move}, and~$\Psi'\mapsto\Psi$ is \emph{an inverse finger move}.
Surfaces representing~$\Psi$ and~$\Psi'$ can be chosen to be related by an elementary isotopy;
encircled in dashed line in Figure~\ref{finger-foli-fig} are the intersections of the surfaces with the $3$-ball on which
the elementary isotopy is supported.
\end{defi}
\begin{figure}[ht]
\includegraphics[scale=.6]{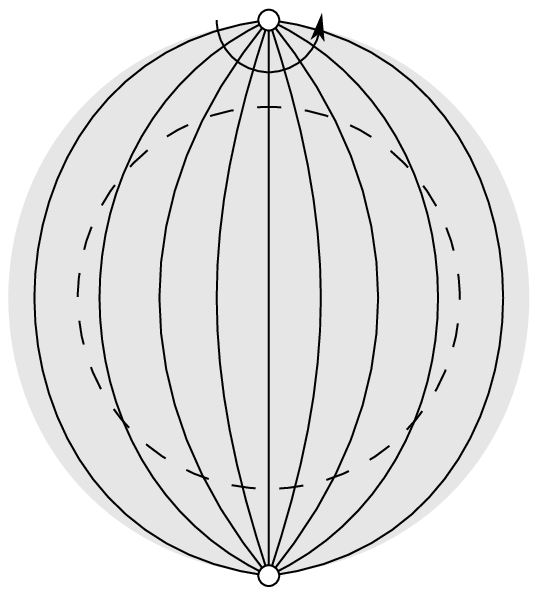}\put(-53,-5){$\varphi_4$}
\put(-53,105){$\varphi_3$}
\hskip1cm
\raisebox{50pt}{$\longrightarrow$}
\hskip1cm
\includegraphics[scale=.6]{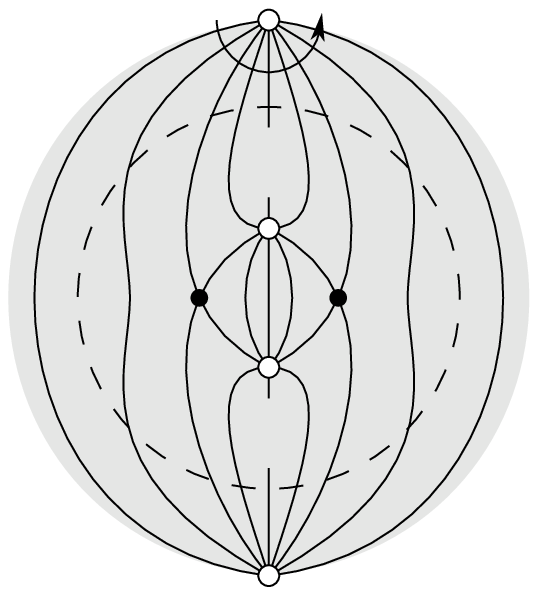}\put(-53,-5){$\varphi_4$}
\put(-53,105){$\varphi_3$}\put(-53,27){$\varphi_1$}\put(-53,73){$\varphi_2$}
\put(-70,48){$\theta_1$}\put(-34,49){$\theta_2$}
\caption{Change of the foliation~$\mathscr F_\Psi$ under a finger move. The arrows
show the coorientation of the foliation}\label{finger-foli-fig}
\end{figure}

\begin{defi}\label{splitting-def}
Let~$\Psi$ and~$\Psi'$ be two movie diagrams such that~$\Phi(\Psi)=\Phi(\Psi')$ and, for some distinct~$\theta_1,\theta_2\in\mathbb S^1$, the following conditions hold:
\begin{enumerate}
\item
for any~$\theta\in(\theta_2;\theta_1]$ we have~$\Psi(\theta)=\Psi'(\theta)$;
\item
there is exactly one event of~$\Psi$ in the interval~$[\theta_1;\theta_2]$;
\item
there are two events~$e_1,e_2$ of~$\Psi'$ at the moments~$\theta_1$ and~$\theta_2$, respectively,
and no events in~$(\theta_1;\theta_2)$;
\item
the non-crossing chord diagram~$\Psi'(\theta_2)$ contains an arc~$\{\varphi_1,\varphi_2\}$
not present in~$\Psi'(\theta_1)$ and~$\Psi'(\theta_2+0)$;
\item
all points of~$\mathbb S^1$ involved in~$e_1$ (respectively, $e_2$)
are contained in~$[\varphi_2;\varphi_1]$ (respectively, $[\varphi_1;\varphi_2]$).
\end{enumerate}
Then we say that~$\Psi\mapsto\Psi'$ is \emph{a splitting of an event},
and~$\Psi'\mapsto\Psi$ is \emph{a merging of events}.
The morphisms assigned to these moves are again induced by an elementary isotopy supported
on a $3$-ball~$B$ that contains no vertices and only those singularities of~$\mathscr F_\Psi,\mathscr F_{\Psi'}$
that correspond to the events in which~$\Psi$ and~$\Psi'$ differ.

A few examples of how the foliation~$\mathscr F_\Psi$ changes under a splitting of an event
is shown in Figure~\ref{sing-split-fig}.
Encircled in dashed line are the intersections of the surfaces with~$B$.
\begin{figure}[ht]
\includegraphics[scale=.6]{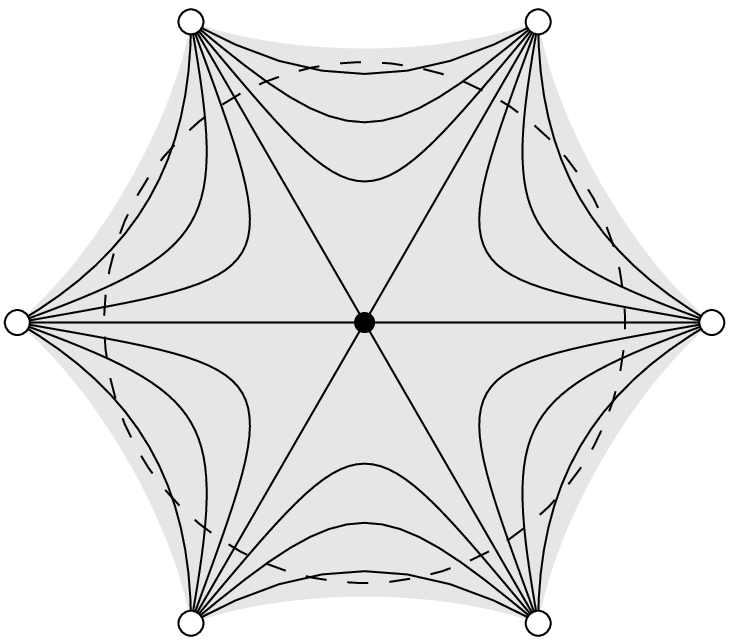}\put(2,53){$\varphi_2$}\put(-137,53){$\varphi_1$}
\hskip1cm
\raisebox{53pt}{$\longrightarrow$}
\hskip1cm
\includegraphics[scale=.6]{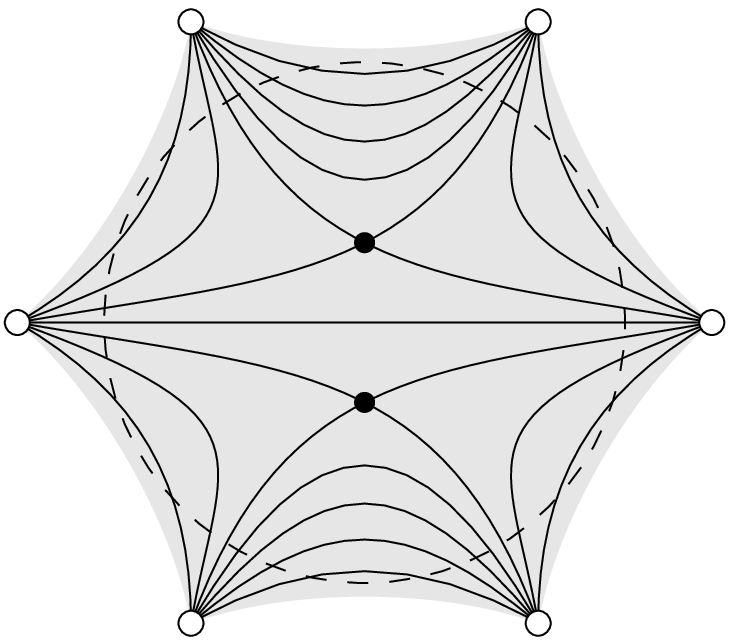}\put(2,53){$\varphi_2$}\put(-137,53){$\varphi_1$}

\vskip.5cm

\includegraphics[scale=.6]{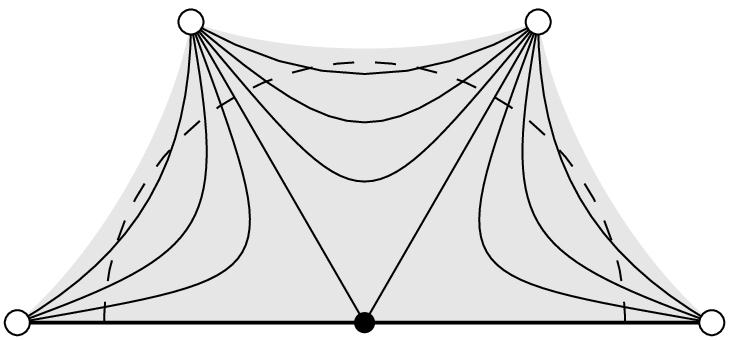}\put(2,1){$\varphi_2$}\put(-137,1){$\varphi_1$}
\hskip1cm
\raisebox{25pt}{$\longrightarrow$}
\hskip1cm
\includegraphics[scale=.6]{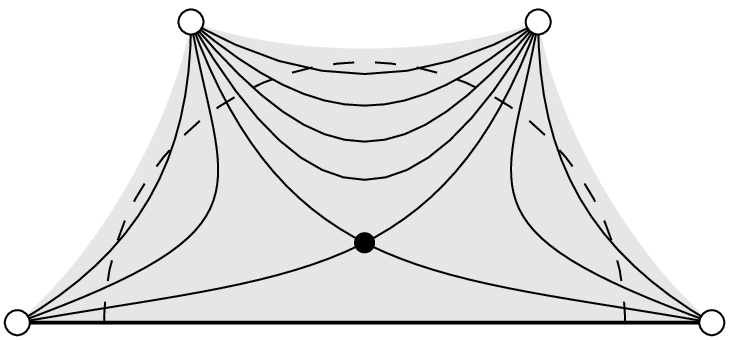}\put(2,1){$\varphi_2$}\put(-137,1){$\varphi_1$}

\vskip.5cm

\includegraphics[scale=.6]{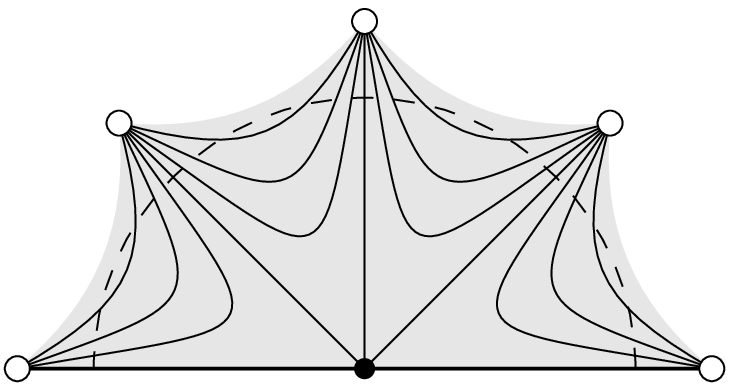}\put(-16,45){$\varphi_2$}\put(-137,1){$\varphi_1$}
\hskip1cm
\raisebox{25pt}{$\longrightarrow$}
\hskip1cm
\includegraphics[scale=.6]{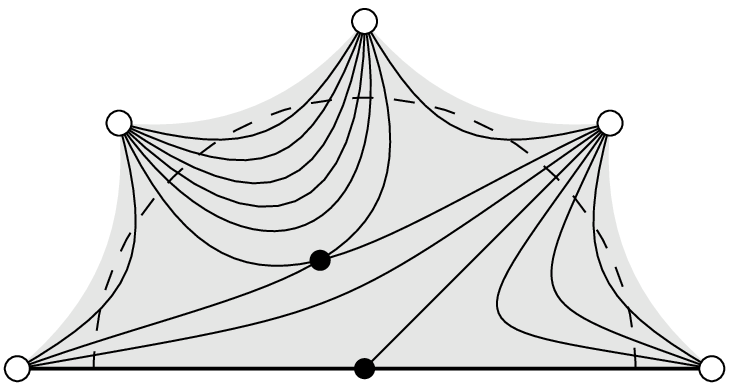}\put(-16,45){$\varphi_2$}\put(-137,1){$\varphi_1$}
\caption{Change of the foliation~$\mathscr F_\Psi$ under a splitting of an event}\label{sing-split-fig}
\end{figure}
\end{defi}

\begin{defi}\label{special-com-def}
Let~$\Psi$ and~$\Psi'$ be two movie diagrams such that, for some~$\theta_1,\theta_2,\theta_3\in\mathbb S^1$,
$\theta_2\in(\theta_1;\theta_3)$,
the non-crossing chord diagrams~$\Psi(\theta)$ and~$\Psi'(\theta)$ coincide for all~$\theta\in(\theta_3;\theta_1]$,
and no event of~$\Psi$ or~$\Psi'$ occur in~$(\theta_1;\theta_2)\cup(\theta_2;\theta_3)$.
Suppose also that there are pairwise distinct~$\varphi_1,\varphi_2,\varphi_3,\varphi_6,\varphi_5,\varphi_4\in\mathbb S^1$
following on~$\mathbb S^1$ in the same or opposite circular order as listed
(observe that it is not the natural one)
such that
the following events occur at the moments~$\theta_1,\theta_2,\theta_3$:\\[2mm]
\centerline{\begin{tabular}{|c|c|c|}
\hline
&$\Psi$&$\Psi'$\\\hline
$\theta_1$&no event&$\{\varphi_3,\varphi_4\},\{\varphi_6,\varphi_5\}\leadsto
\{\varphi_3,\varphi_6\},\{\varphi_5,\varphi_4\}$\\\hline
$\theta_2$&$\{\varphi_1,\varphi_2\},\{\varphi_3,\varphi_4\}\leadsto
\{\varphi_1,\varphi_4\},\{\varphi_2,\varphi_3\}$&$\{\varphi_1,\varphi_2\},\{\varphi_3,\varphi_6\}\leadsto
\{\varphi_1,\varphi_6\},\{\varphi_2,\varphi_3\}$
\\\hline
$\theta_3$&$\{\varphi_1,\varphi_4\},\{\varphi_6,\varphi_5\}\leadsto
\{\varphi_1,\varphi_6\},\{\varphi_5,\varphi_4\}$&no event\\\hline
\end{tabular}}
\\[2mm]Then~$\Psi\mapsto\Psi'$ is called \emph{a special commutation}.

We also use this term for a move whose definition is similar with the only distinction that~$\varphi_2$
is not involved. Thus, the events in~$\Psi$ and~$\Psi'$ at the moments~$\theta_1,\theta_2,\theta_3$ are:
\\[2mm]
\centerline{\begin{tabular}{|c|c|c|}
\hline
&$\Psi$&$\Psi'$\\\hline
$\theta_1$&no event&$\{\varphi_3,\varphi_4\},\{\varphi_6,\varphi_5\}\leadsto
\{\varphi_3,\varphi_6\},\{\varphi_5,\varphi_4\}$\\\hline
$\theta_2$&$\{\varphi_3,\varphi_4\}\leadsto
\{\varphi_1,\varphi_4\}$&$\{\varphi_3,\varphi_6\}\leadsto
\{\varphi_1,\varphi_6\}$
\\\hline
$\theta_3$&$\{\varphi_1,\varphi_4\},\{\varphi_6,\varphi_5\}\leadsto
\{\varphi_1,\varphi_6\},\{\varphi_5,\varphi_4\}$&no event\\\hline
\end{tabular}}
\\[2mm]
The morphism assigned to the special commutation is induced by an elementary isotopy 
supported on a $3$-ball~$B$ similarly to
Definition~\ref{splitting-def}. Figure~\ref{spec-com-fig}
illustrates the difference between the foliations~$\mathscr F_\Psi$
and~$\mathscr F_\Psi'$, where the intersections of the surfaces with~$B$
are encircled in dashed line.
\end{defi}
\begin{figure}[ht]
\includegraphics[scale=.6]{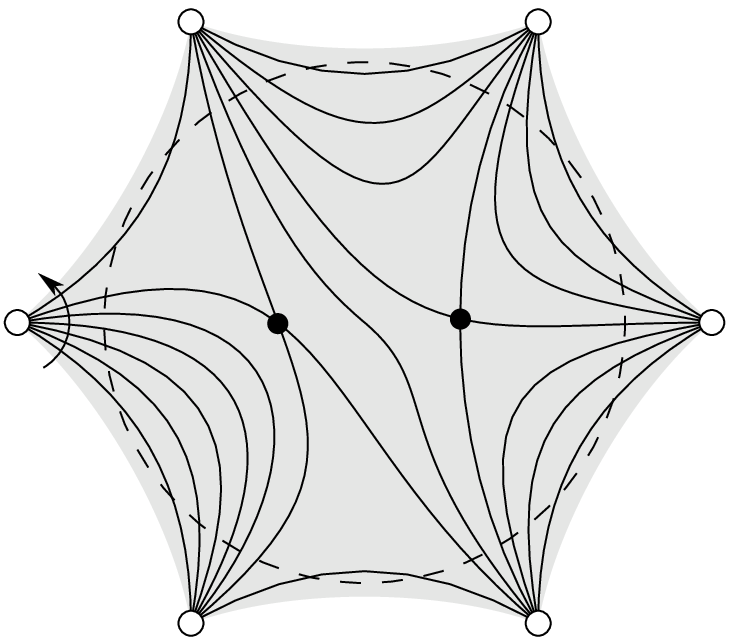}\put(-35,-6){$\varphi_1$}
\put(-98,-6){$\varphi_6$}\put(1,54){$\varphi_2$}\put(-35,114){$\varphi_3$}
\put(-98,114){$\varphi_4$}\put(-138,54){$\varphi_5$}
\hskip1cm
\raisebox{53pt}{$\longrightarrow$}
\hskip1cm
\includegraphics[scale=.6]{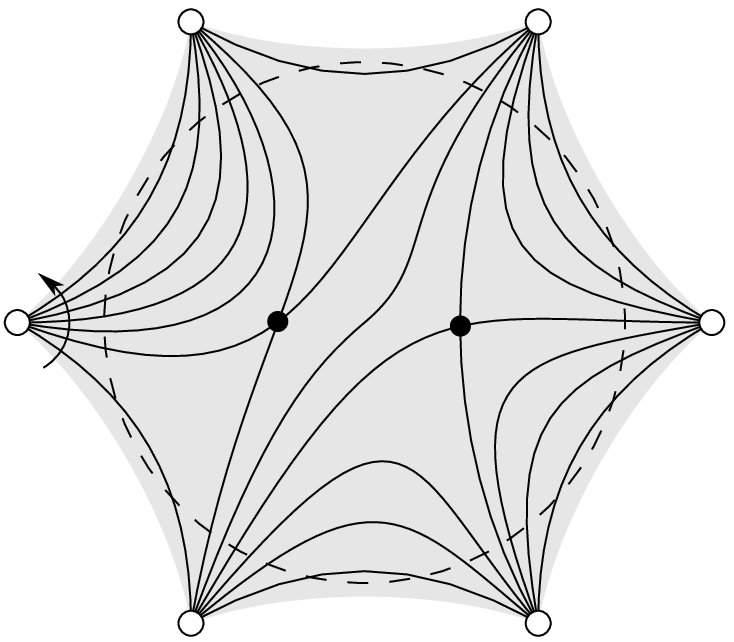}\put(-35,-6){$\varphi_1$}
\put(-98,-6){$\varphi_6$}\put(1,54){$\varphi_2$}\put(-35,114){$\varphi_3$}
\put(-98,114){$\varphi_4$}\put(-138,54){$\varphi_5$}

\vskip.5cm
\includegraphics[scale=.6]{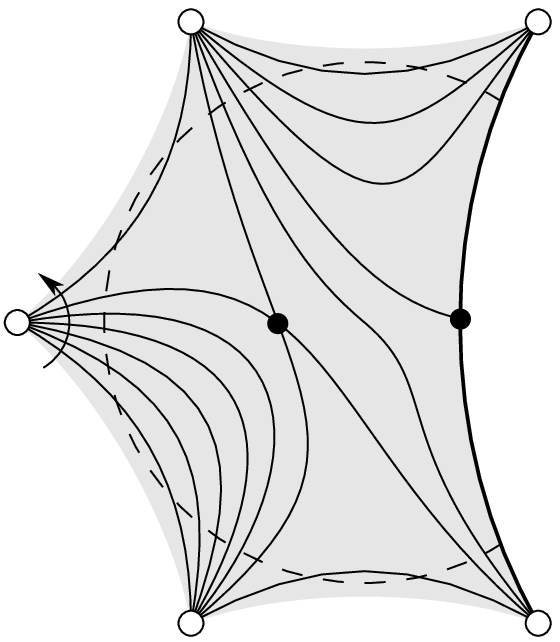}\put(-5,-6){$\varphi_1$}
\put(-68,-6){$\varphi_6$}\put(-5,114){$\varphi_3$}
\put(-68,114){$\varphi_4$}\put(-108,54){$\varphi_5$}
\hskip1cm
\raisebox{53pt}{$\longrightarrow$}
\hskip1cm
\includegraphics[scale=.6]{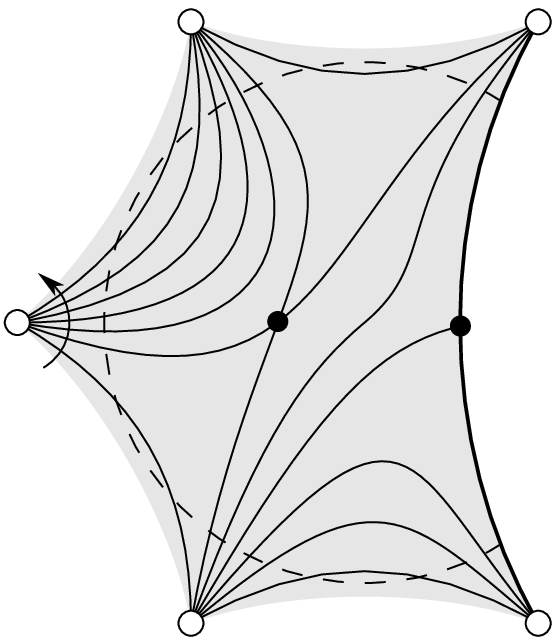}\put(-5,-6){$\varphi_1$}
\put(-68,-6){$\varphi_6$}\put(-5,114){$\varphi_3$}
\put(-68,114){$\varphi_4$}\put(-108,54){$\varphi_5$}
\caption{Change of the foliation under a special commutation}\label{spec-com-fig}
\end{figure}

Two admissible events~$C_1\mapsto C_2$ and~$C_1'\mapsto C_2'$ are considered equal
if they `do the same', that is, remove the same chords, and add the same chords:
$$C_1\setminus C_2=C_1'\setminus C_2',\quad
C_2\setminus C_1=C_2'\setminus C_1'.$$

\begin{defi}\label{coummutation-def}
Let~$\Psi$ and~$\Psi'$ be two movie diagrams such that, for some distinct~$\theta_1,\theta_3\in\mathbb S^1$,
the following holds:
\begin{enumerate}
\item
$\Psi(\theta)=\Psi'(\theta)$ for all~$\theta\in[\theta_3;\theta_1]$;
\item
each of~$\Psi$ and~$\Psi'$ has exactly two events in the interval~$(\theta_1;\theta_3)$,
these events for~$\Psi'$ are the same as for~$\Psi$, but follow in the opposite order.
\end{enumerate}
Then we say that~$\Psi\mapsto\Psi'$ is \emph{a \emph(non-special\emph) commutation}.

The morphism assigned to a commutation~$\Psi\mapsto\Psi'$ is a composition of two
morphisms that can be represented by elementary isotopies supported
on two $3$-balls which are disjoint from one another and from~$\mathbb S^1_{\tau=0}$
such that each of them
contains exactly one singularity of each of~$\mathscr F_\Psi$ and~$\mathscr F_\Psi'$.
\end{defi}

\begin{defi}\label{rescaling-def}
Let~$\Psi$ be a movie diagram, and let~$F\subset\mathbb S^3$ be a surface representing~$\Psi$.
Let also~$f,g$ be orientation preserving homeomorphisms of~$\mathbb S^1$. There is a unique
movie diagram~$\Psi'$ represented by~$(f*g)(F)$.
This diagram can be formally written as
$$\Psi'=(S^2f)\circ\Psi\circ g^{-1},$$
where~$S^2f$ is the self-homeomorphism of the symmetric square~$S^2\mathbb S^1$ induced by~$f$.

In this situation, the passage~$\Psi\xmapsto{[f*g]}\Psi'$ is called \emph{a rescaling}.
\end{defi}

\begin{lemm}\label{decomp-of-movie-moves-lem}
Let~$\Psi\xmapsto\chi\Psi'$ be one of the moves
introduced by any of the Definitions~\ref{finger-def}--\ref{rescaling-def}.
Then there exist rectangular diagrams of surfaces~$\Pi$ and~$\Pi'$
representing~$\Psi$ and~$\Psi'$, respectively, and a sequence of basic moves not including half-wrinkle moves
\begin{equation}\label{pi-seq-eq}
\Pi=\Pi_0\mapsto\Pi_1\mapsto\ldots\mapsto\Pi_k=\Pi'
\end{equation}
such that the move~$\Pi_{i-1}\mapsto\Pi_i$ is fixed on~$\partial\Pi\cap\partial\Pi'$
for all~$i=1,\ldots,k$,
and the morphism~$\widehat\Pi\rightarrow\widehat\Pi'$ obtained
by composing the moves~\eqref{pi-seq-eq} represents~$\chi$.
\end{lemm}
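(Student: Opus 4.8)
The plan is to treat the six families of moves (finger, splitting of an event, special commutation, non-special commutation, rescaling) one at a time, each time exhibiting explicit rectangular diagrams realizing the configuration described in the corresponding definition and then checking by inspection that the passage between them is accomplished by a short sequence of (de)stabilizations, exchange moves, flypes, and wrinkle (not half-wrinkle) moves. The common strategy is this: by Proposition \ref{representing-movie-prop}(i) there is \emph{some} $\Pi$ with $\Psi_\Pi=\Psi$; by part (ii) of the same proposition we are free to modify $\Pi$ by bubble and half-wrinkle moves inducing canonical morphisms, and by the Lemma in Section \ref{bubble-sec} bubble moves decompose into basic moves not using half-wrinkles. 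So without loss of generality we may arrange $\Pi$ to be in any convenient ``normal form'' near the meridians $m_{\theta_i}$ where the events of $\Psi$ sit — in particular we may assume every occupied meridian carries exactly two vertices away from a small controlled region, and insert via bubble moves as many unoccupied-meridian rectangles as we need to have room to maneuver. The half-wrinkle-free requirement is automatically satisfied as long as we never need to change which region $R_\Pi(\theta_0)$ is selected at an event of the form $\varnothing\leadsto\{\varphi',\varphi''\}$ or its reverse; since the moves of Definitions \ref{finger-def}--\ref{rescaling-def} do not create or destroy isolated chords except in a way dictated by the data, this can be maintained.

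First I would do rescaling, which is the easiest: a rescaling $\Psi\xmapsto{[f*g]}\Psi'$ is realized on the level of diagrams by applying the ambient coordinate change $(\theta,\varphi)\mapsto(f(\theta),g(\varphi))$, and since $f,g$ are isotopic to the identity through orientation-preserving homeomorphisms of $\mathbb S^1$, this diagram isotopy is a product of exchange moves and the homeomorphism-reparametrization freedom already built into the definitions of the basic moves (the self-homeomorphism $\psi$ appearing in Definitions \ref{wrinkledef}, \ref{stab-move-def}, \ref{exchange-def}); one must only check the composed morphism is $[f*g]$, which is immediate from the explicit description in Definition \ref{rescaling-def}. Next, for the finger move I would build $\Pi$ so that near $m_{\theta_1}$ and $m_{\theta_2}$ the surface looks like Figure \ref{finger-foli-fig}: the chord $\{\varphi_1,\varphi_2\}$ alive on $(\theta_2;\theta_1]$ corresponds to a single rectangle $[\theta_2;\theta_1]\times[\varphi_1;\varphi_2]$ (or its transpose); passing to $\Psi'$ replaces this by a configuration with two new vertices on $m_{\theta_1}$ and $m_{\theta_2}$. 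This is exactly a wrinkle creation move (Definition \ref{wrinkledef}) performed at the vertex pair $(\theta_0,\varphi_1),(\theta_0,\varphi_2)$ for a suitable $\theta_0$, possibly preceded/followed by an exchange move to slide the new rectangles into the interval $(\theta_1;\theta_2)$ — precisely the same decomposition used in the bubble-move Lemma. Splitting of an event is handled similarly but one rung up in complexity: a single admissible event of $\Psi$ on $[\theta_1;\theta_2]$ corresponds to one meridian with $\geq 3$ vertices, and the two separated events of $\Psi'$ at $\theta_1,\theta_2$ correspond to two meridians each with fewer vertices; the passage is effected by a wrinkle creation move that introduces an auxiliary meridian, followed by an exchange move that separates the two halves of the original event (the separation being possible precisely because Condition (5) of Definition \ref{splitting-def} says the points involved in $e_1$ and $e_2$ lie in complementary arcs $[\varphi_2;\varphi_1]$ and $[\varphi_1;\varphi_2]$).

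The two commutation moves are where the flype enters. For a non-special commutation the two events of $\Psi$ occur on disjoint arcs of $\mathbb S^1$ in the $\varphi$-direction up to their mutual interaction, and one verifies that exchanging their $\theta$-order amounts to a single vertical exchange move on $\Pi$ (sliding one occupied meridian past the other), whose assigned morphism is exactly the composition of two commuting elementary isotopies described in Definition \ref{coummutation-def}; the disjointness of the supporting $3$-balls matches the compatibility hypotheses. The special commutation is the subtle case and I expect it to be \textbf{the main obstacle}: here the six points $\varphi_1,\dots,\varphi_6$ are genuinely interleaved, the chords before and after the middle moment genuinely share endpoints, and the two ``ways'' of resolving the triple $\{\varphi_3,\varphi_4\},\{\varphi_5,\varphi_6\}\mapsto\dots$ past $\{\varphi_1,\varphi_2\}$ are \emph{not} related by a mere exchange — they are related by a flype, since a flype is exactly the diagrammatic avatar of pushing a strand across a configuration of four rectangles arranged around a corner (compare Figure \ref{flypefig} with Figure \ref{spec-com-fig}). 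The work will be to set up $\Pi$ so that the four rectangles $r_1,\dots,r_4$ of Definition \ref{flype-def} are the ones carrying exactly the chords participating in the three events, to check that the flype hypotheses (2)--(4) hold (in particular that $v_2,v_3,v_4\notin\partial\Pi$, which is guaranteed because the events are genuine chord-replacements, not creations or deletions), to verify that the two variants of special commutation — with and without $\varphi_2$ — correspond respectively to the generic flype and to a degenerate flype in which one of the four rectangles is absent, and finally to check the morphism bookkeeping: that the flype's assigned morphism (an elementary isotopy supported in a $3$-ball containing the relevant singularities) coincides with $\chi$. The only other thing to watch throughout is the fixed-boundary condition: since none of these moves touches $\partial\Pi\cap\partial\Pi'$ — the altered rectangles are interior, and the inserted wrinkle vertices are new and hence not in the common boundary — every move in each sequence is fixed on $\partial\Pi\cap\partial\Pi'$ in the sense of Definition \ref{bound-fixed-def}, and no half-wrinkle is ever needed.
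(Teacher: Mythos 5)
Your plan is correct and follows essentially the same route as the paper: case by case, realizing the finger move as a (horizontal) bubble move, the splitting of an event as a wrinkle move, the special commutation as a flype set up by auxiliary bubble moves, the non-special commutation as a vertical exchange move, and the rescaling via the reparametrization freedom $\psi$ built into the moves (the paper implements the last point concretely as a wrinkle creation followed by a wrinkle reduction with a different $\psi$, which is cleaner than invoking exchange moves, whose empty-rectangle hypotheses need not hold). The only substantive detail to supply is the explicit rectangle configuration for the flype in the special-commutation case, which you correctly identify as the main work.
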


\begin{proof}
We consider all types of moves one by one. The fact that the morphism
obtained from the constructed sequence of basic moves is~$\chi$,
is pretty obvious in each case. So, we concentrate on the description of
the decomposition.

\medskip\noindent{\it Case 1\emph: $\Psi\xmapsto\chi\Psi'$ is a finger move}.\\
We use the notation from Definition~\ref{finger-def}. Since~$\Psi$ has no
events in the interval~$[\theta_1;\theta_2]$, we may choose a rectangular
diagram of a surface~$\Pi$ so that~$\Psi=\Psi_\Pi$ and $\Pi$ has no occupied
meridians in~$(\theta_1;\theta_2)\times\mathbb S^1$.
Since~$\{\varphi_3,\varphi_4\}\in\Psi(\theta)$ for all~$\theta\in[\theta_1;\theta_2]$
we may additionally ensure that~$\Pi$ contains the rectangle~$r=[\theta_1;\theta_2]\times[\varphi_4;\varphi_3]$.

Let~$\Pi'$ be the rectangular diagram of a surface obtained from~$\Pi$ by replacing~$r$
with the rectangles
$$[\theta_1;\theta_2]\times[\varphi_4;\varphi_1],\quad[\theta_1;\theta_2]\times[\varphi_2;\varphi_3],\quad
[\theta_2;\theta_1]\times[\varphi_1;\varphi_2].$$
One can see that~$\Psi'=\Psi_{\Pi'}$, and $\Pi\mapsto\Pi'$ is a horizontal bubble creation move,
which completes the proof in this case.

\medskip\noindent{\it Case 2\emph: $\Psi\xmapsto\chi\Psi'$ is a splitting of an event}.\\
We use the notation from Definition~\ref{splitting-def}.
A rectangular diagram of a surface~$\Pi'$ such that~$\Psi'=\Psi_{\Pi'}$
can be chosen so that the following two rectangles belong to~$\Pi'$:
$$[\theta_1;\theta_3]\times[\varphi_1;\varphi_2],\quad
[\theta_3;\theta_2]\times[\varphi_2;\varphi_1],$$
where~$\theta_3$ is the midpoint of the interval~$[\theta_1;\theta_2]$.
We may also ensure that~$\Pi'$ has no occupied meridians in the domain~$(\theta_1;\theta_2)\times\mathbb
S^1$ except~$m_{\theta_3}$.

There is then a vertical wrinkle reduction move~$\Pi'\mapsto\Pi$ reducing
these two rectangles and producing a diagram~$\Pi$ such that~$\Psi=\Psi_\Pi$.
This case is also done.

\medskip\noindent{\it Case 3\emph: $\Psi\xmapsto\chi\Psi'$ is a special commutation}.\\
We use the notation from Definition~\ref{special-com-def}. Without loss of generality,
we may assume that~$\varphi_1,\varphi_2,\varphi_3,\varphi_6,\varphi_5,\varphi_4$ follow on~$\mathbb S^1$
in the same cyclic order as listed. We consider the first version of the move (with~$\varphi_2$ involved).

We may choose a rectangular diagram of a surface~$\Pi$ such that~$\Psi=\Psi_\Pi$,
and~$\Pi$ has no occupied meridians in the domain~$\bigl([\theta_1;\theta_2)\cup(\theta_2;\theta_3)\bigr)\times\mathbb S^1$.
By applying a vertical bubble creation move, we may achieve that~$\Pi$
has rectangles of the form
$$r_0=[\theta_4;\theta_1]\times[\varphi_4;\varphi_3],\quad
r_1=[\theta_1;\theta_2]\times[\varphi_3;\varphi_4],\quad\theta_4\in(\theta_3;\theta_1).$$
It follows from Definition~\ref{special-com-def}
that there are also the following rectangles in~$\Pi$:
$$r_2=[\theta_2;\theta_3]\times[\varphi_4;\varphi_1],\quad r_3=[\theta_3;\theta_5]\times[\varphi_5;\varphi_4],\quad
r_4=[\theta_6;\theta_3]\times[\varphi_6;\varphi_5],$$
$$r_5=[\theta_3;\theta_7]\times[\varphi_1;\varphi_6],\quad
r_6=[\theta_8;\theta_2]\times[\varphi_1;\varphi_2],\quad r_7=[\theta_2;\theta_9]\times[\varphi_2;\varphi_3]$$
for some~$\theta_5,\ldots,\theta_9\in\mathbb S^1$ (see Figure~\ref{comm-flype-fig}).
\begin{figure}[ht]
\begin{tabular}{ccc}
\includegraphics{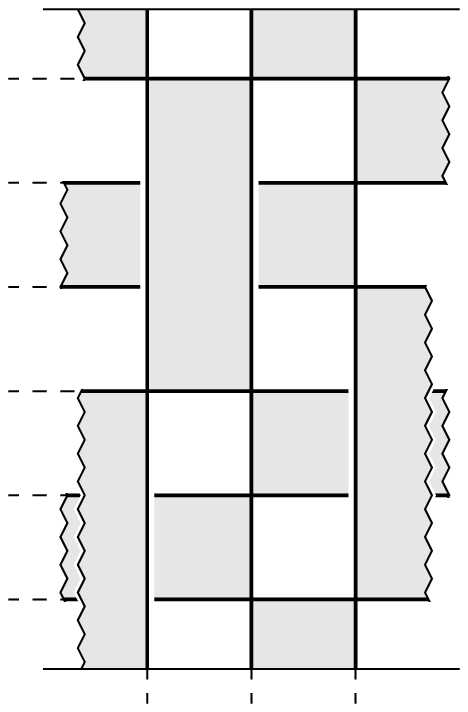}\put(-103,-5){$\theta_1$}\put(-73,-5){$\theta_2$}\put(-43,-5){$\theta_3$}
\put(-152,34){$\varphi_1$}\put(-152,64){$\varphi_2$}\put(-152,94){$\varphi_3$}
\put(-152,124){$\varphi_6$}\put(-152,154){$\varphi_5$}\put(-152,184){$\varphi_4$}
\put(-113,75){$r_0$}\put(-88,48){$r_6$}\put(-58,78){$r_7$}\put(-58,138){$r_4$}
\put(-88,163){$r_1$}\put(-113,193){$r_0$}\put(-58,23){$r_2$}\put(-58,193){$r_2$}
\put(-33,168){$r_3$}\put(-33,105){$r_5$}
&\hbox to1cm{\hss}&
\includegraphics{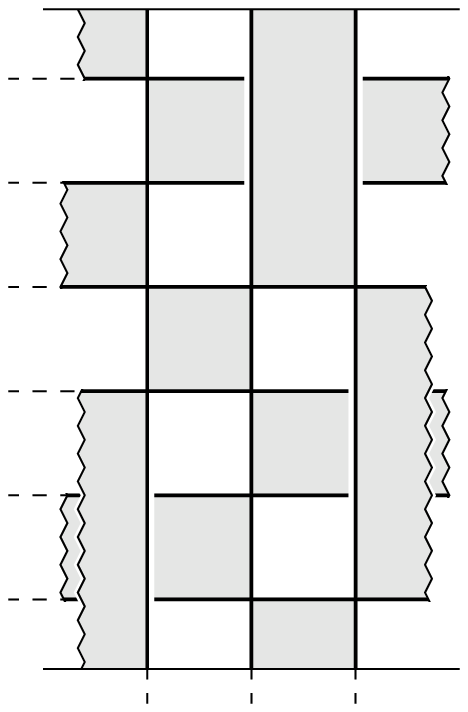}\put(-103,-5){$\theta_1$}\put(-73,-5){$\theta_2$}\put(-43,-5){$\theta_3$}
\put(-152,34){$\varphi_1$}\put(-152,64){$\varphi_2$}\put(-152,94){$\varphi_3$}
\put(-152,124){$\varphi_6$}\put(-152,154){$\varphi_5$}\put(-152,184){$\varphi_4$}
\put(-116,138){$r_4'$}\put(-58,23){$r_2'$}\put(-58,143){$r_2'$}
\put(-88,108){$r_1'$}\put(-88,168){$r_3'$}
\\[3mm]
$\Pi$&&$\Pi'$
\end{tabular}
\caption{Realizing a special commutation by means of a flype}\label{comm-flype-fig}
\end{figure}

Let~$\Pi'$ be a rectangular diagram of a surface obtained by replacing the rectangles~$r_1,r_2,r_3,r_4$ with
$$r_1'=[\theta_1;\theta_2]\times[\varphi_3;\varphi_6],\quad r_2'=[\theta_2;\theta_3]\times[\varphi_6;\varphi_1],\quad
r_3'=[\theta_1;\theta_5]\times[\varphi_5;\varphi_4],\quad r_4'=[\theta_6;\theta_1]\times[\varphi_6;\varphi_5].$$
Then~$\Pi\mapsto\Pi'$ is a flype (the roles of~$r_i,r_i'$, $i=1,2,3,4$, here are the same
as in Definition~\ref{flype-def}) and~$\Pi'$ represents~$\Psi'$.

For the `simplified' version of the move the argument is exactly the same, one only has
to ignore rectangles~$r_6$ and~$r_7$, which are now not present in the diagrams.

\medskip\noindent{\it Case 4\emph: $\Psi\xmapsto\chi\Psi'$ is a non-special commutation}.\\
We use the notation from Definition~\ref{coummutation-def} and the construction
from the proof of Proposition~\ref{representing-movie-prop}.

Let~$\varphi_1',\varphi_2',\ldots,\varphi_k'\in\mathbb S^1$ (respectively,
$\varphi_1'',\varphi_2'',\ldots,\varphi_l''\in\mathbb S^1$) be the points
involved in the first (respectively, the second) event of~$\Psi$ occurring in
the interval~$[\theta_1,\theta_3]$, and let~$\theta'$ (respectively, $\theta''$)
be the exact moment of this event. Exchangeability of the events
means that there are a region~$R_2$
of~$\Psi(\theta'')=\Psi(\theta'+0)$ and points~$\varphi_1,\varphi_2\in R_2$
such that
$$\varphi_1',\varphi_2',\ldots,\varphi_k'\in(\varphi_2;\varphi_1),\quad
\varphi_1'',\varphi_2'',\ldots,\varphi_l''\in(\varphi_1;\varphi_2).$$
Moreover, there are regions~$R_1$ of~$\Psi(\theta')=\Psi(\theta_1)$ and~$R_3$
of~$\Psi(\theta''+0)=\Psi(\theta_3)$ also containing~$\varphi_1,\varphi_2$.

Choose~$\theta_2\in(\theta';\theta'')$
and a rectangular diagram of a surface~$\Pi$ representing~$\Psi$ such that:
\begin{enumerate}
\item
$R_\Pi(\theta_i)=R_i$, $i=1,2,3$;
\item
on the intervals~$(\theta_1;\theta')$, $(\theta';\theta_2)$, $(\theta_2;\theta'')$, $(\theta'';\theta_3)$,
the diagram~$\Pi$ realizes the shortest paths:
\begin{itemize}
\item
from~$R_1$ to~$R_\Pi(\theta')$ on the tree~$T(\Psi(\theta'))$,
\item
from~$R_\Pi(\theta'+0)$ to~$R_2$ on the tree~$T(\Psi(\theta''))$,
\item
from~$R_2$ to~$R_\Pi(\theta'')$ on the tree~$T(\Psi(\theta''))$, and
\item
from~$R_\Pi(\theta''+0)$ to~$R_3$ on the tree~$T(\Psi(\theta_3))$,
\end{itemize}
respectively.
\end{enumerate}

Then~$\Pi$ satisfies all the conditions of Definition~\ref{exchange-def} (with~$\theta_1,\theta_2,
\theta_3,\varphi_1,\varphi_2$ playing the same role), so, an exchange move
can be applied to~$\Pi$, which will exchange the events that occur at~$\theta_1$ and $\theta_2$
in~$\Psi$. This exchange move can be chosen so as to obtain~$\Pi'$ representing~$\Psi'$.

\medskip\noindent{\it Case 5\emph: $\Psi\xmapsto\chi\Psi'$ is a rescaling}.\\
We use the notation from Definition~\ref{rescaling-def}.
Choose any rectangular diagram~$\Pi$ with~$\Psi_\Pi=\Psi$. It suffices to consider the case
when the homeomorphism $f\times g:\mathbb T^2\rightarrow\mathbb T^2$ is identical outside
an annulus containing the whole of exactly one occupied level~$x$ of~$\Pi$, and such that~$x$
is the midline of the annulus. Indeed, a general rescaling
can be decomposed into a sequence of such, `elementary', rescalings.

Let~$\Pi'=(f\times g)(\Pi)$. The transformation~$\Pi\mapsto\Pi'$ can be decomposed
into two basic moves: first, a wrinkle creation, and second, a wrinkle reduction. The
arbitrariness in the definition of these moves allows the second one to be
inverse to the first one in the combinatorial, but not in the geometrical, sense.
\end{proof}

\section{The fixed boundary case}\label{fixed-b-sec}

\begin{prop}\label{fixed-b-prop}
Let~$\Pi$ and~$\Pi'$ be rectangular diagrams of surfaces having common boundary,
$\partial\Pi=\partial\Pi'$. Suppose that there are
a self-homeomorphism~$\phi_1$ of~$\mathbb S^3$ that takes~$\widehat\Pi$ to~$\widehat\Pi'$,
and an isotopy~$(\phi_t)_{t\in[0;1]}$ from~$\phi_0=\mathrm{id}|_{\mathbb S^3}$ to~$\phi_1$
fixed on the boundary~$\partial\widehat\Pi$ and keeping fixed
the tangent plane to the surface at every point of~$\partial\widehat\Pi$.

Then there exists a sequence of basic moves producing~$\Pi'$ from~$\Pi$ such that
all the moves in it are fixed on~$\partial\Pi$, and the composition of all the moves
induces the morphism~$[\phi_1]$.
\end{prop}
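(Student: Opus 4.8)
The plan is to convert the given isotopy into a finite chain of moves of the associated movie diagrams and then apply Lemma~\ref{decomp-of-movie-moves-lem} together with Proposition~\ref{representing-movie-prop}. Put $F_t=\phi_t(\widehat\Pi)$, so that $F_0=\widehat\Pi$ represents $\Psi_\Pi$ and $F_1=\widehat\Pi'$ represents $\Psi_{\Pi'}$, while the whole family $(F_t)_{t\in[0;1]}$ keeps $\partial\widehat\Pi$ and the tangent planes along it fixed.

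The first — and, I expect, hardest — step is to put the path $(F_t)$ into general position with respect to the open book $\{\mathscr P_\theta\}_{\theta\in\mathbb S^1}$, deforming $(\phi_t)$ rel $t\in\{0,1\}$ and rel the prescribed $1$-jet along $\partial\widehat\Pi$. One wants: for all $t$ outside a finite set $0<t_1<\dots<t_M<1$ the surface $F_t$ meets the pages only in the standard ways of Figure~\ref{sing-pic}, with the singularities of $\mathscr F_{F_t}$ occurring at pairwise distinct values of $\theta$ distinct from the critical values of $\partial\widehat\Pi$, so that $F_t$ represents a well-defined movie diagram; and at each $t_i$ the path crosses the discriminant transversally through a single codimension-one degeneration. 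The combinatorial core is then the classification of these degenerations: they are exactly the birth or death of a cancelling pair of singularities, the collision and re-emergence of two singularities lying on the same page, and the passage of two singularities through a common page — i.e.\ precisely the transitions realized by finger moves, splittings and mergings of events, and (special and non-special) commutations of Definitions~\ref{finger-def}--\ref{coummutation-def}; the hypothesis that the tangent planes along $\partial\widehat\Pi$ stay fixed is what forbids a singularity from being born at the binding $\mathbb S^1_{\tau=0}$. Between two consecutive $t_i$ the movie diagram is combinatorially constant but its $\theta$- and $\varphi$-labels drift, which is recorded as a rescaling move (Definition~\ref{rescaling-def}). This produces a chain $\Psi_\Pi=\Xi_0\mapsto\Xi_1\mapsto\dots\mapsto\Xi_\ell=\Psi_{\Pi'}$ of moves of movie diagrams whose composite morphism is $[\phi_1]$, because the chain is a factorization of the isotopy $(\phi_t)$; here one also needs the elementary observation (deduced from Proposition~\ref{representing-movie-prop} and the rescaling move) that a sub-isotopy with combinatorially constant movie diagram contributes a composite of a rescaling with a canonical morphism.

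Next, applying Lemma~\ref{decomp-of-movie-moves-lem} to each arrow $\Xi_{i-1}\mapsto\Xi_i$ yields rectangular diagrams $\Pi_i^-$ and $\Pi_i^+$ representing $\Xi_{i-1}$ and $\Xi_i$, connected by basic moves fixed on $\partial\Pi_i^-\cap\partial\Pi_i^+$ and inducing the prescribed morphism. Since $\Pi$ and $\Pi_1^-$ represent the same movie diagram $\Xi_0$, and similarly $\Pi_i^+$ and $\Pi_{i+1}^-$ represent $\Xi_i$, and $\Pi_\ell^+$ and $\Pi'$ represent $\Xi_\ell$, Proposition~\ref{representing-movie-prop}(ii) connects each of these pairs by bubble and half-wrinkle moves realizing the canonical morphism. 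Concatenating all these segments gives a sequence of basic moves from $\Pi$ to $\Pi'$ whose total morphism is $[\phi_1]$.

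It remains to check that every move in this concatenation is \emph{fixed on $\partial\Pi$} in the sense of Definition~\ref{bound-fixed-def}, i.e.\ preserves the set $\partial\Pi=\partial\Pi'$ and the $\diagup/\diagdown$ type of each of its vertices. Since the construction never moves $\partial\widehat\Pi$ nor changes the coorientation data along it, this is mostly bookkeeping; the one point requiring care is that bubble and half-wrinkle moves may slide a boundary vertex along its occupied level, so one should either state the invariance at the level of the link diagram $\partial\Pi$ up to the corresponding elementary moves of link diagrams, or choose the auxiliary diagrams $\Pi_i^\pm$ so that these slides cancel in pairs. As indicated, the main obstacle is the general-position step together with the classification of codimension-one transitions of the movie diagram, with the correct behaviour near the binding enforced by the tangent-plane hypothesis.
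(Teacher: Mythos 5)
Your outline follows the same broad route as the paper (put the isotopy in general position with respect to the open book, read off a chain of movie-diagram moves, then invoke Lemma~\ref{decomp-of-movie-moves-lem} and Proposition~\ref{representing-movie-prop}), but it has a genuine gap at its central step: you assert that after a generic perturbation each~$F_t$ outside finitely many moments ``represents a well-defined movie diagram''. This is false. A generic isotopy creates saddle--center pairs in the interior of the surface (you even list their birth/death among the codimension-one events), and a center singularity of~$\mathscr F_{F_t}$ is surrounded by \emph{closed} leaves, which persist under perturbation. A surface whose page intersections have closed components does not represent any movie diagram, so your chain~$\Xi_0\mapsto\dots\mapsto\Xi_\ell$ does not exist as described. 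Repairing this is the bulk of the actual proof: one must break up all closed leaves by \emph{geometric finger moves} (Definition~\ref{geom-finger-def}), work with the resulting auxiliary surfaces~$\iota(F_0)$, $\iota\in\mathscr Z_t$, rather than with~$F_t$ itself, show that different ways of breaking up the leaves differ by finger moves and rescalings, and then carry out a case analysis matching the choices on either side of each degenerate moment (the five cases of collisions, the saddle--center births, and the decomposition of a general geometric finger move into a finger move of movie diagrams followed by commutations and rescalings). None of this is present in your proposal, and it cannot be dismissed as routine.

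Two smaller points. First, your claim that the fixed tangent planes along~$\partial\widehat\Pi$ forbid singularities from being born at the binding is incorrect: the hypothesis constrains the surface only along~$\partial\widehat\Pi$, and interior tangencies of~$F_t$ with~$\mathbb S^1_{\tau=0}$ creating or annihilating a pair of vertices of~$\mathscr F_t$ are generic in one-parameter families and must be included in the classification (the paper handles them, again, via geometric finger moves). Second, the final bookkeeping is not optional: vertical half-wrinkle moves produced by Proposition~\ref{representing-movie-prop}(ii) flip the types of two boundary vertices and are therefore \emph{not} fixed on~$\partial\Pi$ in the sense of Definition~\ref{bound-fixed-def}; weakening the conclusion to invariance ``up to elementary moves of link diagrams'', as you suggest, would not prove the proposition as stated. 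One needs a concrete post-processing step (the paper inserts compensating half-wrinkle creations near each offending meridian, converting every half-wrinkle move in the sequence into a full wrinkle move or a rescaling while keeping the represented movie diagrams, and hence the composite morphism, unchanged).
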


\begin{proof}
For brevity, we denote~$\phi_t(\widehat\Pi)$ by~$F_t$, $t\in[0;1]$.
In particular, $F_0=\widehat\Pi$. The singular foliation induced by the open book
decomposition~$\{\mathscr P_\theta\}_{\theta\in\mathbb S^1}$ on the surface~$F_t\setminus\mathbb S^1_{\tau=0}$
will be denoted by~$\mathscr F_t$.

We are going to convert the isotopy~$\phi$
into a sequence of basic moves. This is done in the following four steps.

\medskip\noindent\emph{Step 1}: Alter~$\phi$ slightly to make it `as generic as possible'
with respect to the induced family of foliations~$\mathscr F_t$.

Namely, we disturb~$\phi$ slightly so that,
for all but finitely many values of~$t$, the following holds (in which case the foliation~$\mathscr F_t$
is said to be \emph{generic}):
\begin{enumerate}
\item
the surface~$F_t$ is transverse to~$\mathbb S^1_{\tau=0}$;
\item
the foliation~$\mathscr F_t$, geometrically, has only Morse type singularities in the interior of~$F_t$;
\item
each page contains at most one of the following:
\begin{itemize}
\item
a singularity of~$\mathscr F_t$ at an interior point of~$F_t$;
\item
an arc contained in~$\partial F_t=\partial F_0$;
\end{itemize}
\item
at each point in~$\partial F_t\cap\mathbb S^1_{\tau=1}$ (where the boundary
of the surface usually has a singularity, and the surface is tangent to
a page~$\mathscr P_\theta$), the foliation~$\mathscr F_t$ either has no topological singularity
or has a `half-saddle' singularity (illustrated by the central picture in
the bottom row in Figure~\ref{sing-pic}).
\end{enumerate}
Moreover, we assume that the violations of these rules
that occur at exceptional moments~$t$ cannot be resolved in a one-parametric
family by a small perturbation. Here is the full list of what can happen at such moments:
\begin{enumerate}
\item
there is a tangency point of~$F_t$ and~$\mathbb S^1_{\tau=0}$;
\item
$\mathscr F_t$ has multiple saddle singularities, $t=0$ or~$1$;
\item
there is a page~$\mathscr P_\theta$ containing either two singularities of~$\mathscr F_t$ or an arc
of~$\partial F_t$ and a singularity of~$\mathscr F_t$ outside of this arc;
\item
the foliation~$\mathscr F_t$ has an index zero non-Morse-type geometric singularity in an interior point.
\end{enumerate}
Consider these situations one by one in more detail.

\medskip\noindent\emph{Tangency with~$\mathbb S^1_{\tau=0}$}.
A tangency of~$F_t$ and~$\mathbb S^1_{\tau=0}$ is persistent under small perturbations
if two vertices of~$\mathscr F_t$ are created or
disappear at this moment. We may assume that the second fundamental form
of the surface is non-degenerate at the tangency point. If it is sign-definite, then
a creation of two vertices of~$\mathscr F_t$ is accompanied by
an annihilation of two center singularities. If the second fundamental form
is indefinite, then a creation of two vertices is accompanied by
a creation of two saddle singularities. This is illustrated in Figure~\ref{vertex-creation-fig}.
\begin{figure}[ht]
\includegraphics{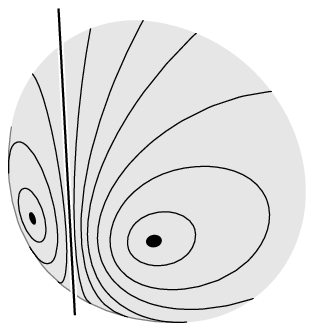}\put(-93,84){$\mathbb S^1_{\tau=0}$}
\hskip.1cm\raisebox{40pt}{$\longleftrightarrow$}\hskip.1cm
\includegraphics{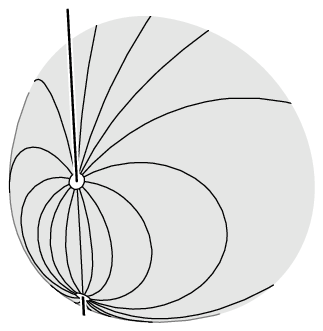}\put(-93,84){$\mathbb S^1_{\tau=0}$}
\hskip1cm
\includegraphics{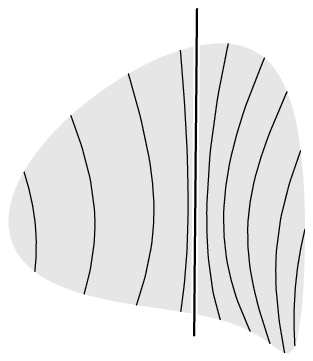}\put(-54,93){$\mathbb S^1_{\tau=0}$}
\hskip.1cm\raisebox{40pt}{$\longleftrightarrow$}\hskip.1cm
\includegraphics{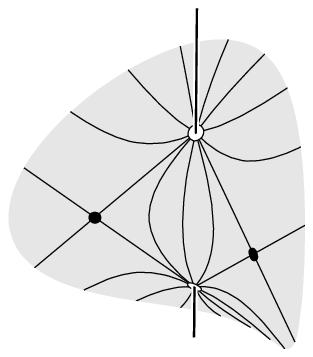}\put(-54,93){$\mathbb S^1_{\tau=0}$}
\caption{Creation and cancellation of two vertices of~$\mathscr F_t$}\label{vertex-creation-fig}
\end{figure}

\medskip\noindent\emph{Multiple saddles}.
By definition, the surfaces~$F_0$ and~$F_1$ are tangent
to the respective pages at all points of their intersections with~$\mathbb S^1_{\tau=1}$.
At all such points, the foliation~$\mathscr F_t$, $t=0,1$, has geometrical singularities,
which are, in general, multiple saddles. The multiplicity of a saddle~$s$
is defined as~$(m-2)/2$ if~$s$ is an interior point of the surface and otherwise as~$(m-2)$,
where~$m$ is the number of separatrices approaching~$s$.
Zero multiplicity saddles are regular points from the topological point of view.

By a small perturbation of the surface all multiple saddles of multiplicities greater than one
can be resolved into ordinary (multiplicity one) saddles, and zero multiplicity saddles at interior points can be smoothed out
to become geometrically regular points.

Note that to specify a resolution it suffices to show how the foliation~$\mathscr F_t$ changes near the singularity.
Indeed, for a fixed~$t$, the embedding~$\phi_t|_{F_0}$ can be viewed as three functions, $\theta$, $\varphi$,
and~$\tau$ defined on~$F_0$ (with~$\theta$ and~$\varphi$ taking values in~$\mathbb S^1$ and
not everywhere defined). At a singularity, the surface is tangent to a page~$\mathscr P_\theta$,
so, small deformations in the $\theta$-direction are safe in the sense that they will
not prevent the respective map~$F_0\rightarrow\mathbb S^3$ from being an embedding.
Deformation in the $\theta$-direction means keeping the functions~$\varphi$ and~$\tau$ fixed
while altering~$\theta$. And to specify the alteration of the latter it suffices
to specify how the foliation~$\mathscr F_t$ changes.

All multiple saddles
must be resolved
at the beginning
of the isotopy and turned into multiple saddles back (probably in a different way) at the last moment.

\medskip\noindent\emph{Two singularities in a single page}.
At each moment~$t$, the values of~$\theta$ at singularities of~$\mathscr F_t$ and on~$\partial F_t$
are called \emph{critical}.
Critical values of~$\theta$ may eventually coincide. In a one-parametric
family of surfaces this situation is persisting if two critical values are going to be exchanged.
Such an unavoidable coincidence of two critical values of~$\theta$
will be referred to as \emph{a collision}.

\medskip\noindent\emph{A non-Morse-type geometrical singularity}.
Finally, there could be moments at which a saddle--center pair of singularities is born or cancelled. At
such moments the foliation~$\mathscr F_t$ has an index zero non-Morse-type singularity
at an interior point.

\medskip\noindent\emph{Step 2}: Turn the isotopy into a sequence
of moves of movie diagrams.

Let~$t_0=0<t_1<t_2<\ldots<t_m=1$ be the moments at which~$\mathscr F_t$ is not generic,
and let~$\varepsilon>0$ be smaller than half the distance between~$t_{i-1}$ and~$t_i$ for any~$i=1,2,\ldots,m$.

By taking~$\varepsilon$ sufficiently small and modifying~$\phi$ slightly if necessary, we may
assume that~$\phi_{t_i+\varepsilon}\circ\phi_{t_i-\varepsilon}^{-1}$
is an elementary isotopy from~$\phi_{t_i-\varepsilon}(F_0)$ to~$\phi_{t_i+\varepsilon}(F_0)$, $i=1,\ldots,m-1$,
supported on
a small ball containing only those singularities and vertices of~$\mathscr F_{t_i\pm\varepsilon}$
that are directly involved in the combinatorial change of the foliation (in the case of a collision, just one of the two collided
singularities).

For~$t\in(t_1;t_{m-1})$ the foliation~$\mathscr F_t$ might have closed leaves, in which
case it cannot be encoded by a movie diagram. We overcome this as follows.

In the proof of~\cite[Proposition 5]{dp17} we described a procedure allowing to
deform an individual generic surface so that all intersections of
the obtained surface with every page become simply connected. It is based
on an idea borrowed from~\cite{IK} and consists of operations that are called
finger moves in~\cite{dp17}. The operations we define below are slightly more general than those in~\cite{dp17}.
We call them geometric finger moves
to distinguish from finger moves of movie diagrams (which are a combinatorial version of
a particular case of geometric finger moves).

\begin{defi}\label{geom-finger-def}
Let~$F$ and~$F'$ be two surfaces related by an elementary isotopy~$\zeta$ supported
on an open $3$-ball~$B$. Suppose that there is a closed $3$-ball~$D\subset B$ with the following
properties:
\begin{enumerate}
\item
$D$ intersects~$\mathbb S^1_{\tau=0}$ in an arc, and each page~$\mathscr P_\theta$ in a $2$-disc;
\item
$D\cap F=\partial D\cap F=d$ is a $2$-disc disjoint from~$\mathbb S^1_{\tau=0}$
and from some page~$\mathscr P_{\theta_0}$;
\item
the boundary of~$d$ consists of two smooth arcs transverse to~$\mathscr F_F$;
\item
there is a self-homeomorphism of~$\mathbb S^3$ identical outside~$B$ which preserve each
page~$\mathscr P_\theta$ and takes~$\overline{(\partial D)\triangle F}$ to~$F'$,
where~$\triangle$ stands for the symmetric difference.
\end{enumerate}
Then the passage~$F\mapsto F'$ assigned with the morphism~$[\zeta]$ is called \emph{a geometric finger move}.
The leaves of~$\mathscr F_F$ intersecting the interior of~$d$ are said to be \emph{broken up}
by this finger move.

If~$F$ and~$F'$ represent movie diagrams~$\Psi$ and~$\Psi'$, respectively,
then the transformation~$\Psi\xmapsto{[\zeta]}\Psi'$ is also called a geometric finger move.
\end{defi}

The effect of a finger move~$F\mapsto F'$ on the foliation~$\mathscr F_F$ depends on
the number of center singularities contained in~$\partial d$ (we use the notation from
Definition~\ref{geom-finger-def}). This can be equal to zero, one, or two.
The corresponding changes of the foliation are shown in Figure~\ref{one-center-fig},
where~$\partial d$ is shown in dashed line.
\begin{figure}[ht]
\begin{tabular}{ccc}
\includegraphics[scale=.6]{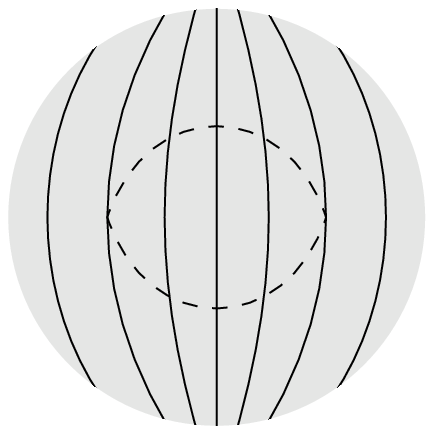}
&\raisebox{38pt}{$\longrightarrow$}&
\includegraphics[scale=.6]{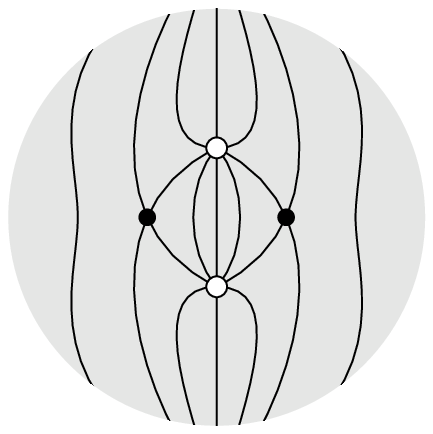}
\\
\includegraphics[scale=.6]{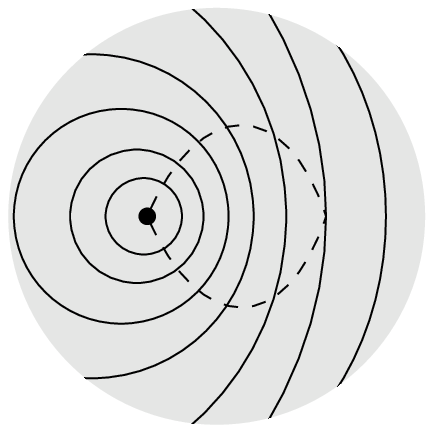}
&\raisebox{38pt}{$\longrightarrow$}&
\includegraphics[scale=.6]{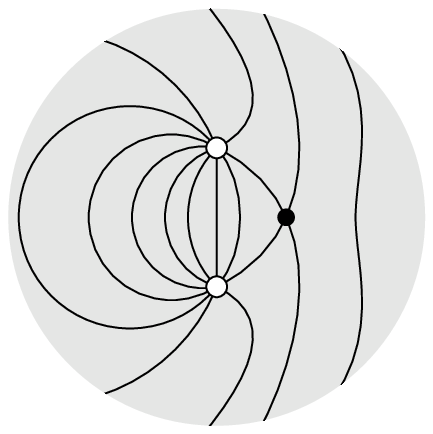}
\\
\includegraphics[scale=.6]{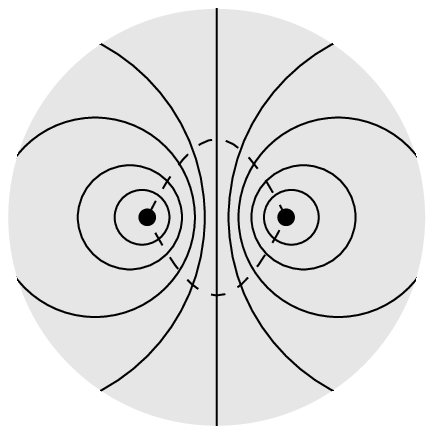}
&\raisebox{38pt}{$\longrightarrow$}&
\includegraphics[scale=.6]{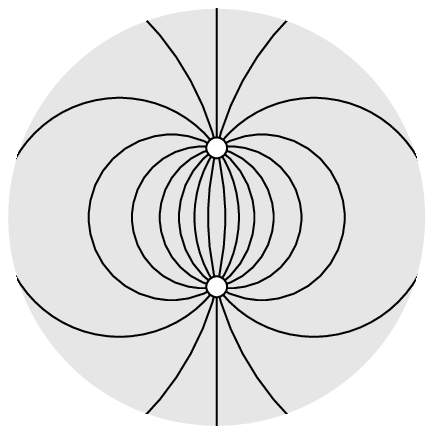}
\end{tabular}
\caption{Change of the foliation under a geometric finger move}\label{one-center-fig}
\end{figure}

In all three cases, the leaves that are said to be broken up by the finger move
are, indeed, modified in a way justifying this terminology. If such a leaf
is a closed curve it is turned into an arc, and if it is an arc it is split into two arcs.
The leaves that do not intersect~$d$ are untouched or just deformed.
The leaves passing through the endpoints of the two smooth arcs transverse to~$\mathscr F_F$
that form~$\partial d$ are modified by a homotopy equivalence.

For~$t\in(0,1)$, denote by~$\mathscr Z_t$ the set of all homeomorphisms~$\iota:\mathbb
S^3\hookrightarrow\mathbb S^3$
that:
\begin{enumerate}
\item
the transition from~$F_t$ to~$\iota(F_0)$ assigned with the morphism
$[\iota\circ\phi_t^{-1}]$ is the composition of a sequence of geometric finger moves
and ambient isotopies preserving the open book decomposition
and fixed on~$\partial F_0$;
\item
$\mathscr F_{\iota(F_0)}$
is generic and has no closed leaves.
\end{enumerate}
As follows from the construction in the proof of~\cite[Proposition 5]{dp17}
the set~$\mathscr Z_t$ is non-empty for all~$t\notin\{t_1,\ldots,t_m\}$.
One can also show that the large arbitrariness in the choice of
a way of breaking up closed leaves of~$\mathscr F_t$ allows one
to proceed from one choice to another by means of geometric finger moves,
their inverses, and rescalings without letting any of the closed leaves to recover.
In other words, for any two elements~$\iota,\iota'$ from~$\mathscr Z_t$
the transition from~$\iota(F_0)$ to~$\iota'(F_0)$ assigned with
the morphism~$[\iota'\circ\iota^{-1}]$ can be decomposed
into geometric finger moves, their inverses, and rescalings performed
\emph{within}~$\mathscr Z_t$.

For each element~$\iota\in\mathscr Z_t$, the surface~$\iota(F_0)$
represents a unique movie diagram,
which we denote by~$\Psi_\iota$.

For any~$i=1,\ldots,m$, small enough~$\varepsilon$,
and~$t\in(t_{i-1}+\varepsilon;t_i-\varepsilon)$, the foliation~$\mathscr F_t$
is generic, which implies that~$\phi_{t_i-\varepsilon}$ can be obtained from~$\phi_{t_{i-1}+\varepsilon}$
by an ambient isotopy preserving the open book decomposition of~$\mathbb S^3$
and fixed on~$\partial F_0$. Therefore, for any~$\iota\in\mathscr Z_{t_{i-1}+\varepsilon}$
there exists~$\iota'\in\mathscr Z_{t_i-\varepsilon}$ such that~$\Psi_\iota\xmapsto{\iota'\circ\iota^{-1}}\Psi_{\iota'}$
is a rescaling, which means
$\mathscr Z_{t_{i-1}+\varepsilon}=\mathscr Z_{t_i-\varepsilon}$.

Suppose that, for each~$i=1,\ldots,m$, two elements~$\iota_i',\iota_i''\in\mathscr Z_{t_{i-1}+\varepsilon}=\mathscr
Z_{t_i-\varepsilon}$ has been chosen. Then the transformation~$\Psi_\Pi\xmapsto{[\phi_1]}\Psi_{\Pi'}$
is the composition of the following ones:
$$\Psi_\Pi\xmapsto{[\iota_1']}\Psi_{\iota_1'}\xmapsto{[\iota_1''\circ{\iota_1'}^{-1}]}\Psi_{\iota_1''}
\xmapsto{[\iota_2'\circ{\iota_1''}^{-1}]}\Psi_{\iota_2'}\xmapsto{[\iota_2''\circ{\iota_2'}^{-1}]}\Psi_{\iota_2''}
\xmapsto{[\iota_3'\circ{\iota_2''}^{-1}]}\ldots\xmapsto{[\iota_m''\circ{\iota_m'}^{-1}]}\Psi_{\iota_m''}
\xmapsto{[\phi_1\circ{\iota_m''}^{-1}]}\Psi_{\Pi'}.$$
It follows from what was just said that each transformation~$\Psi_{\iota_i'}\xmapsto{[\iota_i''\circ{\iota_i'}^{-1}]}\Psi_{\iota_i''}$,
$i=1,\ldots,m$, admits a decomposition into geometric finger moves and rescalings.

Observe that~$\mathscr F_t$ has no closed leaves for~$t=\varepsilon$ or~$t=1-\varepsilon$, hence,
$\phi_\varepsilon\in\mathscr Z_\varepsilon$ and~$\phi_{1-\varepsilon}\in\mathscr Z_{1-\varepsilon}$.
We take~$\phi_\varepsilon$ and~$\phi_{1-\varepsilon}$
for~$\iota_1'$ and~$\iota_m''$, respectively.
We will have that
the passage~$\Psi_\Pi\xmapsto{[\iota_1']}\Psi_{\iota_1'}$ can be decomposed
into a sequence of splittings of an event,
and the passage~$\Psi_{\iota_m''}\xmapsto{[\phi_1\circ{\iota_m''}^{-1}]}\Psi_{\Pi'}$ into
a sequence of mergings of events.

Thus, it remains to do the following two things:
\begin{enumerate}
\item
show how to implement a geometric finger move within a class~$\mathscr Z_t$ by means of allowed moves of movie diagrams;
\item
show how to choose, for each~$i=1,\ldots,m-1$, the elements~$\iota_i'',\iota_{i+1}'$ so that
the transformation~$\Psi_{\iota_i''}\xmapsto{[\iota_{i+1}'\circ{\iota_i''}^{-1}]}\Psi_{\iota_{i+1}'}$
can be decomposed into a sequence of allowed moves of movie diagrams.
\end{enumerate}

We start from the latter.

Suppose that a collision occurs at a moment~$t=t_i$, and~$s_1$, $s_2$ are the two singularities of~$\mathscr F_t$
that belong to the same page. There are the following five cases of collisions treated differently.

\smallskip\noindent\emph{Case 1}: one of~$s_1$ and~$s_2$ is a center singularity of~$\mathscr F_{t_i}$.

Without loss of generality we may assume that~$s_1$ is a center, and~$s_2$ is unaltered
when~$t$ goes from~$t_i-\varepsilon$ to~$t_i+\varepsilon$. We may also assume that~$s_1$
is a local maximum of~$\theta|_{F_t}$, and it moves `forward' when~$t$ goes from~$t_i-\varepsilon$
to~$t_i+\varepsilon$, from a page~$\mathscr P_{\theta_0}$ to a page~$\mathscr P_{\theta_0+\delta}$,
where~$\delta>0$ is small
(the other cases are symmetric to this one).

Clearly, there is an element~$\iota\in\mathscr Z_{t_i+\varepsilon}$ such that~$\iota(F_0)$
is obtained from~$\phi_{t_i+\varepsilon}(F_0)$
by a sequence of geometric finger moves in which the move eliminating~$s_1$  breaks
up a leaf in each page~$\mathscr P_\theta$ with~$\theta\in[\theta_0;\theta_0+\delta)$.
One can see that such~$\iota$ belongs also to~$Z_{t_i-\varepsilon}$.

Thus, in this case the sets~$\mathscr Z_{t_i\pm\varepsilon}$ have a non-empty intersection,
and we choose any~$\iota_i''=\iota_{i+1}'\in\mathscr Z_{t_i-\varepsilon}\cap\mathscr Z_{t_i+\varepsilon}$.

\smallskip\noindent\emph{Case 2}: $s_1$ and~$s_2$ are saddles (one of them may be
at the boundary and have multiplicity~zero or~one) not
connected by a separatrix.

Recall that the surfaces~$\phi_{t_i-\varepsilon}(F_0)$
and~$\phi_{t_i+\varepsilon}(F_0)$ coincide outside of a small ball~$B$ containing one of the
saddles~$s_1,s_2$. This implies that elements~$\iota_0\in\mathscr Z_{t_i-\varepsilon}$
and~$\iota_1\in\mathscr Z_{t_i+\varepsilon}$ can be chosen so that:
\begin{enumerate}
\item
$\phi_{t_i-\varepsilon}\circ\iota_0^{-1}$
and~$\phi_{t_i+\varepsilon}\circ\iota_1^{-1}$ are identical on~$B$;
\item
$\iota_0\circ\iota_1^{-1}$ is identical outside~$B$;
\item
there is an isotopy~$\{\iota_t\}_{t\in[0;1]}$ from~$\iota_0$ to~$\iota_1$
such that for all~$t\in[0;1]\setminus\{1/2\}$ the foliation~$\mathscr F_{\iota_t(F_0)}$ is generic,
and at~$t=1/2$ a collision of~$s_1$ and~$s_2$ occurs.
\end{enumerate}
We let~$\iota_i''$ and~$\iota_{i+1}'$ be~$\iota_0$ and~$\iota_1$, respectively.
One can see
that~$\Psi_{\iota_i''}\xmapsto{[\iota_{i+1}'\circ{\iota_i''}^{-1}]}\Psi_{\iota_{i+1}'}$ is a non-special commutation.

\smallskip\noindent\emph{Special subcase of Case~2}: the foliations~$\mathscr F_{t_i\pm\varepsilon}$ have no closed leaves.

We simply put~$\iota_i''=\phi_{t_i-\varepsilon}$
and~$\iota_{i+1}'=\phi_{t_i+\varepsilon}$.

\smallskip\noindent\emph{Cases 3 and 4}: $s_1$ and~$s_2$ are saddles
which are connected by a separatrix at the moment of collision,
and neither of~$\mathscr F_{\phi_{t_i-\varepsilon}(F_0)}$, $\mathscr F_{\phi_{t_i+\varepsilon}(F_0)}$ has closed leaves,
so, $\phi_{t_i\pm\varepsilon}\in\mathscr Z_{t_i\pm\varepsilon}$.
In these cases, we again put~$\iota_i''=\phi_{t_i-\varepsilon}$
and~$\iota_{i+1}'=\phi_{t_i+\varepsilon}$.
Denote by~$\alpha$ the separatrix connecting~$s_1$ and~$s_2$ at the collision moment.

Assume for the moment that none of~$s_1$ and~$s_2$ lies at the boundary, which means that they are
ordinary saddles of~$\mathscr F_{\phi_{t_i}}$.

Let~$\beta_i$, $i=1,2,3,4,5,6$, be the six separatrices other than~$\alpha$ approaching~$s_1$ and~$s_2$,
numbered in the order in which they intersect the boundary of a small regular neighborhood of~$\alpha$,
and so that~$\beta_1,\beta_2,\beta_3$ approach~$s_1$.
Since~$\mathscr F_{\phi_{t_i-\varepsilon}(F_0)}$ and~$\mathscr F_{\phi_{t_i+\varepsilon}(F_0)}$
have no closed leaves, the foliation~$\mathscr F_{\phi_{t_i}(F_0)}$ has no saddle connection cycles.
Therefore, the separatrices~$\beta_i$ are pairwise distinct, and each~$\beta_i$ approaches
a vertex of~$\mathscr F_{\phi_{t_i}(F_0)}$, which we denote by~$v_i$.

There are two cases here depending on whether the tangent plane to~$\phi_{t_i}(F_0)$ at~$p\in\alpha$
makes a half-twist around~$\alpha$ relative to~$\mathscr P_{\theta_0}$ when~$p$
proceeds from one endpoint of~$\alpha$ to the other.

\smallskip\noindent\emph{Case 3}: No half-twist.

The vertices~$v_1,\ldots,v_6$ follow on~$\mathbb S^1_{\tau=0}$ in the same or opposite cyclic order
as their numeration suggests.
In this case, by an isotopy
fixed outside a small neighborhood of~$\alpha$, the separatrix~$\alpha$ can be collapsed to a point
producing a double saddle out of two ordinary ones.
This means that the transition~$\Psi_{\iota_i''}\xmapsto{[\iota_{i+1}'\circ{\iota_i''}^{-1}]}\Psi_{\iota_{i+1}'}$ can be decomposed into two moves: a merging of events,
and a splitting of an event.

\smallskip\noindent\emph{Case 4}: There is a half-twist.

The cyclic order of~$v_1,\ldots,v_6$ in~$\mathbb S^1_{\tau=0}$ is either
$$v_1,v_2,v_3,v_6,v_5,v_4$$
or the opposite one. In this case, the passage~$\Psi_{\iota_i''}\xmapsto{[\iota_{i+1}'\circ{\iota_i''}^{-1}]}\Psi_{\iota_{i+1}'}$ is a special commutation.

In Cases~3 and~4,
if~$s_1$ lies at the boundary of the surface, the reasoning is exactly the same with five
separatrices instead of six ones ($\beta_2$ and~$v_2$ should be omitted).

\smallskip\noindent\emph{Case 5}: $s_1$ and~$s_2$ are saddles connected by
a separatrix, and at least one of~$\mathscr F_{\phi_{t_i-\varepsilon}}$ and~$\mathscr F_{\phi_{t_i+\varepsilon}}$
has closed leaves. Geometric finger moves allow to break up not only
closed leaves of the foliations~$\mathscr F_t$
but also saddle connections. So, we can proceed as in Case~2 by choosing
the elements~$\iota_0\in\mathscr Z_{t_i-\varepsilon}$
and~$\iota_1\in\mathscr Z_{t_i+\varepsilon}$ satisfying the additional requirement
that, at the moment of the collision that occurs during the isotopy between
them, no saddle connection between~$s_1$ and~$s_2$ is present.

We are done with collisions.

\smallskip

Now suppose that a tangency of~$F_t$ and~$\mathbb S^1_{\tau=0}$ occurs at a moment~$t=t_i$
and two vertices of~$\mathscr F_t$ are created.
In this case, the passage from~$\phi_{t_i-\varepsilon}$ to~$\phi_{t_i+\varepsilon}$
is a geometric finger move, therefore~$\mathscr Z_{t_i+\varepsilon}\subset\mathscr Z_{t_i-\varepsilon}$.
Similarly, If a tangency of~$F_t$ and~$\mathbb S^1_{\tau=0}$ occurs at a moment~$t=t_i$ and
two vertices of~$\mathscr F_t$ disappear, we have~$\mathscr Z_{t_i+\varepsilon}\supset\mathscr Z_{t_i-\varepsilon}$.
In these cases, we again pick any~$\iota_i''=\iota_{i+1}'\in\mathscr Z_{t_i-\varepsilon}\cap\mathscr Z_{t_i+\varepsilon}$.

\smallskip

Suppose that at a moment~$t=t_i$ a saddle--center pair of~$\mathscr F_t$ is being born.
We may assume that~$\phi_{t_i+\varepsilon}\circ\phi_{t_i-\varepsilon}^{-1}$ is identical
outside a small ball~$B$ in which the change of the foliation occurs.

We can find an element~$\iota\in\mathscr Z_{t_i-\varepsilon}$ obtained from~$\phi_{t_i-\varepsilon}$
by a sequence of geometric finger moves such that~$\iota\circ\phi_{t_i-\varepsilon}^{-1}$
is identical on~$B$. Then the same finger moves can be applied
to~$\phi_{t_i+\varepsilon}$ resulting in a self-homeomorphism~$\iota'$ of~$\mathbb S^3$
such that~$\iota'\circ\iota^{-1}$ is identical outside~$B$, and~$\iota'\circ\phi_{t_i+\varepsilon}^{-1}$
is identical on~$B$.

The foliation~$\mathscr F_{\iota'(F_0)}$
has closed leaves, and all them are the result of the creation of the saddle--center pair.
Let~$\iota'(F_0)\mapsto\widetilde F$ be a geometric finger move that breaks up all these leaves,
and~$\widetilde\phi$ be the corresponding elementary isotopy. Put~$\iota''=\widetilde\phi\circ\iota'$.
One can see that~$\iota(F_0)\mapsto\iota''(F_0)$
is also a geometric finger move. This means~$\iota''\in\mathscr Z_{t_i-\varepsilon}\cap\mathscr Z_{t_i+\varepsilon}$,
so, we put $\iota_i''=\iota_{i+1}'=\iota''$.

The situation when a saddle--center pair of~$\mathscr F_t$ is being cancelled at~$t=t_i$ is symmetric to this one.

Now let~$\iota,\iota_1\in\mathscr Z_t$, $t\notin\{t_i\}_{i=0}^m$, be such
that~$\Psi_\iota\xmapsto{\iota_1\circ\iota^{-1}}\Psi_{\iota_1}$ is a geometric finger move.
Let~$\gamma\subset\iota(F_0)$ be an arc transverse to~$\mathscr F_{\iota(F_0)}$ intersecting (except
at the endpoints) exactly those
leaves that are broken up by this move, and let~$[\theta_1;\theta_2]$ be the interval
in which~$\theta|_\gamma$ takes values.

If there are no singularities of~$\mathscr F_{\iota(F_0)}$ in the domain~$\bigcup_{\theta\in[\theta_1;\theta_2]}\mathscr P_\theta$,
then the passage~$\Psi_\iota\xmapsto{\iota_1\circ\iota^{-1}}\Psi_{\iota_1}$ fits into the definition of
a finger move of movie diagrams.

In the general case, one can always find a geometric finger move~$\iota(F_0)\mapsto\iota_0(F_0)$
with~$\iota_0\in\mathscr Z_t$ such that the leaves of~$\mathscr F_{\iota(F_0)}$ broken up
by this move intersect only a small portion of~$\gamma$,
and~$\Psi_\iota\xmapsto{\iota_0\circ\iota^{-1}}\Psi_{\iota_0}$ is a finger move of movie diagrams. There is then an isotopy~$\{\iota_u\}_{u\in[0;1]}$
from~$\iota_0$ to~$\iota_1$ such that, for all~$u$, $\iota(F_0)\mapsto\iota_u(F_0)$ is a geometric finger move
such that the leaves of~$\mathscr F_{\iota(F_0)}$ broken up by this move intersect
a portion of~$\gamma$ that is lengthening when~$u$ grows. This is illustrated in Figure~\ref{moving-saddles-fig}.
\begin{figure}[ht]
\includegraphics[scale=1.4]{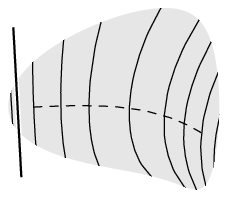}\put(-105,60){$\mathbb S^1_{\tau=0}$}
\put(-60,0){$\iota(F_0)$}\put(-48,38){$\gamma$}
\hskip1cm\raisebox{35pt}{$\longrightarrow$}\hskip1cm
\includegraphics[scale=1.4]{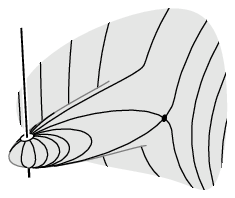}\put(-105,60){$\mathbb S^1_{\tau=0}$}
\put(-60,0){$\iota_0(F_0)$}
\hskip1cm\raisebox{35pt}{$\longrightarrow$}\hskip1cm
\includegraphics[scale=1.4]{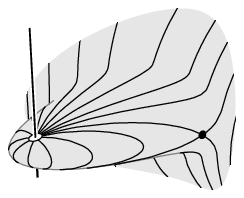}\put(-106,60){$\mathbb S^1_{\tau=0}$}
\put(-60,0){$\iota_1(F_0)$}
\caption{Realizing a geometric finger move}\label{moving-saddles-fig}
\end{figure}

During this isotopy, the foliation~$\mathscr F_{\iota_u}$ will remain generic
except at finitely many moments when a collision of one of the two saddles
created by the geometric finger move~$\iota\mapsto\iota_u$
with another singularity of~$\mathscr F_{\iota_u}$ occurs.
For each of these collision, Case~3, Case~4, or Special subcase of Case~2 considered above
takes place, whereas the effect of the isotopy between the collisions
amounts to a rescaling. Hence, there is a decomposition of
the transformation~$\Psi_{\iota_0}\xmapsto{\iota_1\circ\iota_0^{-1}}\Psi_{\iota_1}$
into a sequence of allowed moves of movie
diagrams.

\medskip
Thus, we have shown that there is a decomposition of~$\Psi_{\Pi}\xmapsto{[\phi_1]}\Psi_{\Pi'}$ into
a sequence of moves of movie diagrams introduced by Definitions~\ref{finger-def}--\ref{rescaling-def},
which completes Step~2.

\medskip\noindent\emph{Step 3}: Represent the evolution of the movie diagram by
basic moves of rectangular diagrams of surfaces.

It follows from Proposition~\ref{representing-movie-prop} and Lemma~\ref{decomp-of-movie-moves-lem} that a sequence of allowed moves of movie diagrams
can be converted into a sequence of basic moves of rectangular diagrams of surfaces
not including stabilizations and horizontal half-wrinkle moves and preserving~$\partial\Pi$.
So, a sequence of such moves produces~$\Pi'$ from~$\Pi$
and induces the morphism~$[\phi_1]$ from~$\widehat\Pi$ to~$\widehat\Pi'$.

\medskip\noindent\emph{Step 4}: Exclude vertical half-wrinkle moves from the constructed sequence of basic moves.

Let
$$\Pi=\Pi_0\mapsto\Pi_1\mapsto\Pi_2\mapsto\ldots\mapsto\Pi_N=\Pi'$$
be the sequence of basic moves obtained at the previous step. By construction, the boundaries of all~$\Pi_j$
are the same, but the type of each boundary vertex (`$\diagup$' or~`$\diagdown$') may vary.
Observe that the type of a boundary vertex changes under the move~$\Pi_j\mapsto\Pi_{j+1}$
only if this move is a vertical half-wrinkle move (since horizontal half-wrinkle moves are not involved),
which changes simultaneously the types of two boundary vertices forming a vertical edge.

Pick~$\delta>0$ smaller than half the distance between any two distinct~$\theta'$, $\theta''$ such that~$m_{\theta'}$
and~$m_{\theta''}$ are occupied meridians of~$\Pi_{j'}$, $\Pi_{j''}$, respectively,
for some~$j',j''\in\{0,1,\ldots,N\}$.

For each~$j=0,1,2,\ldots,N$ we construct a rectangular diagram of a surface~$\Pi_j'$ as follows.
If the types of any~$v\in\partial\Pi$ is the same in~$\Pi_j$ and~$\Pi$, we put~$\Pi_j'=\Pi_j$.
In particular, $\Pi_0'=\Pi_0=\Pi$ and~$\Pi_N'=\Pi_N=\Pi'$.

Suppose there is a vertical edge~$\{v_1,v_2\}\in\partial\Pi$ such that the types of~$v_1,v_2$
in~$\Pi$ and~$\Pi_j$ disagree. Apply a vertical half-wrinkle creation move to~$\Pi_j$
so that:
\begin{enumerate}
\item
the boundary of the diagram is preserved;
\item
the types of~$v_1$, $v_2$ change to the opposite;
\item
the new occupied meridian is at distance~$\delta$ from the meridian containing~$v_1$ and~$v_2$.
\end{enumerate}

Repeat this procedure with the new diagram in place of~$\Pi_j$ until all boundary vertices have
the same type in it as they have in~$\Pi$. Let~$\Pi_j'$ be the obtained diagram.

One can see the following:
\begin{enumerate}
\item
if~$\Pi_j\mapsto\Pi_{j+1}$ is a vertical half-wrinkle creation (respectively, reduction) move, then~$\Pi_j'\mapsto\Pi_{j+1}'$
is a vertical wrinkle creation (respectively, reduction) move or a rescaling;
\item
if~$\Pi_j\mapsto\Pi_{j+1}$ is not a vertical half-wrinkle move, then~$\Pi_j'\mapsto\Pi_{j+1}'$
is a basic move of the same kind as~$\Pi_j\mapsto\Pi_{j+1}$;
\item
all the moves~$\Pi_j'\mapsto\Pi_{j+1}'$, $j=0,1,\ldots,N-1$, are fixed on~$\partial\Pi$;
\item
for all~$j=0,1,\ldots,N$, we have~$\Psi_{\Pi_j'}=\Psi_{\Pi_j}$, which implies that
the morphism from~$\Pi$ to~$\Pi'$ induced by the sequence of moves
$$\Pi=\Pi_0'\mapsto\Pi_1'\mapsto\Pi_2'\mapsto\ldots\mapsto\Pi_N'=\Pi'$$
is still~$[\phi_1]$.\qedhere
\end{enumerate}
\end{proof}

\section{Modifying the boundary}\label{boundary-sec}

\begin{lemm}\label{delete-rect-lem}
Let~$\Pi$ be a rectangular diagram of a surface, and let~$r\in\Pi$ be a rectangle having
exactly one, two, or three consecutive
vertices at~$\partial\Pi$ and such that~$\Pi'=\Pi\setminus\{r\}$ is also a rectangular diagram of a surface.
Let also~$\phi:(\mathbb S^3,\widehat\Pi')\rightarrow(\mathbb S^3,\widehat\Pi)$ be a homeomorphism isotopic
to the identity \emph(in the class of maps~$(\mathbb S^3,\widehat\Pi')\rightarrow(\mathbb S^3,\widehat\Pi)$\emph). Then the transformation~$\Pi\mapsto\Pi'$
assigned with the morphism~$[\phi]$
can be decomposed into basic moves fixed on the boundary components disjoint from~$V(r)$.
\end{lemm}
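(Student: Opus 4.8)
Write~$v_1,v_2,v_3,v_4$ for the vertices of~$r$ in cyclic order and let~$j\in\{1,2,3\}$ be the number of them that belong to~$\partial\Pi$; by hypothesis these are consecutive, say~$v_1,\dots,v_j$. By the compatibility condition, a vertex~$v$ of~$r$ with~$v\notin\partial\Pi$ is a vertex of exactly one further rectangle~$s\in\Pi$, which either meets~$r$ only at~$v$ or lies back to back with~$r$ across the occupied level through~$v$; in either case~$s$ is a rectangle of~$\Pi'$ for which~$v$ is a free vertex. Hence the passage~$\widehat\Pi\to\widehat{\Pi'}$ removes the open disc~$\interior\widehat r$, which is attached to~$\widehat{\Pi'}$ along the connected arc~$P=\widehat{v_{j+1}}\cup\dots\cup\widehat{v_4}\subset\partial\widehat r$, while~$\widehat{v_1}\cup\dots\cup\widehat{v_j}$ is a sub-arc of~$\partial\widehat\Pi$ disjoint from every boundary component of~$\widehat\Pi$ that does not meet~$V(r)$. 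In particular~$\widehat\Pi$ is~$\widehat{\Pi'}$ with a disc attached along a boundary arc, so it is ambient isotopic to~$\widehat{\Pi'}$, in agreement with the hypothesis that~$\phi$ is isotopic to the identity; moreover, once~$\widehat r$ has been isolated inside a ball~$B$ meeting~$\widehat{\Pi'}$ in a disc or half-disc and disjoint from the other boundary components, any self-homeomorphism of~$\mathbb S^3$ supported in~$B$ and carrying~$\widehat{\Pi'}$ onto~$\widehat\Pi$ represents~$[\phi]$. The proof has two stages: first bring~$r$ into a standard position by localized basic moves, then carry out the deletion.

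\emph{Normalization.} Using rescalings (which, by Case~5 of the proof of Lemma~\ref{decomp-of-movie-moves-lem}, decompose into a wrinkle creation followed by a wrinkle reduction), exchange moves (Definition~\ref{exchange-def}), and bubble moves (decomposed into basic moves in Section~\ref{bubble-sec}), all supported in an arbitrarily small neighbourhood of~$\widehat r$ together with the discs over the rectangles that meet~$r$, one arranges that~$r$ is a thin rectangle whose two occupied meridians and two occupied longitudes are adjacent, among the occupied levels of~$\Pi$, to those of the rectangles meeting~$r$, and whose free vertices lie on occupied levels met by no rectangle of~$\Pi$ other than~$r$. Since each of these moves is supported away from the boundary components disjoint from~$V(r)$, it is fixed on those components in the sense of Definition~\ref{bound-fixed-def}.

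\emph{Deletion.} Suppose first~$j=2$. Then the two shared vertices of~$r$ form one edge~$e'$ of~$r$ and the two free vertices form the opposite edge~$e$; by the normalization,~$\Pi$ is obtained from~$\Pi'$ (possibly after a final rescaling) precisely by a half-wrinkle creation move (Definition~\ref{half-wrinkle-def}) centred on the occupied level of~$e'$ and having~$r$ as its added rectangle, vertical if~$e'$ is a meridian and horizontal if~$e'$ is a longitude; thus~$\Pi\mapsto\Pi'$ is a half-wrinkle reduction. The cases~$j=3$ and~$j=1$ are reduced to~$j=2$ by auxiliary basic moves: for~$j=3$ one performs a half-wrinkle creation along an edge of~$r$ bearing two free vertices and then uses a short sequence of exchange moves, flypes (Definition~\ref{flype-def}), and bubble moves to put the configuration into a form in which~$r$ has exactly two free vertices and can be deleted as above, the auxiliary rectangle being removed afterwards by a half-wrinkle reduction; the case~$j=1$ is reduced similarly, by a more elaborate sequence of creation moves near the rectangles meeting~$r$ followed by the corresponding reductions. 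In every case each move of the resulting sequence is localized near~$\widehat r$ and its neighbours, hence fixed on the boundary components of~$\widehat\Pi$ disjoint from~$V(r)$, and the composition of the assigned morphisms is represented by a self-homeomorphism of~$\mathbb S^3$ supported in a ball disjoint from those components and carrying~$\widehat\Pi$ onto~$\widehat{\Pi'}$, hence equals~$[\phi]$.

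The technical core of the argument is the reduction of the cases~$j=1$ and~$j=3$ to~$j=2$: one must verify, case by case through the configurations permitted by the compatibility of rectangles and by the bound on the number of free vertices on each occupied level, that the auxiliary rectangles can indeed be introduced and removed by genuine basic moves, and that no boundary component other than those meeting~$V(r)$ is disturbed.
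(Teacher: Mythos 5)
Your reduction of the two-free-vertex case is essentially the paper's Case 1 (an exchange move to make $r$ thin, followed by a half-wrinkle reduction), and your observation that all moves can be localized near $\widehat r$ so that the other boundary components are untouched is correct. But the other two cases are not proved, and your sketch for one of them cannot be completed as stated.

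Concretely: when $r$ has three free vertices, passing from $\Pi$ to $\Pi'=\Pi\setminus\{r\}$ removes three free vertices and creates one (the formerly shared vertex of $r$), so $|\partial\Pi'|=|\partial\Pi|-2$. Every move you invoke for this case --- half-wrinkle creations/reductions, exchange moves, flypes, and bubble moves --- preserves the number of free vertices of the diagram, so no sequence of them can realize $\Pi\mapsto\Pi'$. A destabilization is unavoidable here, and indeed the paper's decomposition for this case is: an exchange move, then a half-wrinkle creation (producing a companion rectangle next to $r$), then a destabilization removing both. Symmetrically, when $r$ has a single free vertex the count goes up by $2$, so stabilization-type additions are forced; the paper handles this by adding four auxiliary rectangles one at a time (each addition being the inverse of the three-free-vertex case), performing a flype, and then deleting five rectangles by the two previously settled cases. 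Your closing paragraph explicitly defers exactly this case analysis (``one must verify, case by case\dots''), which is the entire content of the lemma beyond the easy $j=2$ case; as written, the proposal therefore has a genuine gap, and for $j=3$ the proposed move set is provably insufficient.
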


\begin{proof}
There are the following three cases to consider.

\medskip\noindent\emph{Case 1}: $r$ shares two common vertices with the other rectangles in~$\Pi$.

The sought for decomposition is shown in Figure~\ref{remove-rect-2-fig}. First, we apply an exchange move
that replaces~$r$ with a `thin' rectangle, and then remove it by a half-wrinkle reduction move.
\begin{figure}[ht]
\includegraphics{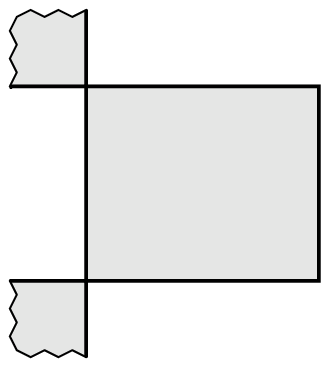}
\hskip.5cm\raisebox{50pt}{$\longrightarrow$}\hskip.5cm
\includegraphics{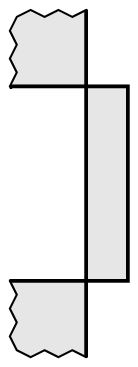}
\hskip.5cm\raisebox{50pt}{$\longrightarrow$}\hskip.5cm
\includegraphics{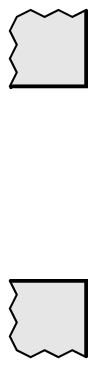}
\caption{Removing a rectangle having two common
vertices with other rectangles}\label{remove-rect-2-fig}
\end{figure}

\medskip\noindent\emph{Case 2}: $r$ shares a single common vertex with the other rectangles in~$\Pi$.

The decomposition is shown in Figure~\ref{remove-rect-1-fig}. First, we apply
an exchange move, then a half-wrinkle creation move, and finally, a destabilization move.
\begin{figure}[ht]
\includegraphics{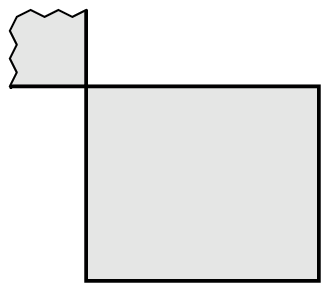}
\hskip.5cm\raisebox{50pt}{$\longrightarrow$}\hskip.5cm
\includegraphics{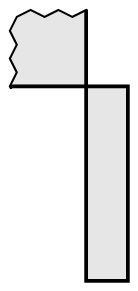}
\hskip.5cm\raisebox{50pt}{$\longrightarrow$}\hskip.5cm
\includegraphics{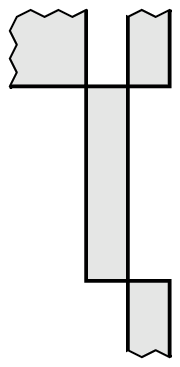}
\hskip.5cm\raisebox{50pt}{$\longrightarrow$}\hskip.5cm
\includegraphics{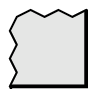}
\caption{Removing a rectangle having a single common
vertex with other rectangles}\label{remove-rect-1-fig}
\end{figure}

\medskip\noindent\emph{Case 3}: $r$ shares three common vertices with the other rectangles in~$\Pi$.

We use the two previously resolved cases. First, we add, one by one, four new rectangles as
shown in Figure~\ref{remove-rect-3-fig}, each sharing a single vertex with others
at the moment of the addition. Then we apply a flype, and then remove five rectangles,
each sharing either one or two vertices with others at the moment of the removal.
\begin{figure}[ht]
\begin{tabular}{ccc}
\includegraphics{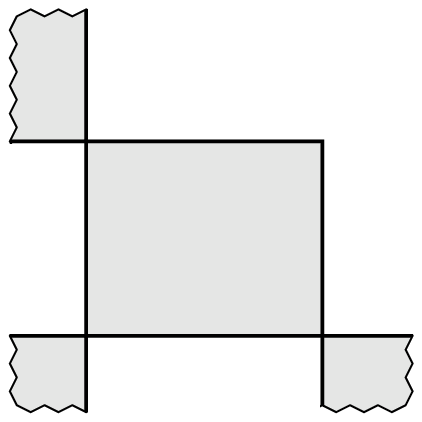}
&\hskip.5cm\raisebox{50pt}{$\longrightarrow$}\hskip.5cm&
\includegraphics{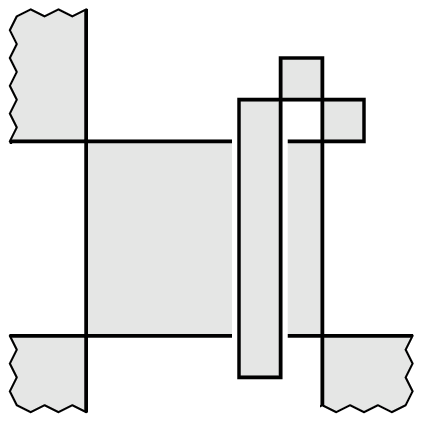}\\[3mm]
&&$\bigl\downarrow$\\[3mm]
\includegraphics{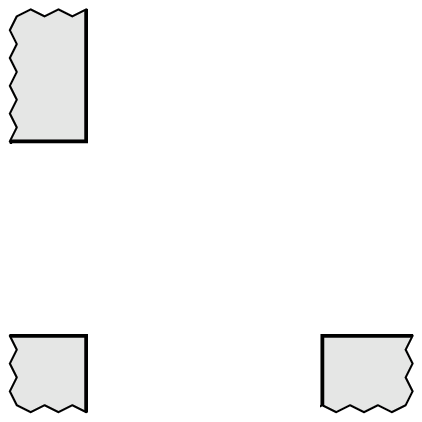}
&\hskip.5cm\raisebox{50pt}{$\longleftarrow$}\hskip.5cm&
\includegraphics{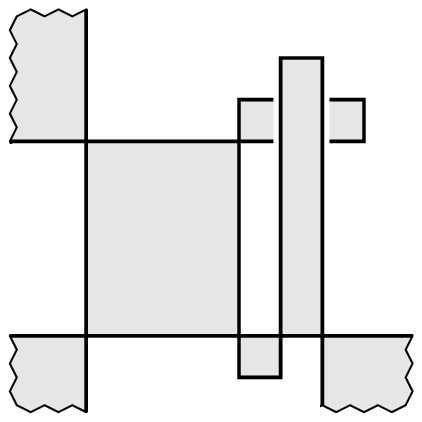}
\end{tabular}
\caption{Removing a rectangle having three common
vertices with other rectangles}\label{remove-rect-3-fig}
\end{figure}
\end{proof}

\begin{lemm}\label{homot-eq-lem}
Let~$\Pi$ and~$\Pi'$ be rectangular diagrams of surfaces such that~$\Pi\subset\Pi'$
and the inclusion~$\widehat\Pi\hookrightarrow\widehat\Pi'$ is a homotopy equivalence.
Then there exists a sequence of basic moves
\begin{equation}\label{bbm-seq-eq}
\Pi=\Pi_0\mapsto\Pi_1\mapsto\Pi_2\mapsto\ldots\mapsto\Pi_N=\Pi'
\end{equation}
such that:
\begin{enumerate}
\item
each move~$\Pi_i\mapsto\Pi_{i+1}$ is fixed on the common components of~$\partial\Pi$ and~$\partial\Pi'$;
\item
the composition of moves~\eqref{bbm-seq-eq} induces the morphism~$\widehat\Pi\rightarrow\widehat\Pi'$
that is represented by any homeomorphism~$\phi:(\mathbb S^3,\widehat\Pi)\rightarrow(\mathbb S^3,\widehat\Pi')$
isotopic to the identity in the class of maps~$(\mathbb S^3,\widehat\Pi)\rightarrow(\mathbb S^3,\widehat\Pi')$.
\end{enumerate}
\end{lemm}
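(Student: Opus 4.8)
The plan is to pass from $\widehat{\Pi'}$ back to $\widehat\Pi$ by peeling off the rectangles of $\Pi'\setminus\Pi$ one at a time, each peeling being supplied by Lemma~\ref{delete-rect-lem}. Formally one argues by induction on $n=|\Pi'\setminus\Pi|$. If $n=0$ then $\widehat\Pi=\widehat{\Pi'}$, the empty sequence of moves induces $[\mathrm{id}]$, and this equals $[\phi]$ since $\phi$ is isotopic to the identity. For the inductive step it suffices to produce a single rectangle $r\in\Pi'\setminus\Pi$ such that (a) $\Pi''=\Pi'\setminus\{r\}$ is again a rectangular diagram of a surface, (b) $r$ has exactly one, two, or three consecutive vertices in $\partial\Pi'$, so that Lemma~\ref{delete-rect-lem} applies to the passage $\Pi'\mapsto\Pi''$, and (c) the inclusion $\widehat{\Pi''}\hookrightarrow\widehat{\Pi'}$ is a homotopy equivalence. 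Granting such an $r$: since $\widehat\Pi\subset\widehat{\Pi''}\subset\widehat{\Pi'}$ and both $\widehat\Pi\hookrightarrow\widehat{\Pi'}$ and $\widehat{\Pi''}\hookrightarrow\widehat{\Pi'}$ are homotopy equivalences, the two-out-of-three property gives that $\widehat\Pi\hookrightarrow\widehat{\Pi''}$ is a homotopy equivalence; the induction hypothesis then yields a sequence of basic moves from $\Pi$ to $\Pi''$ with the required properties, and Lemma~\ref{delete-rect-lem}, read in the direction $\Pi''\mapsto\Pi'$, realizes the last step. Concatenating the two sequences finishes the step.

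To find $r$, consider the ``new part'' $D=\overline{\widehat{\Pi'}\setminus\widehat\Pi}$, a subsurface with corners tiled by the discs $\widehat r$, $r\in\Pi'\setminus\Pi$, and meeting $\widehat\Pi$ along a $1$-complex. Because $\widehat\Pi\hookrightarrow\widehat{\Pi'}$ is a homotopy equivalence, $D$ cannot be attached to $\widehat\Pi$ in a topologically nontrivial way: attaching a component of $D$ along two or more disjoint arcs, or along a whole circle, or as a closed-off disc, would change $H_1$, the number of connected components, or the Euler characteristic. Hence every connected component of $D$ is a disc (or half-disc) glued to $\widehat\Pi$ along a single arc. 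A disc tiled by rectangles always contains an \emph{ear}, i.e.\ a tile $\widehat r$ whose intersection with the closure of its complement in $\widehat{\Pi'}$ is a single arc formed by one, two, or three of the four boundary arcs of $\widehat r$; equivalently, the vertices of $r$ lying in $\partial\Pi'$ are consecutive and there are one, two, or three of them. One checks that such an ear (chosen, if necessary, after a preliminary sequence of exchange and bubble moves that spreads the relevant occupied levels apart) can be removed so that $\Pi'\setminus\{r\}$ is still a rectangular diagram of a surface and the corresponding $D$ is still a disjoint union of discs attached along single arcs; this yields (a)--(c).

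It remains to check the boundary bookkeeping and the morphism. If $c$ is a connected component common to $\partial\Pi$ and $\partial\Pi'$, then every vertex of $c$ is free in both $\Pi$ and $\Pi'$, hence cannot be a vertex of any rectangle of $\Pi'\setminus\Pi$ (otherwise it would belong to at least two rectangles of $\Pi'$); in particular $c\cap V(r)=\varnothing$, so the moves supplied by Lemma~\ref{delete-rect-lem} are fixed on $c$, and $c$ is also a common component of $\partial\Pi$ and $\partial\Pi''$, so the inductive moves are fixed on it too. For the morphism: each basic move produced along the way carries a morphism represented by a homeomorphism isotopic to the identity in the relevant class of maps of pairs, and a composite of such morphisms is again of this type; since attaching the discs $\widehat r$, $r\in\Pi'\setminus\Pi$, to $\widehat\Pi$ along arcs determines the pair $(\mathbb S^3,\widehat{\Pi'})$ up to a homeomorphism isotopic to the identity and compatible with the peelings, the resulting composite morphism is exactly the one in the statement.

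The main obstacle is the combinatorial--topological claim underlying the choice of $r$: that the homotopy-equivalence hypothesis forces every component of $D$ to be a disc attached to $\widehat\Pi$ along a single arc, and that within such a tiled disc one can always locate an ear whose removal keeps the diagram a rectangular diagram of a surface — in particular does not create a meridian or longitude carrying three free vertices. Ensuring the latter is the delicate point, and it may force one to insert auxiliary basic moves before a given rectangle becomes removable.
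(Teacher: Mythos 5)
Your proposal follows essentially the same route as the paper, whose entire proof of this lemma is the single sentence that the claim follows from Lemma~\ref{delete-rect-lem} by induction on the number of rectangles in~$\Pi'\setminus\Pi$. Your expansion of that induction (locating an ear in the disc components of $\overline{\widehat{\Pi'}\setminus\widehat\Pi}$, the two-out-of-three argument, and the observation that common boundary components are disjoint from $V(r)$) is sound, and the delicate point you flag at the end is real but is not addressed by the paper either.
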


\begin{proof}
The claim follows from Lemma~\ref{delete-rect-lem} by induction in the number of rectangles in~$\Pi'\setminus\Pi$.
\end{proof}

\begin{proof}[Proof of Theorem~\ref{main-theo}]
The implication~$\text{(ii)}\Rightarrow\text{(i)}$ is easy and left to the reader. We will prove the
inverse one.
So, suppose that~(i) holds true.

It suffices to prove the theorem in the case when~$\partial\widehat\Pi\cap\partial\widehat\Pi'=L$. Indeed,
by a small perturbation of~$\Pi'$ we can always obtain a diagram~$\Pi''$ such
that~$\partial\widehat\Pi\cap\partial\widehat\Pi''=\partial\widehat\Pi'\cap\partial\widehat\Pi''=L$.
Then the passage~$\Pi\mapsto\Pi'$
will decompose in the following two:~$\Pi\mapsto\Pi''\mapsto\Pi'$
with the former assigned a morphism~$[\phi']$ with~$\phi'$ close to~$\phi$,
and the latter the morphism~$[\phi\circ{\phi'}^{-1}]$. An application of the theorem
to these transformations yields the general case.

So, from now on, we assume~$\partial\widehat\Pi\cap\partial\widehat\Pi'=L$.

Let~$\{\phi_t\}_{t\in[0;1]}$ be an isotopy from~$\mathrm{id}|_{\mathbb S^3}$ to~$\phi$
fixed on~$L$ and preserving the tangent plane to~$\widehat\Pi$ along~$L$.
For selected moments~$t_0,t_1,\ldots,t_N\in[0,1]$ we denote~$\phi_{t_i}(\widehat\Pi)$ by~$F_i$.

The isotopy~$\{\phi_t\}_{t\in[0;1]}$ can be perturbed slightly, and moments~$t_0=0<t_1<t_2<\ldots<t_{N-1}<t_N=1$
can be chosen so that, for all~$i=1,\ldots,N$, one of the following holds:
\begin{enumerate}
\item
we have~$\partial F_{i-1}=\partial F_i$, and the tangent plane to~$F_{i-1}$ at any point~$p\in\partial F_{i-1}$
is preserved during the isotopy~$\{\phi_t\}_{t\in[t_{i-1};t_i]}$;
\item
we have~$\phi_t(F_0)\subset F_i$ for all~$t\in[t_{i-1};t_i]$, and each connected component of~$\partial F_i$ is either
contained in~$\partial F_{i-1}$ or disjoint from~$\bigcup_{j=0}^{i-1}\partial F_j$;
\item
we have~$\phi_t(F_0)\subset F_{i-1}$ for all~$t\in[t_{i-1};t_i]$, and each connected component of~$\partial F_i$ is either
contained in~$\partial F_{i-1}$ or disjoint from~$\bigcup_{j=0}^{i-1}\partial F_j$.
\end{enumerate}

Denote by~$L_*$ the link~$\bigcup_{i=0}^N\partial F_i$. For each~$i=0,\ldots,N$ denote also
by~$L_i$ the union of connected components of~$L_*$ that are contained by whole in~$F_i$.

It follows from~\cite[Lemma~2]{dp17} and a slight generalization of~\cite[Theorem~1]{dp17} that
there exist a tubular neighborhood~$U$ of~$L$, a homeomorphism~$\psi:\mathbb S^3\rightarrow\mathbb S^3$
identical on~$U$, a rectangular diagram of a link~$R_*$, and rectangular diagrams of surfaces~$\Pi_0,\Pi_1,\ldots\Pi_{N-1},\Pi_N$ such that the following holds:
\begin{enumerate}
\item
$\psi(L_*)=\widehat{R_*}$;
\item
for each~$i=0,1,\ldots,N$, there is an isotopy from~$\psi(F_i)$ to~$\widehat\Pi_i$
fixed on~$\psi(L_i)$ and preserving the tangent plane to~$\psi(F_i)$ at
any~$p\in\psi(L_i)$.
\end{enumerate}

The generalization of~\cite[Theorem~1]{dp17} needed here consists in requesting that a given link
contained in the surface~$F$ also becomes `rectangular' in a rectangular presentation
of~$F$. This does not require any essential change of the proof. Indeed, Proposition~5 in~\cite{dp17},
which is the key ingredient, treats even more general case which allows
to make `rectangular' any graph embedded in~$F$.

Now each passage~$\Pi_{i-1}\mapsto\Pi_i$, $i=1,\ldots,N$, as well as~$\Pi\mapsto\Pi_0$ and~$\Pi_N\mapsto\Pi'$
(assigned with the morphism obtained naturally from the construction) decompose into basic moves
either by Proposition~\ref{fixed-b-prop} or Lemma~\ref{homot-eq-lem}.
This completes the proof of the theorem.
\end{proof}

\end{document}